\documentclass[11pt,a4paper]{amsart}
\usepackage[utf8]{inputenc}
\usepackage{amsmath}
\usepackage{amsfonts}
\usepackage{amssymb}
\usepackage{amsthm}
\usepackage{setspace}
\usepackage{hyperref}
\usepackage{graphicx}
\usepackage{stmaryrd}
\usepackage[shortlabels]{enumitem}
\usepackage{mathtools}
\usepackage{subcaption}
\usepackage[T1]{fontenc}
\usepackage{xcolor}
\usepackage{float}
\usepackage{tikz-cd}
\usepackage{tikz}
\usepackage[noadjust]{cite}
\usepackage{cite}
\usepackage{ragged2e}
\usepackage[margin=1.2in]{geometry}
\usepackage{cleveref}
\graphicspath{ {./images/} }
\newtheorem{theorem}{Theorem}[section]
\newtheorem{proposition}[theorem]{Proposition}
\newtheorem{lemma}[theorem]{Lemma}
\newtheorem*{mtheorem}{Main Theorem}
\newtheorem*{corollaryA}{Corollary A}
\newtheorem*{corollaryB}{Corollary B}
\newtheorem*{corollaryC}{Corollary C}
\newtheorem*{corollaryD}{Corollary D}

\usepackage{hyperref}

\usepackage[scr=boondox,scrscaled=1.05]{mathalfa}

\newtheorem{corollary}[theorem]{Corollary}

\newtheorem{case}[theorem]{Case}

\theoremstyle{definition}
\newtheorem{claim}[theorem]{Claim}
\newtheorem{definition}[theorem]{Definition}
\newtheorem{question}[theorem]{Question}
\newtheorem{problem}[theorem]{Problem}

\theoremstyle{remark}
\newtheorem{remark}[theorem]{Remark}
\newtheorem{example}[theorem]{Example}

\newcommand*\sg[1]{\{ #1 \}}

 \newcommand{\Lk}{\mathrm{Lk}}
 \newcommand{\Th}{\mathrm{Th}}

\graphicspath{{pictures/}}

\title{Fit systolic groups, exactly}

\author{Martín Blufstein}
\address[M.~Blufstein]{Departamento de Matemática-IMAS (CONICET), FCEyN, Universidad de Buenos Aires, Buenos Aires, Argentina}
\email{mblufstein@dm.uba.ar}

\author{Victor Chepoi}
\address[V.~Chepoi]{Aix-Marseille Universit\'e and CNRS, LIS, Marseille, France}
\email{victor.chepoi@lis-lab.fr}

\author{Huaitao Gui}
\address[H.~Gui]{Institut for Matematiske Fag, University of Copenhagen, 2100 Copenhagen, Denmark}
\email{hg@math.ku.dk}

\author{Damian Osajda}
\address[D.~Osajda]{Institut for Matematiske Fag, University of Copenhagen, 2100 Copenhagen, Denmark}
\address{Instytut Matematyczny,
	Uniwersytet Wroc\l awski\\
	pl.\ Grun\-wal\-dzki 2/4,
	50--384 Wroc\-{\l}aw, Poland}
\email{dosaj@math.uni.wroc.pl}

\begin{document}

\begin{abstract}
A systolic complex/bridged graph is \emph{fit} when its (metric) intervals are ``not too large''. We prove that uniformly locally finite fit systolic complexes have Yu's Property A. In particular, groups acting properly on such complexes have Property A, (equivalently) they
		are exact, 
        and (equivalently) they are boundary amenable.
		As applications we show that groups from a class containing all large-type Artin groups, as well as all finitely presented graphical $C(3)$--$T(6)$ small cancellation groups, and finitely presented classical $C(6)$ small cancellation groups are exact. We also
provide further examples.
Our proof relies on a combinatorial criterion for Property~A
due to \v{S}pakula and Wright.
\end{abstract}

\maketitle

\section{Introduction}

    \subsection{Main results}
	Embedding, even just in a coarse way, a metric space into a Hilbert space allows one to draw strong conclusions about the former space using the geometry of the latter one. This applies also to (countable) groups equipped with a suitable metric.	In particular, the coarse Baum-Connes conjecture holds for spaces embeddable coarsely into a Hilbert space \cite{Yu2000}. In the case of countable groups the coarse Baum-Connes conjecture implies e.g.\ the strong Novikov conjecture \cite{Roe96,Hig2000,STY2002}.  
	
	Guoliang Yu \cite{Yu2000} introduced a coarse geometric property---\emph{Property A}---as a criterion for coarse embeddability into a Hilbert space. This property can be seen as a ``non-equivariant'', ``coarse'', or ``weak'' variant of amenability. And, similarly as for amenability, there exist numerous equivalent definitions of various flavours---see Definition~\ref{d:A} for one. Not surprisingly, Yu's Property A became of great interest on its own and has been intensively studied over the last decades; see \cite{BrOz,Willett2009,NoYu2023} for surveys. For a group $H$ seen as a metric space when equipped with the word metric with respect to a finite generating set, the following properties are equivalent \cite{HiRo2000,Oz2000,GuKa2002}:
	\begin{itemize}
		  \item $H$ has Yu's  Property A;
		  \item $H$ is \emph{exact}, that is, the functor $C^\ast_r(H, \square)$ is exact;
		  \item $H$ is \emph{boundary amenable}, that is, $G$ admits an amenable action on a nonempty compact space. 
	\end{itemize}
    These are only three different characterizations, from three different perspectives, respectively: geometric, analytic, and dynamical. See Section~\ref{sec:A} for further characterizations of Property A for groups. 
	At the moment there exist only two constructions of finitely generated groups without Yu's Property A: Gromov random monster groups \cite{Gr2003}, and graphical small cancellation groups constructed in \cite{Osa-scl}. Basically, all other groups appearing in the literature are believed to have Property A. However, this has been proven only for a few classes of groups, among them: (Gromov) hyperbolic groups \cite{Ada1994}, 
    Coxeter groups \cite{DraJan}, one relator groups \cite{Gue2002}, linear groups \cite{GHW2005}, relatively hyperbolic groups (with appropriate parabolics) \cite{Oza2006}, CAT(0) cubical groups \cite{CaNi2005,BCGNW,GuNi2011}), mapping class groups of finite-type surfaces \cite{Kida2006,Ham2009}, some isometry groups of buildings \cite{DymSch,Cam2009,Lec2010}, groups acting geometrically on $2$-dimensional systolic complexes \cite{HuOs_systolic}, outer automorphism groups of free groups \cite{BGH2022}, and $2$-dimensional Artin groups of hyperbolic type \cite{HoHu2021}.   
	For example, it is still unknown whether groups acting geometrically on CAT(0) spaces have Property A, or are coarsely embeddable into Hilbert spaces---see some related questions in Section~\ref{sec:final}.

\medskip
	
In this article we establish Property A for a broad class of systolic complexes by introducing the notion of fitness. \emph{Systolic} complexes are flag simplicial complexes whose $1$-skeleta are \emph{bridged} graphs, that is, graphs in which isometric cycles have length $3$---see details in Section~\ref{sec:premgr}. Bridged graphs were first introduced by Soltan and Chepoi \cite{SoCh}, and Farber and Jamison \cite{FaJa}. Systolic complexes were first studied in \cite{Ch_CAT}, and they were rediscovered by Januszkiewicz and \'Swi{\c a}tkowski \cite{JaSw}, and Haglund \cite{Hag2003} within the framework of Geometric Group Theory. A group is \emph{systolic} if it acts \emph{geometrically}, that is, properly and cocompactly, on a systolic complex. Systolicity is a form of combinatorial nonpositive curvature, and providing a systolic structure for a group allows one to grasp a fair control over its geometry. 
The theory has been successfully used for constructing new exotic examples of groups \cite{JS2003,JaSw}, and for equipping ``classical'' groups with a nonpositive curvature structure, thus allowing to prove new results about them \cite{HoOs, OsPry2018, FukOg2020}. The following main result of the current article extends this latter line of research.
We introduce the subclass of \emph{fit systolic complexes}---those where level sets of intervals are quasi-segments (see Section~\ref{sec:fit} for details).

\begin{mtheorem}\label{thm:property_A}
    Uniformly locally finite fit systolic complexes have Yu's Property A.
\end{mtheorem}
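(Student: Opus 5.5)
We will use the combinatorial criterion of Špakula and Wright, quoted in the abstract. In one standard form it says the following. Let $X$ be a uniformly locally finite graph. Suppose that for every basepoint (or point at infinity) $o$ and every vertex $x$ there is a finite set $S_k(x)$ inside the ball of radius $k$ around $x$, lying roughly in the direction of $o$, such that two conditions hold: the cardinalities $|S_k(x)|$ grow at most polynomially (or subexponentially) in $k$, and for adjacent $x,y$ the symmetric differences $|S_k(x)\,\triangle\, S_k(y)|/|S_k(x)|$ tend to $0$ uniformly. Then $X$ has Property A. Our plan is to build such sets from the \emph{intervals} of the bridged graph, namely the $1$-skeleton of the systolic complex.

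First fix a base vertex $o$. For a vertex $x$ with $d(o,x)=n\ge k$, let $I(x,o)$ be the interval and let $L_j(x)$ be its level set at distance $j$ from $x$. Fitness says that each $L_j(x)$ is a quasi-segment, so it has uniformly bounded "width" and is coarsely a path. Uniform local finiteness then bounds $|L_j(x)|$ linearly in $j$ with a uniform constant. Take $S_k(x)=\bigcup_{j\le k} L_j(x)$, or a weighted version such as the normalized sum $\frac1k\sum_{j\le k}\mathbf 1_{L_j(x)}/|L_j(x)|$. With this choice $|S_k(x)|$ is polynomial in $k$.

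The key comparison is for adjacent vertices $x\sim y$. Using standard bridged-graph facts about convexity of balls and projections (Section~\ref{sec:premgr}), we show that $L_j(x)$ and the corresponding level $L_{j'}(y)$, with $j'=j\pm1$ or $j$ according to $d(o,y)-d(o,x)\in\{-1,0,1\}$, are at Hausdorff distance at most $1$. Concretely, we would show that the levels of $I(x,o)$ at distance $n-j$ from $o$ and the levels of $I(y,o)$ at the same distance from $o$ coincide "in the middle" and differ only near their endpoints.

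The main obstacle is turning this Hausdorff closeness into small symmetric differences. Hausdorff distance $1$ alone does not bound $|A\triangle B|$ relative to $|A|$. We will exploit the quasi-segment structure: both levels are coarsely intervals along a path, and they share most of their length except for boundedly many vertices at the ends. This gives $|L_j(x)\triangle L_{j'}(y)|\le C$ for a uniform $C$. When levels are small this is not a relative gain, so we average over $j\le k$ with normalized level measures. Only the shift $j\mapsto j'$ and a bounded discrepancy at each level contribute, so the $\ell^1$ difference of the averaged probability measures is $O(1/k)$. I expect the proof of the bounded end-discrepancy to need a careful use of fitness and the triangle conditions of bridged graphs. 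I would first try to derive it by induction on $j$, comparing successive levels via the fact that in a bridged graph each vertex of $L_{j+1}$ has neighbours in $L_j$ forming a clique-like set.

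Finally, we remove the dependence on the basepoint. Either we take $o$ far away (a limit along a geodesic ray) so that the family is defined for all $x$ with uniform estimates, or we use the version of the Špakula–Wright criterion that allows a basepoint with sets defined for $d(o,x)\ge k$. All constants depend only on the local finiteness bound and the fitness constants, so the criterion applies and yields Property A.
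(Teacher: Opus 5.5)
There is a genuine gap at the step where you go from $|L_j(x)\triangle L_{j'}(y)|\le C$ to an $O(1/k)$ bound. Truncated intervals are not almost invariant, and averaging over levels does not help: a bounded discrepancy on levels of bounded size is a fixed fraction of every term of the average.

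Concretely, take the flat systolic plane: vertex set $\mathbb{Z}^2$, with $v$ adjacent to $v\pm(1,0)$, $v\pm(0,1)$, $v\pm(-1,1)$. It is $2$-dimensional, hence fit. Let $o=(0,0)$, $x=(1,b)$, $y=(0,b+1)$. Then $x\sim y$ and $d(o,x)=d(o,y)=b+1$. However, $I(x,o)=\{0,1\}\times\{0,\dots,b\}$, while $I(y,o)$ is the single geodesic $\{0\}\times\{0,\dots,b+1\}$. For $1\le j\le k\le b$ you get $L_j(x)=\{(0,b+1-j),(1,b-j)\}$ and $L_j(y)=\{(0,b+1-j)\}$. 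So your bounded-discrepancy claim holds with $C=1$, yet with $S_k=\bigcup_{0\le j\le k}L_j$ you have $|S_k(x)\triangle S_k(y)|=k+2$ against $|S_k(x)|=2k+1$. For the averaged normalized level measures, $\|\mu_x-\mu_y\|_1$ does not drop below $1$.

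Relatedly, the version of the \v{S}pakula--Wright criterion you quote is essentially the F\o lner-type definition of Property A; its growth hypothesis plays no role. Your argument therefore amounts to checking almost invariance of truncated intervals directly, and that is false.

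The missing idea is that the actual criterion (Proposition~\ref{prop:criterion}) does not ask for almost invariance. It asks for the sandwich inclusions in condition (2) together with the subexponential bound in condition (3), and the criterion itself converts these into almost invariance.

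The paper gets the inclusions for free by making the sets unions over balls. It defines $S(x,k,\ell)$ as the union, over $y\in B_k(x)$, of the clique of the normal clique path $\gamma_{(x_0,y)}$ at distance $6\ell$ from $y$. Then $B_{k-d(x,y)}(x)\subseteq B_k(y)$ and $B_k(x)\cup B_k(y)\subseteq B_{k+d(x,y)}(x)$ give condition (2) immediately.

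Intervals enter only for counting. Proposition~\ref{p:convergence_of_normal_clique_paths} shows that $\gamma_{(x_0,y)}$ lies in $I(x_0,x)$ except for its last $2d(x,y)\le 6\ell$ cliques. Hence $S(x,k,\ell)$ lies in $2k+1$ consecutive levels of $I(x,x_0)$. Each of these levels has diameter $O(\ell)$, so by fitness and uniform local finiteness it has $O(\ell)$ vertices, which gives a quadratic $p(\ell)$. Your proposal has neither the nesting structure nor this convergence result for normal clique paths, and without them the interval-based sets cannot be made to work.
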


Below we present our main applications of the Main Theorem---to Artin groups (Corollary A), and to small cancellation groups (Corollaries B \&~C). We also present a $3$-dimensional pseudomanifold example (Corollary D). Besides them, the theorem can be used to reprove Property A in other cases: for $2$-dimensional systolic complexes (see Remark~\ref{rem:2dim}) and for few smaller classes of complexes. We believe that the Main Theorem, as well as our general approach (see e.g.\ Remark~\ref{rem:generalfit}) will be useful for other large classes of groups and complexes; cf.\ Section~\ref{sec:final}.

Along the way we prove a result on general bridged graphs/systolic complexes that we believe is of independent interest---Proposition~\ref{p:convergence_of_normal_clique_paths}. We also provide a characterization of fit systolic complexes (Proposition~\ref{prop:equivalent_definition}) and a useful criterion for fitness (Corollary~\ref{c:fit_condition}). We use them in our proofs and we believe that they will be useful in future studies.
    
\subsubsection*{Large-type Artin groups} Artin groups are a family of groups generalizing e.g.\ braid groups, and being intensively studied recently, due to their mysterious nature.
Recall, that for a finite simplicial graph $\Gamma$ with every edge $ab$ between vertices $a,b$ labelled by a positive integer $m_{ab}\geq 2$, the \emph{Artin group} $A_\Gamma$ associated to $\Gamma$ is the group with the presentation
\[
\langle a\in V(\Gamma)\ |\ \underbrace{aba\cdots}_{m_{ab}} = \underbrace{bab \cdots}_{m_{ab}}, \text{ whenever } a \text{ and } b \text{ are connected by an edge in } \Gamma\rangle,
\]
An Artin group $A_\Gamma$ is of \emph{large type} if all the edge labels $m_{ab}$ in its defining graph $\Gamma$ are at least $3$, and is (more generally) of \emph{almost large type} if the graph $\Gamma$ contains no triangle with an edge labelled by $2$ and no square with three edges labelled by $2$.

In general, the geometry of arbitrary Artin groups is still poorly understood despite the fact that all Artin groups are believed to satisfy some notion of nonpositive curvature; see e.g.\ \cite{vdL,GuNi2011,HuOs_systolic,HoHu2021, BlVa} for more information on Artin groups.
Huang and Osajda \cite{HuOs_systolic} equipped almost-large-type Artin groups with a structure of systolic groups. This allows one to draw strong consequences about various features of the groups, and allows us to prove the following.
	
    \begin{corollaryA}
         Almost-large-type Artin groups are exact. 
    \end{corollaryA}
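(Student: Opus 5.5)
The plan is to deduce Corollary A from the Main Theorem, using the systolic structure of Huang and Osajda \cite{HuOs_systolic}. By \cite{HuOs_systolic}, every almost-large-type Artin group $A_\Gamma$ acts geometrically on a systolic complex $X_\Gamma$. In their construction $X_\Gamma$ is obtained from the modified Deligne complex (or a thickening of it): vertices correspond to cosets of spherical parabolic subgroups, and simplices are spanned by chains or by suitable families of them. Since the action is proper and cocompact, $X_\Gamma$ is uniformly locally finite. The Artin group, being quasi-isometric to $X_\Gamma$ via the Švarc–Milnor lemma, then inherits Property A, because Property A is a coarse invariant. By the equivalences recalled in the introduction \cite{HiRo2000,Oz2000,GuKa2002}, Property A for $A_\Gamma$ is equivalent to exactness. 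So everything reduces to one point: $X_\Gamma$ is fit.

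To prove fitness, I will use the fitness criterion Corollary~\ref{c:fit_condition} (or the characterization in Proposition~\ref{prop:equivalent_definition}) rather than the raw definition. Presumably that criterion reduces fitness to a local condition, for instance one on links of vertices, or on how balls and intervals intersect spheres. A typical form would be that the level sets of intervals restricted to links are quasi-segments of bounded width. The key steps, in order, are as follows.
\begin{enumerate}
\item Recall the explicit description of the vertex links in $X_\Gamma$. The link of a vertex of ``rank~2 type'' (a coset $gA_{ab}$) is essentially the coset graph of the dihedral Artin group $A_{ab}$. This is a graph in which the relevant metric is controlled by the labels $m_{ab}\ge 3$, which gives large girth-type properties. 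Links of other vertex types are simpler: trees, or joins.
\item Verify the local hypothesis of the criterion for each vertex type. For the dihedral links I would use the known geometry of the Bass–Serre-type tree structure of $A_{ab}$ modulo its center. Intervals between vertices at distance two or three in the link should then be thin, since geodesics in these graphs are essentially unique up to bounded ambiguity.
\item Conclude from the criterion that $X_\Gamma$ is fit. Then apply the Main Theorem and the reduction above.
\end{enumerate}

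The main obstacle is step (2), checking the fitness criterion on the dihedral-type links. This is where the large-type hypothesis, or more precisely the absence of the forbidden triangles and squares with $2$-labels in the almost-large case, must enter. My first attempt would be to show directly that any two vertices of $X_\Gamma$ at distance $2$ have a common-neighbour set that is a clique or a bounded-diameter quasi-segment. I would do this by analysing the minimal cosets containing both vertices, using the intersection properties of parabolic subgroups in $2$-dimensional Artin groups. If the criterion in Corollary~\ref{c:fit_condition} instead requires a global condition on intervals, I would fall back on the contractible, $2$-dimensional nature of the Deligne complex, much as in Remark~\ref{rem:2dim}, to control level sets of intervals by disc diagrams.
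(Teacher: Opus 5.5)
Your reduction is fine, and it is exactly how the paper derives Corollary A from Proposition~\ref{prop:fit_Artin}: a geometric action on a uniformly locally finite fit systolic complex, then the Main Theorem, then coarse invariance of Property A, then the equivalence with exactness. The gap is the fitness of $X_\Gamma$, which is the whole content, and your plan for it is aimed at the wrong complex.

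The Huang--Osajda complex $X_\Gamma$ is the systolization of the Cayley complex $\tilde{K}_\Gamma$. Each $2m_{st}$-gon is subdivided into a precell with $m_{st}-2$ new interior vertices, zigzag edges are added between precells sharing more than one edge, and one takes the flag completion. It is not the modified Deligne complex. The Deligne complex has infinite vertex stabilisers $gA_Tg^{-1}$ and infinite links; the coset graph of $A_{ab}$ in your step (1) is infinite. So it is neither uniformly locally finite nor acted on properly, which contradicts what you assert a few lines earlier. It is not even systolic: the link of $gA_a$ is the complete bipartite graph between $\{ga^k\}_{k\in\mathbb{Z}}$ and the vertices $gA_{ab}$, which has induced $4$-cycles as soon as $a$ has two neighbours in $\Gamma$. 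Hence steps (1)--(2) analyse links that are not links of $X_\Gamma$. Your fallback via Remark~\ref{rem:2dim} also fails: $X_\Gamma$ is in general not $2$-dimensional (the remark says so explicitly), and the $2$-dimensional Deligne complex is not systolic.

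Even on the correct complex, the condition you propose to check carries no information. For $d(u,v)=2$ in any bridged graph, the set of common neighbours is $L_1(u,v)$, which is always a clique by (INC). This holds equally in non-fit systolic complexes such as $K(54,54,54,54)$, so it cannot yield fitness.

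What you are missing is the actual criterion, Corollary~\ref{c:fit_condition}. If no vertex link contains an induced doubled asteroid (two triangles joined by three disjoint paths of length two), then level sets contain no induced tripods or asteroids, and so are $(1,4)$-quasi-isometric to segments. The reason is that an induced tripod in $L_k(u,v)$, together with the triangles Lemma~\ref{lem:tripod-asteroid} produces in $L_{k-1}(u,v)$ and $L_{k+1}(u,v)$, forms an induced doubled asteroid in the link of the tripod's centre; asteroids are handled similarly.

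The paper then verifies this using the explicit links from Propositions~\ref{p:new_links} and~\ref{p:real_links}:
\begin{itemize}
\item links of new vertices are thick hexagons;
\item links of real vertices are thick hexagons glued along real vertices;
\item triangles occur only in the upper part $U_l\sqcup U_r$ or the lower part $D_l\sqcup D_r$ of a single thick hexagon, with no edges between the two parts;
\item no two such triangles can be joined by three disjoint paths of length two.
\end{itemize}
The almost-large-type hypothesis enters only through Huang--Osajda's construction and this description of links. That check of the link condition, on that complex, is the step your proposal does not supply.
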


The corollary is an immediate consequence of the Main Theorem, and the fact that almost-large-type Artin groups act geometrically on fit systolic complexes---Proposition~\ref{prop:fit_Artin}.

\subsubsection*{$C(3)$--$T(6)$ and $C(6)$ small cancellation groups}
Small cancellation is a powerful classical technique for constructing infinite groups; see e.g.\ \cite{LynSch2001,Wise2003,OsPry2018,Osa-scl} for some background. Small cancellation complexes are $2$-dimensional CW complexes with some combinatorial nonpositive curvature conditions---see definitions in Section~\ref{sec:smallcanc}. 
The graphical version of small cancellation is a far reaching generalization of the classical setting (that is, the one as presented in \cite{LynSch2001,Wise2003}), that allowed recently to construct examples of groups with exotic properties, e.g.\ new nonexact groups \cite{Osa-scl,Os-rf}.
There are three classical small cancellation conditions: the $C(6)$ condition, the $C(4)$--$T(4)$ condition, and the $C(3)$--$T(6)$ condition (see Definition~\ref{def:smallcanc}). In Propositions~\ref{prop:c3t6_is_systolic} \&~\ref{prop:c3t6_path_level_sets} we equip simply connected graphical $C(3)$--$T(6)$ small cancellation complexes with a fit systolic geometry. Together with our Main Theorem, it has the following consequence. 

    \begin{corollaryB}
        Uniformly locally finite simply connected graphical $C(3)$--$T(6)$ small cancellation complexes have Yu's Property A. Consequently, groups with finite graphical $C(3)$--$T(6)$ presentations are exact.
    \end{corollaryB}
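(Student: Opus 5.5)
Corollary B reduces to the Main Theorem. Given a uniformly locally finite simply connected graphical $C(3)$--$T(6)$ small cancellation complex $X$, I would first pass to the systolic complex attached to it by Proposition~\ref{prop:c3t6_is_systolic}. The natural candidate is the flag simplicial complex $\mathcal{S}(X)$ whose vertices are the vertices of $X$, with two vertices joined by an edge when they lie on a common relator (a common $2$-cell, or the image of a common component of the defining graph). Proposition~\ref{prop:c3t6_is_systolic} gives that $\mathcal{S}(X)$ is systolic. Proposition~\ref{prop:c3t6_path_level_sets} controls the level sets of intervals in its $1$-skeleton, which should give fitness directly or through the criterion Corollary~\ref{c:fit_condition}.

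Next I would check uniform local finiteness of $\mathcal{S}(X)$. Since $X$ is uniformly locally finite, each vertex lies on boundedly many $2$-cells. Each relator cycle has bounded length, because a uniformly locally finite complex with bounded vertex degrees has only finitely many relator isomorphism types up to the local structure; in the group case the presentation is finite. So the neighbourhood of a vertex in $\mathcal{S}(X)$ has uniformly bounded size. If unbounded relator lengths were allowed, I would instead restrict to the setting where ``uniformly locally finite'' includes bounded cell size, which is the standard convention for $2$-complexes.

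Then I would show that the identity on vertices is a quasi-isometry between $X^{(1)}$ and $\mathcal{S}(X)^{(1)}$. One direction is immediate, since every edge of $X$ is an edge of $\mathcal{S}(X)$. For the other, every edge of $\mathcal{S}(X)$ joins vertices at $X$-distance at most half the maximal relator length. Property A is a coarse invariant, so the Main Theorem applied to $\mathcal{S}(X)$ yields Property A for $X$.

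For the group statement, a finite graphical $C(3)$--$T(6)$ presentation gives a simply connected Cayley complex $X$ that is uniformly locally finite and on which the group acts geometrically. The group is therefore quasi-isometric to $X$ and has Property A. Exactness then follows from the equivalences recalled in the introduction \cite{HiRo2000,Oz2000,GuKa2002}.

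I expect the only delicate point to be that the cited propositions apply under exactly these hypotheses. This requires matching the graphical $C(3)$--$T(6)$ setup to the hypotheses of Propositions~\ref{prop:c3t6_is_systolic} and~\ref{prop:c3t6_path_level_sets}, and ensuring that bounded local data give uniform local finiteness of the systolic complex. The rest is formal.
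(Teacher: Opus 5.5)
Your route is the paper's: Proposition~\ref{prop:c3t6_is_systolic} makes $\Th(X)$ systolic, Proposition~\ref{prop:c3t6_path_level_sets} shows its level sets are segments (so it is trivially fit, and Corollary~\ref{c:fit_condition} is not needed), uniform local finiteness passes to $\Th(X)$, and the Main Theorem applies. Your explicit bi-Lipschitz comparison of $\Theta=X^{(1)}$ with $\Th(X)^{(1)}$ is a step the paper leaves implicit, and it is a welcome addition.

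The one step whose justification fails is uniform local finiteness. Uniform local finiteness of a $2$-complex in the CW sense does not bound relator lengths: a chain of polygons of all lengths, consecutive ones sharing a vertex, has bounded degrees. Moreover, in the graphical setting a $2$-cell is attached along \emph{every} immersed cycle of $\Omega$, so a vertex typically lies on infinitely many $2$-cells, and $X_{(\Omega,\Theta)}$ is not locally finite as a CW complex at all.

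The paper avoids this by \emph{defining} uniform local finiteness of $X_{(\Omega,\Theta)}$ as that of the coned-off complex $X^*_{(\Omega,\Theta)}$. The cone vertex over $\Omega_i$ has degree $|V(\Omega_i)|$, so this definition bounds $|V(\Omega_i)|$, the number of components through a vertex, and the degrees in $\Theta$. Bounded degree in $\Th(X)$ follows at once, and your quasi-isometry constant becomes $\sup_i \mathrm{diam}(\Omega_i)$ rather than half a relator length.

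Two smaller corrections. First, $\Th(X)$ keeps all edges of $\Theta$, including those lying in no $\Omega_i$. Your description of $\mathcal{S}(X)$ omits these, and without them the claim that every edge of $X$ is an edge of $\mathcal{S}(X)$ can fail. Second, for the group statement, argue directly: $\Theta$ is the Cayley graph, and the group acts on $\Th(X)$ freely and transitively on vertices, with bounded degree. Do not appeal to a geometric action on the CW complex $X$.
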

    
Furthermore, in Proposition~\ref{prop:fit_Wise} we show that for every simply connected classical $C(6)$ complex $X$, a simplicial complex introduced by Wise $W(X)$---certain systolic complex naturally associated with $X$ \cite{Wise2003,OsPry2018}---is fit. Hence we get the following.

    \begin{corollaryC}
        Uniformly locally finite simply connected classical $C(6)$ small cancellation complexes have Yu's Property A. Consequently, groups with finite $C(6)$ presentations are exact.
    \end{corollaryC}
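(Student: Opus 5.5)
Corollary~C will be deduced directly from the Main Theorem, once we know that the simplicial complex $W(X)$ introduced by Wise is a uniformly locally finite fit systolic complex quasi-isometric to $X$. Let $X$ be a uniformly locally finite simply connected classical $C(6)$ complex. Recall the construction of $W(X)$: its vertices are the $2$-cells of $X$, and a finite set of $2$-cells spans a simplex exactly when the cells have a common intersection. By \cite{Wise2003,OsPry2018}, $W(X)$ is systolic, and Proposition~\ref{prop:fit_Wise} gives that it is fit. Granting these two facts, three steps remain.

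\textbf{Uniform local finiteness of $W(X)$.} We first verify that $W(X)$ is uniformly locally finite. Since $X$ is uniformly locally finite, every $2$-cell has boundary length bounded by a constant $N$, and every vertex meets at most $N$ cells. The neighbours of a $2$-cell $R$ in $W(X)$ are the cells meeting $\partial R$. There are at most $N\cdot N$ of them, which bounds the vertex degrees of $W(X)$ uniformly. One subtlety is that $X$ may have $1$-cells lying in no $2$-cell, and then $W(X)$ does not see them. I would handle this in the standard way, attaching a degenerate cell or, equivalently, thickening so that every edge lies in a $2$-cell; this preserves the $C(6)$ condition. Alternatively, I would simply note that Proposition~\ref{prop:fit_Wise} is stated in the setting where this does not occur.

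\textbf{Quasi-isometry and the first assertion.} Next I would produce a quasi-isometry between $X^{(1)}$ and $W(X)^{(1)}$. Send each vertex $v$ of $X$ to any $2$-cell containing it. Adjacent vertices go to intersecting cells, or to cells at distance at most $2$ through a cell containing the edge, so this map is Lipschitz. In the other direction, a path of cells $R_0,\dots,R_k$ in $W(X)$ lifts to a path in $X^{(1)}$ of length at most $(k+1)N$, since consecutive cells meet and each cell has boundary length at most $N$. This gives coarse surjectivity and a coarse inverse. Property A is a coarse invariant for uniformly locally finite (bounded geometry) spaces. The Main Theorem applied to $W(X)$ then yields Property A for $X$.

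\textbf{Groups with finite $C(6)$ presentations.} For the group statement, take a finite $C(6)$ presentation of a group $H$. The universal cover $X$ of its presentation complex is a simply connected classical $C(6)$ complex. Since the presentation is finite, $X$ is uniformly locally finite and $H$ acts on it geometrically. By the Švarc--Milnor lemma, $H$ is quasi-isometric to $X$, so $H$ has Property A. By the equivalences recalled in the introduction \cite{HiRo2000,Oz2000,GuKa2002}, $H$ is exact.

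The only substantive input is fitness, namely Proposition~\ref{prop:fit_Wise}. If I had to prove it, I would use the fitness criterion of Corollary~\ref{c:fit_condition} and control the level sets of intervals in $W(X)$ via van Kampen diagrams of the $C(6)$ complex; this is where I expect the real difficulty to lie. For the corollary itself, the main point of care is the bookkeeping in the quasi-isometry step.
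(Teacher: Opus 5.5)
Your proposal is correct and takes the paper's route: $W(X)$ is fit systolic by Proposition~\ref{prop:fit_Wise}, the Main Theorem applies to it, and Property A passes to $X$ and to the group through the quasi-isometry $X\simeq W(X)$ and coarse invariance, which the paper leaves implicit and which you spell out correctly. One small correction: in the paper $W(X)$ is the nerve of the $2$-cells of the augmented complex $X'$, obtained by attaching a hexagon with new vertices along each edge that lies in no $2$-cell, so Proposition~\ref{prop:fit_Wise} already covers complexes with free edges (contrary to your claim that it excludes them) --- use exactly this fix rather than a ``degenerate cell''.
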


\subsubsection*{A $3$-dimensional systolic pseudomanifold} All the groups from the three families explored above act geometrically on $2$-dimensional contractible complexes, so are in a way ``$2$-dimensional''. We next present an example of a group acting geometrically on a $3$-dimensional fit  systolic pseudomanifold. Following a construction by \'Swi{\c a}tkowski \cite{Sw}, Wieszaczewski \cite{JW} defined a systolic group  $G(54,54,72,72)$ acting geometrically on a $3$-dimensional systolic pseudomanifold $K(54,54,72,72)$. We show that  this complex is fit.

    \begin{corollaryD}
        The $3$-dimensional systolic pseudomanifold $K(54,54,72,72)$ is fit and hence it has Property A. Consequently, the group $G(54,54,72,72)$ is exact.
    \end{corollaryD}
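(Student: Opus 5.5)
The statement is Corollary D. It has two parts: $K=K(54,54,72,72)$ is fit, and therefore it (and the group) have Property A.

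\textbf{Reduction to fitness.} The second part is formal once fitness is known. $K$ is a locally finite systolic pseudomanifold carrying a geometric action of $G=G(54,54,72,72)$, and cocompactness gives uniform local finiteness. The Main Theorem then yields Property A for $K$. By the Švarc--Milnor lemma, $G$ is quasi-isometric to the $1$-skeleton of $K$. Property A is a coarse invariant, so $G$ has Property A and hence is exact, by the equivalences recalled in the introduction. The substance is therefore proving that $K$ is fit.

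\textbf{Verifying fitness.} I would not check directly that level sets of intervals are quasi-segments. Instead I would use the criterion of Corollary~\ref{c:fit_condition}, which (as I expect from its role) reduces fitness to a local condition on vertex links. The two natural local conditions are:
\begin{enumerate}[(a)]
\item links are $6$-large with large systole, so that no ``thick'' configurations occur;
\item certain subcomplexes of links, namely those arising as intersections of a link with balls or spheres, have bounded diameter or are path-like.
\end{enumerate}
Recall Świątkowski's construction. The vertex links of $K(54,54,72,72)$ are $2$-dimensional (links in a $3$-pseudomanifold are surfaces), and their $1$-skeleta have girth essentially given by the parameters $54,72$. So every vertex link is a flag $2$-dimensional simplicial surface with systole far above $6$.

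I would then proceed in three steps:
\begin{enumerate}[(1)]
\item From the explicit description in \cite{Sw,JW}, extract the combinatorics of the links: their simplicial structure, the girth of their $1$-skeleta, and the structure of the links of edges, which are cycles of length $54$ or $72$.
\item Check that the hypotheses of Corollary~\ref{c:fit_condition} reduce to statements about such surfaces. The statement I expect to need is that the sets of neighbours of a vertex $v$ lying in a sphere $S_k(w)$ or in a ball $B_k(w)$ span, in $\Lk(v)$, either a simplex or a ``short'' convex subcomplex. This would follow from large girth together with the directed geodesic / projection structure of systolic complexes.
\item Verify these local statements using the large systoles, in the spirit of Remark~\ref{rem:2dim} but one dimension higher.
\end{enumerate}

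\textbf{Main obstacle.} The hardest step is (2)--(3): controlling how level sets in an interval $I(u,w)$ pass through a vertex $v$ when links are $2$-dimensional rather than $1$-dimensional. In the $2$-dimensional case links are graphs and the relevant sets are arcs. Here the relevant sets are subcomplexes of a surface, and one must show that their intersection with successive spheres cannot spread into wide regions.

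My first attempt would use Proposition~\ref{prop:equivalent_definition} to rephrase fitness as a bound on the width of interval slices. I would then bound that width by a local argument. Two vertices of one slice that are far apart would yield, via normal clique paths and Proposition~\ref{p:convergence_of_normal_clique_paths}, a short cycle in some vertex link. For links of girth at least $54$, such a short cycle is impossible.

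If this fails, the fallback is a direct computer-assisted or explicit check of the finitely many link types, using the group's transitivity on them.
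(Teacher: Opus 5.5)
Your reduction of Property~A and exactness to fitness is fine. The fitness argument, however, has a genuine gap: it rests on a wrong picture of the links, and the criterion you plan to use actually fails for this complex.

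The numbers $54$ and $72$ count triangles, not girth. The vertex links are the flat tori $T_{54}$ and $T_{72}$, with $27$ and $36$ vertices, all of degree $6$. Their $1$-skeleta contain triangles and homotopically nontrivial induced $6$-cycles, so their systole is exactly $6$, the minimum that $6$-largeness allows. Edge links are hexagons, not cycles of length $54$ or $72$.

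More seriously, Corollary~\ref{c:fit_condition} cannot apply. By Remark~\ref{r:3psmfld}(3), $K(54,54,54,54)$ is not fit, and all its vertex links are $T_{54}$. By the contrapositive of Corollary~\ref{c:fit_condition}, $T_{54}$ therefore contains an induced doubled asteroid. Explicitly: let $h_0,\dots,h_5$ be the hexagonal link of a vertex $w$ of $T_{54}$, and let $c\neq w$ and $c'\neq w$ be the other common neighbours of $h_0,h_1$ and of $h_3,h_4$. Through a nontrivial loop, $c$ and $c'$ have a common neighbour $z$. The triangles $h_0h_1c$ and $h_3h_4c'$, with tips $h_5,h_2,z$, then span an induced doubled asteroid.

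The type-$54$ vertices of $K(54,54,72,72)$ have exactly this link. So no argument that uses only the isomorphism types of vertex links can prove fitness, whether via large systole, ``short'' link subcomplexes, or a finite check of link types. Your fallback fails too. Proposition~\ref{p:convergence_of_normal_clique_paths} only says that normal clique paths enter $I(x_0,x)$ quickly; it gives no bound on the width of level sets. The intended contradiction with ``girth $\geq 54$'' is also unavailable, since no such girth exists.

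What is missing is global information about how the links are glued in the development. The paper argues by contradiction:
\begin{enumerate}
\item If $K$ were not fit, Proposition~\ref{prop:equivalent_definition} gives arbitrarily large isometric arrowheads.
\item Cocompactness and K\"onig's lemma then give an isometric triplane, hence three flats $F_0,F_1,F_2$ through its axis, any two of which span it.
\item Each $3$-simplex is a strict fundamental domain. Hence $v_0$ and $v_1$ (the apexes of $\sigma_1'$ and $\sigma_1$) lie in one orbit, and every $3$-simplex has exactly two vertices of type $54$ and two of type $72$.
\item This is combined with Lemmas~\ref{l:uniqueness-Lk(72,nontrivial)}, \ref{l:Lk(trivial)} and~\ref{l:Lk(72,nontrivial)} on trivial and nontrivial vertices of flats to show that none of $a_1,b_1,c_1$ can be of type $72$.
\item Three type-$54$ vertices in $\sigma_1$ give the contradiction.
\end{enumerate}
This orbit-and-type bookkeeping is what separates $K(54,54,72,72)$ from the non-fit $K(54,54,54,54)$, and your proposal has nothing that could play its role.
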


See Section~\ref{sec:54547272} for the definitions of $K(54,54,72,72)$ and $G(54,54,72,72)$, and for a discussion---Remark~\ref{r:3psmfld}.

	\subsection{Related results}
    \label{sec:relres}
    The coarse Baum-Connes conjecture and hence the strong Novikov conjecture have been known to hold for systolic groups by \cite{OsPr,FukOg2020}. Property A or coarse embeddability into a Hilbert space are unknown for general systolic complexes or groups, cf.\ Section~\ref{sec:final}.

    As for Artin groups, Property A holds for \emph{right-angled} ones (all labels are $2$) \cite{CaNi2005} and, more generally, for \emph{FC-type} Artin groups (cliques in the defining graph $\Gamma$ induce finite Coxeter groups) \cite{GuNi2011}. Both results use the fact that Artin groups in such classes act nicely on CAT(0) cubical complexes. Large-type Artin groups treated in the current article are seen as very different from FC-type Artin groups. More relevant to our results is the proof of Property A for $2$-dimensional Artin groups of hyperbolic type from \cite{HoHu2021}. Their proof is based on the fact that such groups admit nice actions on Gromov hyperbolic complexes. This approach is not applicable for general large-type Artin groups. An important example (not approachable by techniques from \cite{HoHu2021}) is $A_\Gamma$ for $\Gamma$ being a triangle with all edges labelled by $3$. This is a large-type Artin group (hence $2$-dimensional) which is not of hyperbolic type. It should be mentioned that this particular $A_\Gamma$ is commensurable to the mapping class group of the five-punctured sphere, hence its exactness follows from \cite{Kida2006,Ham2009}. However, most large-type Artin groups are not coarsely equivalent to mapping class groups of surfaces. Finally, let us mention that the proof of exactness in \cite{HoHu2021} is used there to obtain measure equivalence rigidity for corresponding groups; cf.\ Section~\ref{sec:final}.

    For finitely presented graphical $C(6)$, $C(4)$--$T(4)$, or $C(3)$--$T(6)$ small cancellation groups the coarse Baum-Connes conjecture and hence the strong Novikov conjecture follow from combining results from \cite{Wise2003,OsPry2018,FukOg2020,CCGHO25,duda2023}. Property A or coarse embeddability into a Hilbert space were not known for them. See Section~\ref{sec:final} for some open questions.

\subsection{Organization}
	
For proving the Main Theorem we use a criterion by \v{S}pakula and Wright \cite{SpWr}---see Proposition~\ref{prop:criterion}. We also follow the general pattern of a proof of Property A for CAT(0) cubical complexes from \cite{SpWr}. To do this we need to establish Proposition~\ref{p:convergence_of_normal_clique_paths} stating that normal clique paths converge quickly to intervals. For satisfying the item (\ref{item:3}) in Proposition~\ref{prop:criterion} we need the intervals in the systolic complex to be ``not too large''. This leads to the definition of ``fit systolic complex'' -- Definition~\ref{def:fit}. 
To justify the definition, we recall the fundamental property of level sets in systolic complexes being quasi-trees in Section~\ref{sec:fit}.
With all this in hand we prove the Main Theorem in Section~\ref{sec:Property_A}.

In Sections \ref{sec:Artinfit}, \ref{sec:c3t6} and \ref{sec:c6} we show that almost-large-type Artin groups, finitely presented graphical $C(3)$--$T(6)$ groups, and finitely presented classical $C(6)$ groups act geometrically on uniformly locally finite fit systolic complexes, thus proving Corollaries A, B and C.
In Section \ref{sec:54547272} we exhibit an example of a fit systolic $3$-pseudomanifold acted geometrically upon a group, proving Corollary D. 

Section~\ref{sec:preliminaries} contains some preliminary material on graphs (fixing the notation for the rest of the paper), and on Yu's Property A. In the final Section~\ref{sec:final} we comment on further directions of research and applications of our results.

\section{Preliminaries}\label{sec:preliminaries}

\subsection{Graphs}
\label{sec:premgr}
All graphs $G=(V(G),E(G))$ considered in this paper are simplicial, undirected and connected.
For two distinct vertices $u,v\in V(G)$ we write $u\sim v$ when there is an edge connecting $u$ with $v$ and $u\nsim v$ otherwise. 
The subgraph of $G$ \emph{induced by} a subset $A\subseteq V(G)$ is the graph
$G[A]=(A,E')$ such that $uv\in E'$ if and only if $uv\in E(G)$. A \emph{clique} is a complete subgraph of $G$, i.e., a subset of pairwise adjacent vertices. 
The \emph{distance}
$d(u,v)=d_G(u,v)$ between two vertices $u$ and $v$ of a graph $G$ is the
length of a shortest $(u,v)$-path. 
An induced subgraph $H=G[A]$ of a graph $G$ is an \emph{isometric subgraph} of $G$ if $d_H(u,v)=d_G(u,v)$ for any
two vertices $u,v\in A$. The \emph{interval}
$I(u,v)$ between $u$ and $v$ consists of all vertices on shortest
$(u,v)$-paths, that is, of all vertices (metrically) \emph{between} $u$
and $v$: $I(u,v)=\{ x\in V(G): d(u,x)+d(x,v)=d(u,v)\}$. An induced
subgraph of $G$ 
is called \emph{convex}
if it contains the interval in $G$ between any two of its
vertices.  For a vertex $v$ of $G$ and an integer $r\ge 1$, we will denote  by $B_r(v)$ the \emph{ball} in $G$
(and the subgraph induced by this ball)  of radius $r$ centred at  $v$, i.e., $B_r(v)=\{ x\in V(G): d(v,x)\le r\}.$ More generally, the $r$-{\it ball  around a set} $A\subseteq V(G)$
is the set (or the subgraph induced by) $B_r(A)=\{ v\in V(G): d(v,A)\le r\},$ where  $d(v,A)=\mbox{min} \{ d(v,x): x\in A\}$.
Similarly, the \emph{sphere} in $G$ centred at $v$ of radius $r$ is the set (or the subgraph induced by) $S_r(v)=\{ x \in V(G): d(v,x) = r\}$.
As usual, $N(v)=S_1(v)$ denotes the set of neighbours of a vertex
$v$ in $G$, and $N[v]=B_1(v)$.
The \emph{link} of $v \in V(G)$ is the subgraph of $G$ induced by $N(v)$.  Any subset $S$ of the sphere $S_r(v)$ is said to be at \emph{uniform distance $r$} from $v$. 

Three vertices $u,v,w$ of a graph $G$ form a \emph{metric triangle} $uvw$ if   $I(u,v), I(u,w),$ and $I(w,u)$ pairwise intersect only at the end-vertices. A metric triangle $uvw$ is \emph{strongly equilateral} of size $k$ if $d(u,x)=k$ for all $x\in I(v,w)$ (and similarly with the roles of $u,v,w$ swapped). 
A \emph{quasi-median} of a triplet $x,y,z$ is a metric triangle $x'y'z'$ such that $d(x,y)=d(x,x')+d(x',y')+d(y',y), d(y,z)=d(y,y')+d(y',z')+d(z',z),$ and $d(z,x)=d(z,z')+d(z',x')+d(x',x)$. Each triplet $x,y,z$ admits at least one quasi-median. 

\begin{definition}[Bridged/systolic]
    A graph $G$ is \emph{bridged} if any isometric cycle of $G$ has size $3$. A flag simplicial complex is \emph{systolic} if its $1$-skeleton is a bridged graph.
\end{definition}

Systolic complexes are exactly simply connected flag simplicial complexes in which the links of vertices do not contain induced $4$-cycles and $5$-cycles \cite{Ch_CAT, JaSw}.
A \emph{chord} of a cycle in a graph $G$ is an edge of $G$ that is not part of the cycle but connects two of its vertices.
A graph $G$ is \emph{chordal} if any cycle of length $>3$ of $G$ has a chord. In other words, every such cycle is not an induced subgraph.  Obviously, chordal graphs are bridged. 

A \emph{$k$-fan} is a graph consisting of a path of length $k$ and a vertex adjacent to all vertices of this path; Figure~\ref{f:fans}, top left. A \emph{tripod} is a star with 3 tips, i.e., a graph consisting of a central vertex adjacent to $3$ pairwise non-adjacent vertices (called \emph{tips}); Figure~\ref{f:fans}, top centre. An \emph{asteroid} is a graph consisting of a triangle and each vertex of this triangle is adjacent to one extra-vertex so that the 3 extra-vertices are pairwise non-adjacent; Figure~\ref{f:fans}, top right. 
An \emph{$n$-tripod} is the graph obtained by subdividing each of the edges of a tripod into paths of length $n$; Figure~\ref{f:fans}, bottom left.
Similarly, an \emph{$n$-asteroid} is the graph obtained from an asteroid by subdividing each of the external three edges into paths of length $n$; Figure~\ref{f:fans}, bottom right. If we do not specify $n$, we refer to $n$-asteroids and $n$-tripods as \emph{large asteroids} and \emph{large tripods}. 
We further abuse notation and assume that a $0$-tripod is a vertex and a $0$-asteroid is a triangle.
A \emph{doubled asteroid} is the graph obtained from glueing two asteroids along their tips; see Figure \ref{f:triplane}, right.

\begin{figure}[h!]
	\begin{center}
	\includegraphics[scale=1]{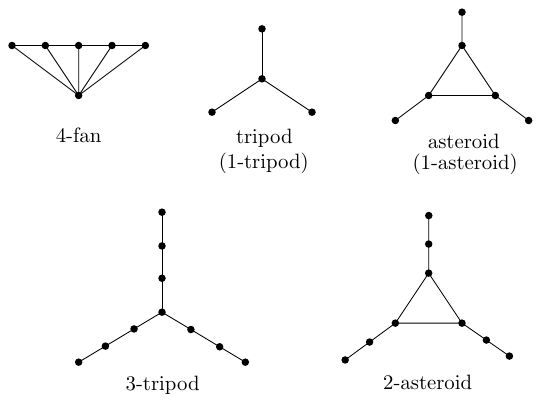}
	\end{center}
	\caption{}
	\label{f:fans}
\end{figure}

\subsection{Yu's Property A}\label{sec:A}
For completeness and for a curious reader we present here a definition of Property A that is basically the same as the original Yu's definition from \cite{Yu2000}. It can be seen as a ``nonequivariant'' version of the definition of amenability via F\o lner sets. See e.g.\ \cite{Willett2009} for dozens of other definitions, and \cite{NWZ2025} for a recent definition that resembles even more closely the one of amenability.

\begin{definition}\label{d:A}
A uniformly discrete metric space $(X,\rho)$ has the
\emph{Property A} if for every $\epsilon,R > 0$ there exists a collection of finite subsets
$\{ A_x \}_{x\in X}$, $A_x\subset X\times \mathbb{N}$, and a constant $S > 0$ such that
\begin{itemize}
    \item $\frac{|A_x\triangle A_y|}{|A_x\cap A_y|}\leq \epsilon$ if $\rho(x,y)\leq R$, and
    \item $A_x\subseteq B_S(x)\times \mathbb{N}$, where $B_S(x)$ is the $S$-ball in  $(X,\rho)$ around $x$.
\end{itemize}
\end{definition}

We continue the list of characterizations of Property A for a finitely generated group $H$ from  Introduction:
\begin{itemize}
        \item the reduced $C^\ast$-algebra of $H$ is exact \cite{KiWas};
		\item the uniform Roe algebra of $H$ is nuclear \cite{Oz2000};
        \item bounded cohomology groups of $H$
with coefficients in a certain class of dual modules vanish \cite{BNN2012,Mon2011};
        \item $H$ has the operator norm localization property \cite{Sa2014};
        \item asymptotically invariant cohomology groups of $H$ vanish \cite{BNW2012}.
	\end{itemize}
The list is very incomplete.

For the proof of our Main Theorem all we will need to know about Property A is a useful combinatorial characterization by \v{S}pakula and Wright \cite{SpWr}---see Proposition~\ref{prop:criterion} in Section~\ref{sec:Property_A}.

\section{Fit systolic complexes}
	\label{sec:fit}
	In this section we define a subclass of systolic complexes---fit systolic complexes---which is the main object of study in this paper. To define it we need some preparations.

\subsection{Level sets and fit systolic complexes}\label{sec:levelsfit}

First, we recall some known properties  of bridged graphs.

\begin{theorem}\label{bridged-convex-balls}  \cite{FaJa,SoCh} A graph is bridged if and only if   the balls $B_r(A)$ around convex sets $A$ are convex. 
\end{theorem}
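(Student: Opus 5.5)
The two directions are of very different difficulty. I would do the easy one, that the convexity of balls forces bridgedness, by a direct cycle argument, and spend most of the effort on the converse.

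\emph{Balls around convex sets convex $\Rightarrow$ bridged.} Suppose $C$ is an isometric cycle of length $n\ge 4$ and use two kinds of convex sets: single vertices, and edges (the interval between adjacent vertices is just the two vertices). If $n=2k$, take $v\in C$ and its antipode $u$ on $C$, so $d(v,u)=k$. The two cycle-neighbours $x,y$ of $u$ satisfy $d(x,v)=d(y,v)=k-1$, so $x,y\in B_{k-1}(v)$. Since $C$ is isometric and $n\ge 4$, $d(x,y)=2$, hence $u\in I(x,y)$, while $u\notin B_{k-1}(v)$. This contradicts convexity of $B_{k-1}(\{v\})$. If $n=2k+1\ge 5$, take an edge $ab$ of $C$ and the vertex $u$ of $C$ opposite to it, so $d(u,\{a,b\})=k$. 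The cycle-neighbours $x,y$ of $u$ lie at distance $k-1$ from $\{a,b\}$ and satisfy $d(x,y)=2$, so again $u\in I(x,y)\setminus B_{k-1}(\{a,b\})$. This contradicts convexity of $B_{k-1}(\{a,b\})$.

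\emph{Bridged $\Rightarrow$ balls around convex sets convex.} Since $B_r(A)=B_1(B_{r-1}(A))$, induction on $r$ reduces this to showing that $B_1(A)$ is convex whenever $A$ is convex. I would argue by a minimal counterexample. Suppose $x,y\in B_1(A)$ and some shortest $(x,y)$-path $P$ leaves $B_1(A)$. Choose $x',y'\in A$ with $x'\in N[x]$ and $y'\in N[y]$, and a geodesic $Q\subseteq A$ from $x'$ to $y'$ (available by convexity of $A$). Among all such configurations, take one minimizing the length of the closed walk $P\cdot yy'\cdot Q^{-1}\cdot x'x$. Shortening $P$ to the subpath between the last vertex before it exits $B_1(A)$ and the first vertex after it re-enters, we may assume every interior vertex of $P$ is at distance $\ge 2$ from $A$.

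The closed walk has length at least $4$ (it contains a vertex at distance $2$ from $A$), so it is not a triangle. Bridgedness says it is not an isometric cycle, so there is a pair of its vertices whose distance in $G$ is strictly less than their distance along the cycle. I would take a shortest path $R$ realizing this shortcut and split the cycle into two shorter closed walks. The case analysis is by where the endpoints of $R$ lie.
\begin{itemize}
\item Both endpoints on $Q$: impossible, since $Q$ is a geodesic and $A$ is convex.
\item Both endpoints on $P$: impossible, since $P$ is a geodesic.
\item One endpoint on $P$, one on $Q\cup\{x,y\}$: then one of the two pieces is again a configuration of the same type, with a geodesic leaving $B_1(A)$, but with a shorter cycle. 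That contradicts minimality, unless $R$ itself passes through a vertex at distance $\ge2$ from $A$, in which case we restart with $R$ in place of $P$ and the count still drops.
\end{itemize}

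\emph{Main obstacle.} The delicate point is the third case: making rigorous that the shortcut always produces a strictly smaller counterexample, and not merely a smaller cycle in which all vertices are already close to $A$. If the direct surgery becomes messy, I would first derive from bridgedness the triangle and quadrangle conditions (bridged graphs are weakly modular), using the same kind of isometric-cycle argument as above. I would then prove the local criterion: for $u,w\in B_1(A)$ with $d(u,w)=2$, every common neighbour of $u$ and $w$ lies in $B_1(A)$. Finally I would pass from local to global convexity for connected sets in weakly modular graphs, by induction on $d(x,y)$ using the triangle condition.
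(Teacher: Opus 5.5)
Your proof that convexity of the balls $B_r(A)$ around convex sets forces bridgedness is correct. Taking $A$ to be an edge for odd cycles is really needed, since in $C_5$ all vertex balls are convex. The reduction to $r=1$ via $B_r(A)=B_1(B_{r-1}(A))$ is also correct.

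The gap is in the converse, exactly in your third bullet, which is where the whole content of the theorem lies.

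\emph{Interior $p$.} Suppose the shortcut $R$ joins an interior vertex $p$ of $P$ to some $q\in Q$. The two smaller cycles consist of the subpath of $P$ from $x$ (resp.\ $y$) to $p$, then $R$, then part of $Q$, then the edge $xx'$ (resp.\ $yy'$). Neither is a configuration of your type. The subpath of $P$ followed by $R$ is in general not a geodesic, so you do not obtain a geodesic between two vertices of $B_1(A)$ that leaves $B_1(A)$. And $R$ itself starts at $p\notin B_1(A)$, so it cannot take the place of $P$.

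\emph{The case $p=x$.} Here necessarily $d(x,q)=d(x',q)$: larger is excluded by the shortcut, and smaller would put $x$ in $I(x',q)\subseteq A$. If the second vertex of $R$ lies in $A$, you can replace $x'$ by it and shorten $Q$, contradicting minimality. But if it lies in $B_1(A)\setminus A$, minimality gives nothing. What you would need is a common neighbour of $x$ and $x'$ one step closer to $q$ (it would lie in $A$ by convexity), that is, a triangle condition, which you do not have at this point. If $R$ leaves $B_1(A)$, trimming it forces new feet in $A$ and a new geodesic between them, and nothing shows that the new cycle is shorter than $C$. For the same reason you must minimise over already trimmed configurations, since trimming $P$ changes $x',y',Q$ and can lengthen the cycle.

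\emph{The fallback.} It does not get around this.
\begin{enumerate}
\item Step (1) is not a routine isometric-cycle argument. Weak modularity of bridged graphs is usually deduced from the very theorem you are proving; the paper likewise derives (INC) from it. A direct attempt runs into the same obstruction. For TC at $v$ and an edge $xy$ with $d(v,x)=d(v,y)=k$, pick neighbours $x_1,y_1$ of $x,y$ at distance $k-1$ from $v$. The case $d(x_1,y_1)=3$ says precisely that $x_1xyy_1$ is a geodesic between two vertices of $B_{k-1}(v)$ that leaves this ball. Similarly, in a graph without induced $4$-cycles, QC is equivalent to the local convexity of all balls $B_k(v)$, which is a special case of the statement to be proved.
\item Step (2) is only asserted.
\item Step (3) cannot rest on TC alone. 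TC holds vacuously in bipartite graphs, and in $C_6$ a path of length $3$ is connected and locally convex but not convex. So the local-to-global lemma for weakly modular graphs genuinely uses QC.
\end{enumerate}

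Hence the hard direction remains unproved. What is missing is a mechanism controlling $d(\cdot,A)$ along the shortcut $R$. A complete argument has to establish such a triangle-type property within the same induction rather than assume it.
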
 


\begin{lemma} \cite{Ch_mt} \label{strongly-equilateral} All metric triangles of a bridged graph are strongly equilateral. 
\end{lemma}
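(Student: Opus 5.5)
We prove that in a bridged graph $G$ every metric triangle $uvw$ is strongly equilateral. The plan has three steps: first show the three sides of $uvw$ have equal length $k$, then show every vertex of $I(v,w)$ is at distance exactly $k$ from $u$, by induction on $k$ using convexity of balls (Theorem~\ref{bridged-convex-balls}).

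\textbf{Step 1 (a single vertex).} Let $x\in I(v,w)$ and set $r=d(u,x)$. Suppose, towards a contradiction, that $r<d(u,v)$ or $r<d(u,w)$. The ball $B_r(u)$ is convex, being a ball around the convex set $\{u\}$. One checks that $B_{r'}(u)$ with $r'=\max(d(u,v),d(u,w))-1$ contains $x$. We would like to compare this ball with the pair $v,w$.

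\textbf{Step 2 (neighbour lemma).} The cleaner route is the following claim: if $d(u,v)=d(u,w)=m$ and $d(v,w)\ge 1$ in the metric triangle, then every neighbour $x$ of $v$ in $I(v,w)$ satisfies $d(u,x)=m$.
\begin{itemize}
  \item If $d(u,x)=m-1$, then $x\in I(v,u)\cap I(v,w)$ with $x\neq v$, contradicting that $uvw$ is a metric triangle.
  \item If $d(u,x)=m+1$, consider the path from $x$ along a geodesic to $w$. The vertices $v$ and $w$ lie in the convex set $B_m(u)$, so the whole interval $I(v,w)$ lies in $B_m(u)$. This forces $d(u,x)\le m$, which rules out $m+1$.
\end{itemize}
Hence $d(u,x)=m$.

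\textbf{Step 3 (propagation and equal sides).} Propagate along a geodesic from $v$ to $w$, applying the neighbour lemma to successive sub-triangles. This requires checking that $u,x,w$ is again a metric triangle: sub-intervals of $I(v,w)$ meet $I(u,w)$ only at $w$, but $I(u,x)\cap I(u,w)$ also needs control. The same convexity trick applies, since a vertex of $I(u,x)\cap I(u,w)$ other than $u$ would yield, via $B_{m-1}$ of that vertex and convexity, a point of $I(u,v)\cap I(u,w)$ other than $u$.

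To get equal sides: suppose $d(u,v)<d(u,w)$. Then $w\notin B_{d(u,v)}(u)$. Take the neighbour $x$ of $w$ on a geodesic to $v$. Convexity of $B_{d(w,u)-1}(u)$, which contains $v$ and the neighbour of $w$ in $I(w,u)$, gives a contradiction via the metric-triangle condition at $w$. So the sides are equal.

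The main obstacle is Step 2's first case: ruling out that an interval vertex gets closer to $u$ without immediately landing in $I(u,v)$. I would handle it by an induction on $d(v,w)$ that keeps track of distances to $u$, using the induced triangle $u,v,x$ together with the bridged condition that rules out isometric cycles of length greater than 3.
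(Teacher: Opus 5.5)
Your Step~2 is correct: convexity of $B_m(u)$ bounds $d(u,\cdot)$ by $m$ on all of $I(v,w)$, and the metric triangle condition at $v$ rules out $m-1$ for a neighbour of $v$. Step~3, however, propagates this using a claim that is false: $u,x,w$ is in general \emph{not} a metric triangle.

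Take the triangular grid, i.e.\ the $1$-skeleton of $E^2_\Delta$, which is bridged. Its vertices are $ae_1+be_2$ and its edges go in the directions $\pm e_1,\pm e_2,\pm(e_2-e_1)$. Let $u=0$, $v=ke_1$, $w=ke_2$ with $k\ge 2$. The three sides are straight segments and are the unique geodesics, so $uvw$ is a metric triangle. Now take the neighbour $x=(k-1)e_1+e_2$ of $v$ in $I(v,w)$. Then $e_2\in I(u,x)\cap I(u,w)$, since $d(u,e_2)+d(e_2,x)=1+(k-1)=d(u,x)$. So no ``convexity trick'' can give $I(u,x)\cap I(u,w)=\{u\}$.

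Moreover, that is not the condition your neighbour lemma needs. To apply it at $x$ you need $I(x,u)\cap I(x,w)=\{x\}$, i.e.\ that the next vertex towards $w$ does not drop to distance $m-1$ from $u$. You never check this, and your closing paragraph admits it is the main obstacle. Since it is exactly the content of the lemma, the lower bound $d(u,y)\ge m$ for $y\in I(v,w)$ remains unproved. The paper gives no argument of its own, only the citation to Chepoi, so your proof has to carry this step.

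The missing ingredient is local: (TC) together with (INC). With $m=d(u,v)=d(u,w)$, suppose some vertex of $I(v,w)$ is at distance less than $m$ from $u$, and pick such an $x$ with $j=d(v,x)$ minimal.
\begin{enumerate}
\item Take a geodesic $v=x_0,\dots,x_j=x$; it lies in $I(v,w)$. By minimality and convexity, every $x_i$ with $i<j$ is at distance exactly $m$ from $u$, so $d(u,x)=m-1$. Your Step~2 gives $j\ge 2$.
\item (TC) applied to the edge $x_{j-2}x_{j-1}$ gives $z\sim x_{j-2},x_{j-1}$ with $d(u,z)=m-1$.
\item Both $z$ and $x_j$ lie in the clique $L_1(x_{j-1},u)$, and $z\ne x_j$ because $d(x_{j-2},x_j)=2$. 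Hence $z\sim x_j$.
\item Therefore $z\in I(x_{j-2},x_j)\subseteq I(v,w)$ with $d(v,z)=j-1$, contradicting the minimality of $j$.
\end{enumerate}

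Your equal-sides paragraph has the same defect: convexity of $B_{m-1}(u)$ alone yields no contradiction. Assume $d(u,v)<d(u,w)=m$.
\begin{enumerate}
\item As in Step~2, the neighbour $w'$ of $w$ in $I(w,v)$ satisfies $d(u,w')=m$.
\item (TC) then gives $z\sim w,w'$ with $d(u,z)=m-1$.
\item The metric triangle condition at $w$ forces $d(z,v)=d(w,v)$.
\item Hence $w'\in I(z,v)\subseteq B_{m-1}(u)$, which contradicts $d(u,w')=m$.
\end{enumerate}
Step~1 leads nowhere and can simply be deleted.
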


Recall, that for $B,C>0$ a map $f\colon (X,d_X) \to (Y,d_Y)$ between metric spaces is a \emph{$(B,C)$-quasi-isometry} if the $C$-neighbourhood of the image of $f$ is the whole space $Y$, and for every $x,x'\in X$ we have
\begin{align*}
    B^{-1}d_X(x,x')-C\leq d_Y(f(x),f(x'))\leq Bd_X(x,x')+C.
\end{align*}

\begin{lemma} \label{chordal-quasi-tree} \cite{BrChDr}  Chordal graphs are $(1,2)$-quasi-isometric to trees. 
\end{lemma}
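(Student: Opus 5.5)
We aim to show that every connected chordal graph $G$ is $(1,2)$-quasi-isometric to a tree. The tree will be built from a breadth-first layering of $G$, and the required bounds will come from the fact that chordal graphs are bridged, which gives convexity of balls via Theorem~\ref{bridged-convex-balls}.

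\emph{Construction of the tree.} Fix a base vertex $v$ and, for each $r\ge 0$, consider the sphere $S_r(v)$. Call two vertices $x,y\in S_r(v)$ equivalent if they lie in the same connected component of the subgraph induced by $V(G)\setminus B_{r-1}(v)$. The equivalence classes (the ``clusters'') are the vertices of a graph $T$. We join a cluster $C\subseteq S_r(v)$ to a cluster $C'\subseteq S_{r-1}(v)$ whenever some vertex of $C$ is adjacent to some vertex of $C'$. The map $f$ sends each vertex of $G$ to its cluster, and it is surjective.

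\emph{$T$ is a tree.} Each cluster $C\subseteq S_r(v)$ with $r\ge 1$ must have exactly one parent. Suppose $C$ had neighbours in two distinct clusters $C'_1,C'_2\subseteq S_{r-1}(v)$. Then we can form a cycle as follows: go inside the component of $V(G)\setminus B_{r-1}(v)$ containing $C$, step down into $C'_1$ and $C'_2$, and close the cycle through $v$ using geodesics. Take such a cycle of minimal length. Chordality forces it to have a chord. A chord cannot join vertices whose distances to $v$ differ by more than one, and a chord inside the upper part would merge the two lower clusters, a contradiction. The cleanest way to run this argument is with convexity: because $B_{r-1}(v)$ is convex, any two vertices of $S_{r-1}(v)$ joined through the outside by a path of length at most $2$ are adjacent or have a common neighbour in $S_{r-2}(v)$. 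Using chordality, one then shows by induction that the neighbours of $C$ in $S_{r-1}(v)$ form a single cluster. This yields a tree with root $\{v\}$.

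\emph{Distance estimates.} Take $x,y\in V(G)$ and let $C$ be the lowest common ancestor of $f(x)$ and $f(y)$, lying at level $k$. Then
\[d_T(f(x),f(y))=d(v,x)+d(v,y)-2k.\]
For the upper bound, any $(x,y)$-path must descend to level at most $k+1$. Moreover, if the path descends only to level $k+1$, it must pass through two distinct children clusters, and these are separated. From this we get $d(x,y)\ge d_T(f(x),f(y))-2$, which together with the Lipschitz property of $f$ gives the upper bound $d_T\le d+2$.

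For the lower bound, we need $d(x,y)\le d_T(f(x),f(y))+2$. The key point is that every cluster has diameter at most $1$ in $G$ (or at most $2$, depending on the exact normalisation). This follows from chordality together with convexity of balls: two vertices of one cluster, joined by an outside path, span a cycle with the geodesics to $v$. Cutting this cycle by chords shows that the two vertices are adjacent, or that they have a common neighbour one level down. Once clusters are known to have bounded diameter, we go from $x$ down to a vertex of $C$, cross $C$, and go up to $y$. This gives $d(x,y)\le d_T+\operatorname{diam}$, and a careful count achieves the constant $2$.

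\emph{Main obstacle.} The hardest step is the cluster-diameter bound, i.e.\ showing that vertices in one cluster are within distance $1$. I would approach it by taking a shortest outside path between the two vertices and using minimality together with chordality to reduce the path step by step. The delicate part is the bookkeeping needed to reach exactly the constants $(1,2)$ rather than merely some constants.
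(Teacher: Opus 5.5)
The paper gives no proof of this lemma; it only cites Brandst\"adt--Chepoi--Dragan. Your argument therefore has to stand on its own, and it fails at the step you yourself call the key point.

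\textbf{The cluster-diameter claim is false.} Clusters of a chordal graph need not have diameter $1$. If $v$ is adjacent to every vertex of a path $u_1u_2u_3$, then $S_1(v)$ is one cluster and $d(u_1,u_3)=2$. Diameter $2$ fails too. Glue the triangle $vpq$ along $pq$ to the hexagon $p,a,c,d,b,q$ triangulated by the chords $pc,pd,qd$. This graph is chordal: $v,a,b,c,p,q,d$ is a perfect elimination ordering. The set $S_2(v)=\{a,c,d,b\}$ is a single cluster, joined by the path $acdb$. But $N(a)=\{p,c\}$ and $N(b)=\{q,d\}$, so $d(a,b)=3$ while $f(a)=f(b)$.

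So no careful count can give $d\le d_\Gamma+2$ for the cluster map. The layering tree with the cluster map is in general only a $(1,3)$-quasi-isometry. Your proposed mechanism (``adjacent, or a common neighbour one level down'') is exactly what fails in this example.

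Convexity of balls cannot drive this step anyway. The triangular tiling of the plane is bridged, yet each sphere $S_r$ is a single cluster of diameter $2r$. What chordality really gives is this: for each component $K$ of $G-B_{r-1}(v)$, the set $Q_K=N(K)\subseteq S_{r-1}(v)$ is a clique. Indeed, two nonadjacent vertices of $Q_K$ would close an induced cycle of length at least $4$ with a shortest path through $K$ and a shortest path through $B_{r-2}(v)$; equivalently, minimal separators of chordal graphs are cliques. Hence clusters have diameter at most $3$, and $3$ is sharp by the example.

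\textbf{The missing idea is not to collapse clusters.} For every cluster $X=K\cap S_r(v)$ with $r\ge1$, choose one vertex $c(X)\in Q_K$ and make it the parent of \emph{every} vertex of $X$. This gives a tree $T$ on $V(G)$ whose edges have $G$-length at most $2$, i.e.\ a spanning tree of the square of $G$ rather than a quotient.

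\emph{Upper bound $d_T\le d_G+2$.} The $T$-ancestor of $x$ at level $s$ lies in the $\Gamma$-ancestor of $f(x)$ at level $s$, and all vertices of one cluster share their $T$-parent. So if the $\Gamma$-lowest common ancestor of $f(x),f(y)$ is the cluster $C$ at level $k$, the $T$-lowest common ancestor of $x,y$ is at level $k$ or $k-1$. Hence $d_T(x,y)\in\{d_\Gamma,d_\Gamma+2\}$, where $d_\Gamma=d_\Gamma(f(x),f(y))\le d_G(x,y)$ by the $1$-Lipschitz property. This gives $d_T\le d_G+2$.

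\emph{Lower bound $d_G\le d_T+2$.} Descend along geodesics from $x$ and $y$ to $v$, reaching vertices $x_0,y_0\in C$ at level $k$. Then $d_G(x,y)\le d_\Gamma+d_G(x_0,y_0)$. If the level-$k$ $T$-ancestors of $x$ and $y$ coincide, then $d_T=d_\Gamma$, and $x_0,y_0$ lie in cliques $Q$ containing that common ancestor (or equal to it), so $d_G(x_0,y_0)\le2$. Otherwise $d_T=d_\Gamma+2$ and $d_G(x_0,y_0)\le3$. Either way $|d_G-d_T|\le2$, with the identity map on $V(G)$, which is the lemma.

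Two smaller points. First, that $\Gamma$ is a tree holds in every graph: two lower neighbours of a cluster are joined through its component of $G-B_{r-1}(v)$, so no chord argument is needed. Second, in your upper-bound paragraph a path must reach level $k$, not $k+1$; that inequality is just the $1$-Lipschitz property of $f$.
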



A graph $G$ satisfies the \emph{triangle condition} (TC) if for  $v,x,y\in V$ with $d(v,x)=d(v,y)=k$ and $x\sim y$, there exists $z\sim x,y$ with $d(v,z)=k-1$. Bridged graphs satisfy (TC), in fact they satisfy the following stronger property:

\begin{lemma} \label{l:clique-in-the-sphere} \cite{JaSw} Let $G=(V,E)$ be a bridged graph, $x$ be any vertex of $G$, and $K$ be a clique of $G$ at uniform distance $k$ from $x$. Then the set of all vertices $y$ at distance $k-1$ from $x$ and adjacent to all vertices of $K$ is a nonempty clique of $G$.   
\end{lemma}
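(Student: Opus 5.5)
The statement is Lemma~\ref{l:clique-in-the-sphere}: for a clique $K$ at uniform distance $k\ge 1$ from $x$, the set $C$ of vertices at distance $k-1$ from $x$ adjacent to all of $K$ is a nonempty clique. I would use two facts about bridged graphs: convexity of balls (Theorem~\ref{bridged-convex-balls}, with $A=\{x\}$) and the absence of isometric cycles of length $\ge 4$.

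\emph{$C$ is a clique.} Take $y,y'\in C$ with $y\ne y'$, and pick any $u\in K$. Then $y,y'\in B_{k-1}(x)$ and $d(y,y')\le 2$, since both are adjacent to $u$. Suppose $y\nsim y'$. Then $d(y,y')=2$, so $u\in I(y,y')$. Convexity of $B_{k-1}(x)$ would then force $d(x,u)\le k-1$, contradicting $d(x,u)=k$. Hence $y\sim y'$.

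\emph{$C$ is nonempty.} I would argue by induction on $|K|$, with the finite case as the real content.

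For $|K|=1$ the claim is trivial: take the penultimate vertex of a geodesic from $x$ to $u$.

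For $|K|=2$, $K=\{u,v\}$, this is the triangle condition. Choose $a\sim u$ and $b\sim v$ with $d(x,a)=d(x,b)=k-1$. If $a\sim v$ or $b\sim u$ we are done. Otherwise, if $a=b$ we are also done, so assume $a\ne b$. By convexity of $B_{k-1}(x)$, and since $u\in I(a,v)$ would otherwise hold, we get $d(a,b)\le 2$. I would then prefer to get (TC) directly from convexity: apply Theorem~\ref{bridged-convex-balls} to the convex set $B_{k-1}(x)$ (more precisely, to the convex set $I$-closure) to force a common neighbour of $u$ and $v$ in $B_{k-1}(x)$. The cleanest version is to consider the metric projection, or to note that $B_k(x)\setminus$ things are convex. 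Concretely, I would take geodesics from $u$ and from $v$ to $x$ that together with $uv$ form a cycle, choose such a configuration of minimal total length, and use the fact that a shortest non-isometric cycle must have a shortcut. Bridgedness rules out isometric cycles of length $\ge 4$, and any shortcut either produces a common lower neighbour or shortens the configuration. This is the standard argument, and I expect it to be the main technical step.

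For the inductive step, let $K=K'\cup\{w\}$ and let $C'$ be the clique of lower common neighbours of $K'$, which is nonempty by induction. For $u\in K'$ and each $y\in C'$, suppose $y\nsim w$. Apply (TC) to the edge $uw$ to get $z\sim u,w$ at level $k-1$. Then show $z$ is adjacent to all of $K'$ using the clique argument above together with the no-induced-$C_4$/$C_5$ property in links. For $v\in K'$ with $z\nsim v$, the vertices $y,z,w,v,u$ yield a forbidden cycle in the link of $u$: $y$--$v$--$w$--$z$--$y$ is a $4$-cycle when $y\sim z$, which holds because $y,z\in C(\{u\})$. Its diagonals $y\nsim w$ and $z\nsim v$ make it an induced $4$-cycle in $\Lk(u)$, which is forbidden. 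Thus $z\sim v$, so $z\in C$.

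For infinite $K$ the statement is vacuous in the uniformly locally finite setting (or cliques are finite by flagness and finite dimension). The obstacle I anticipate is handling the $4$-cycle argument carefully, i.e.\ checking that $z$ can be chosen uniformly for all of $K'$; iterating over $v\in K'$ one at a time handles this.
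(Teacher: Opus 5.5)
The paper gives no proof of this lemma; it quotes it from \cite{JaSw}. Your route is a self-contained, purely graph-theoretic argument that uses only convexity of $B_{k-1}(x)$ and the fact that a bridged graph has no induced $4$- or $5$-cycles (such cycles are automatically isometric). The clique part is correct, and so is your inductive step. Fix one $y\in C'$ with $y\nsim w$, and take $z$ from (TC) on the edge $uw$. The clique part gives $y\sim z$, and for each $v\in K'$ the cycle $y v w z$ would be an induced $4$-cycle unless $z\sim v$. The same $z$ works for every $v$, so the ``uniform choice'' issue you anticipate does not arise.

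\textbf{The gap: the case $|K|=2$, i.e.\ (TC).} You call this the main technical step but do not prove it. The paragraph about minimal configurations of geodesics and shortcuts is not an argument: nothing shows that a shortcut yields either a common lower neighbour or a shorter configuration. Your reason for $d(a,b)\le 2$ is also misstated. The point is that $d(a,b)=3$ would make $a u v b$ a geodesic, putting $u\in I(a,b)\subseteq B_{k-1}(x)$.

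You can close the case with the tools you already set up.
\begin{enumerate}
\item If $a\sim b$, then $a u v b$ is an induced $4$-cycle, which is impossible.
\item If $d(a,b)=2$, take a common neighbour $c$. Convexity gives $c\in B_{k-1}(x)$, so $c\ne u,v$.
\item If $c$ is adjacent to neither $u$ nor $v$, then $a u v b c$ is an induced $5$-cycle.
\item If $c$ is adjacent to exactly one of them, then $c u v b$ or $a u v c$ is an induced $4$-cycle.
\end{enumerate}
Hence $c\sim u,v$, and $d(x,c)=k-1$ because $c\sim u$. Alternatively, you could simply quote (TC), since the paper treats it as a known property of bridged graphs.

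\textbf{Finiteness of $K$.} Restricting to finite $K$ is necessary, not just convenient, and flagness does not supply it. Here is a counterexample for infinite $K$:
\begin{itemize}
\item let $x$ be adjacent to every vertex of a clique $\{y_1,y_2,\dots\}$;
\item add a second clique $\{w_1,w_2,\dots\}$;
\item put $y_i\sim w_j$ if and only if $j\le i$.
\end{itemize}
This graph is chordal, hence bridged. The vertex set is covered by two cliques, and the nested neighbourhoods rule out induced $4$-cycles. The clique $\{w_j\}$ is at uniform distance $2$ from $x$, yet no vertex at distance $1$ is adjacent to all of it. The paper only applies the lemma to edges and triangles, so the finite case is all that is needed.
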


For two vertices $u, v\in V$  with $d(u,v)=\ell$ and $0\le k\le \ell$ 
the \emph{$k$-level} $L_k(u,v)$ of the interval $I(u,v)$ is the subgraph of $G$ induced by all  $x\in I(u,v)$ such that $d(u,x)=k$ and $d(x,v)=\ell-k$. Convexity of balls (Theorem \ref{bridged-convex-balls}) implies that bridged graphs also satisfy the following property (called the \emph{interval neighbourhood condition} and abbreviated (INC) \cite{ChChGi}): the neighbours of $u$ in  $I(u,v)$ induce a clique of $G$; in other words the level $L_1(u,v)$ in $I(u,v)$ is a clique. 

The following result was proved in \cite[Lemma 8.4, Corollary 8.6]{OsPr} (and was recently independently reproved by Steven Seif).
 It implies---via Lemma~\ref{chordal-quasi-tree}---that level sets in bridged graphs are $(1,2)$-quasi-trees (see Figure~\ref{f:fit} left for a schematic picture of such level set). We present a simple alternative proof.

\begin{proposition} \label{levels-are-chordal} For a bridged graph $G$ and $u,v\in V$, each level $L_k(u,v)$ of $I(u,v)$ is a convex chordal subgraph of $G$ not containing induced 3-fans. 
\end{proposition}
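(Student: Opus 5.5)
We establish three properties in turn: convexity of $L_k(u,v)$, chordality, and absence of induced $3$-fans. Throughout we use Theorem~\ref{bridged-convex-balls}: balls $B_k(u)$ and $B_{\ell-k}(v)$ are convex, hence so is $I(u,v)=B_k(u)\cap B_{\ell-k}(v)$ intersected over all $k$.

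\emph{Convexity.} Let $x,y\in L_k(u,v)$ and $z\in I(x,y)$. Since $B_k(u)$ and $B_{\ell-k}(v)$ are convex and both contain $x,y$, we get $d(u,z)\le k$ and $d(z,v)\le \ell-k$. The triangle inequality gives $d(u,z)+d(z,v)\ge \ell$, so both inequalities are equalities. Thus $z\in L_k(u,v)$. In particular, $L_k(u,v)$ is an isometric subgraph, and hence itself bridged.

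\emph{Chordality.} Suppose $L_k(u,v)$ contains an induced cycle $C$ of length $n\ge 4$. We argue by a disc/area minimality argument, or more directly as follows. Since $L_k$ is convex, hence bridged, $C$ is not isometric. We want a contradiction with the structure towards $u$. For each edge $xy$ of $C$, Lemma~\ref{l:clique-in-the-sphere} (applied to the clique $\{x,y\}$ at uniform distance $k$ from $u$) gives a common neighbour at level $k-1$. Symmetrically, applying it with respect to $v$ gives a common neighbour at level $k+1$; both of these lie in $I(u,v)$. Cleaner is to use the two-sided convexity. Since $C$ is induced and not isometric, there are $x,y\in C$ with a shortcut. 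Choosing a suitable one yields a shorter cycle, but we must keep it induced in $L_k$. So the real approach is to take a minimal induced cycle $C$ in $L_k$ of length $\ge4$, which is then isometric in $L_k$. Since $L_k$ is isometric in $G$, $C$ is isometric in $G$, contradicting bridgedness when $|C|\ge4$.

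More carefully: a minimal-length induced cycle of length $\ge 4$ need not be isometric. However, if it were not isometric, a geodesic between two antipodal-ish vertices, lying in $L_k$ by convexity, together with arcs of $C$ produces a shorter cycle of length $\ge 4$ with no chords. Extracting an induced one of length $\ge4$ from such a configuration is the delicate point. I expect this to be the main obstacle. My first attempt would be to reduce to the case of $4$- and $5$-cycles, using that in bridged graphs any induced cycle of length $\ge 4$ in a convex subgraph forces an induced $C_4$ or $C_5$ in some link. Then rule these out with the two apexes. For an induced $4$- or $5$-cycle $C\subseteq L_k$, cone it using Lemma~\ref{l:clique-in-the-sphere}. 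Vertices $w$ with $d(u,w)=k-1$ adjacent to edges of $C$ form a closed walk at level $k-1$; filling by induction on $k$ shows that a single vertex $w$ at level $k-1$ adjacent to all of $C$ can be found. Then $C$ is an induced $4$/$5$-cycle in $\Lk(w)$, contradicting the local characterization of systolic complexes.

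\emph{No induced $3$-fan.} Suppose $p$ is adjacent to a path $a_0a_1a_2a_3$ in $L_k$ forming an induced $3$-fan, so that $a_0\nsim a_2$, $a_1\nsim a_3$, and $a_0\nsim a_3$. Note $d(a_0,a_3)=2$ via $p$. The two intervals $I(a_0,a_3)$ and the neighbourhood of $p$ should be compared with the $u$-side and $v$-side. Take a vertex $w$ at level $k-1$ adjacent to $p,a_1,a_2$, obtained by Lemma~\ref{l:clique-in-the-sphere} on the triangles $pa_1a_2$. Take $w'$ at level $k+1$ similarly. Then $w,w'$ are at distance $2$ with common neighbours $p,a_1,a_2$. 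Then use that $L_1(w,w')$ is a clique (INC) together with $a_0,a_3$ to produce an induced $4$- or $5$-cycle in the link of $p$, e.g. $a_0,a_1,w',a_2,a_3$ plus $w$. This yields a contradiction with the systolic link condition. This is a finite case check.
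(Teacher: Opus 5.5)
Your convexity paragraph is correct and matches the paper. The chordality part and the $3$-fan part both have genuine gaps.

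\textbf{Chordality.} You propose the reduction that an induced cycle of length $\ge 4$ in a convex subgraph of a bridged graph forces an induced $C_4$ or $C_5$ in some link. This is false. In any graph an induced $4$- or $5$-cycle is isometric, so a bridged graph contains none, in links or anywhere else. Your claim would therefore make every convex subgraph of a bridged graph chordal. Yet the wheel $W_6$ (a hexagon coned off) is bridged, convex in itself, and its rim is an induced non-isometric $6$-cycle. So your $4$/$5$-cycle case is vacuous. The real case, long induced cycles, rests entirely on the unproved assertion that ``filling by induction'' yields a single vertex at level $k-1$ adjacent to all of $C$.

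The missing ingredient is the paper's second claim: a bridged graph $H$ with an induced cycle $C$ of length $>3$ contains an induced $3$-fan. Take a closest pair $a,b\in C$ with $d_H(a,b)<d_C(a,b)$, and let $x,y$ be the next two vertices after $a$ on the shorter arc to $b$. By minimality $d(x,b)=d_C(x,b)$, so $y\in I(x,b)$, and (INC) rules out $a\in I(x,b)$; hence $d(b,a)=d(b,x)$. Then (TC) gives $z\sim a,x$ one step closer to $b$, (INC) gives $z\sim y$, and (TC) gives $w\sim y,z$ one step closer still. Now $z$ is the apex of an induced fan on $a,x,y,w$. Since $L_k(u,v)$ is bridged, chordality reduces to excluding $3$-fans.

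\textbf{No $3$-fans.} Your argument does not achieve this. Take $w$ at distance $k-1$ from $u$ and $w'$ at distance $k+1$, both over the middle triangle $pa_1a_2$. Convexity of $L_k(u,v)$ gives $w,w'\nsim a_0,a_3$, and $w\nsim w'$. So in $\Lk(p)$ the set $\{a_0,a_1,a_2,a_3,w,w'\}$ induces the path $a_0a_1a_2a_3$ plus two vertices each adjacent exactly to $a_1,a_2$. This is a chordal graph with no induced $4$- or $5$-cycle, and $a_0,a_1,w',a_2,a_3$ is not a cycle at all. Coning it off by $p$ gives a bridged graph, so no case check on these vertices can yield a contradiction. (INC) for the pair $w,w'$ only repeats that $p,a_1,a_2$ are pairwise adjacent.

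What works is to use the two \emph{end} triangles, on one side only:
\begin{enumerate}
\item Lemma~\ref{l:clique-in-the-sphere} gives $y\sim p,a_0,a_1$ and $y'\sim p,a_2,a_3$, both at distance $k-1$ from $u$.
\item Convexity of $L_k(u,v)$ gives $y\nsim a_2,a_3$ and $y'\nsim a_0,a_1$, so $y\ne y'$.
\item Convexity of $B_{k-1}(u)$, with $p$ a common neighbour at distance $k$, forces $y\sim y'$.
\item Then $y\,a_1\,a_2\,y'$ is an induced $4$-cycle, which is impossible in a bridged graph.
\end{enumerate}
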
 

\begin{proof} $L_k(u,v)$ is convex as the intersection of the convex balls 
$B_k(u)$ and $B_{\ell-k}(v)$. Hence $L_k(u,v)$ is bridged. Now, the proposition follows from the following two claims.

\begin{claim} 
$L_k(u,v)$ does not contain $3$-fans.
\end{claim}
\begin{proof}
Suppose not, and consider a $3$-fan $F\subseteq L_k(u,v)$ defined by the 
3-path $P=(p,q,q',p')$ and a vertex $x$ adjacent to 
all vertices of $P$. Applying 
Lemma \ref{l:clique-in-the-sphere} to the  
triangles $x,p,q$ and $x,p',q'$ of $F$, 
we deduce 
that there exist vertices $y\sim p, q, x$ and 
$y'\sim x,p',q'$ 
at distance $k-1$ from $u$. Since $F$ is induced 
and the balls 
of $G$ are convex, the vertices $y$ and $y'$ are different 
and $y\nsim p',q'$ and $y'\nsim p, q$. Since $y,y'$ 
are both adjacent
to $x$, the convexity of the ball $B_{k-1}(u)$ implies 
that $y\sim y'$. But then the vertices $y,y',q',q$ 
induce a forbidden $4$-cycle. This shows that $L_k(u,v)$ does 
not contain $3$-fans.
\end{proof}
%

\begin{claim} 
If a bridged graph $H$ contains an induced cycle $C$ of length $>3$, then $H$ contains an induced $3$-fan.
\end{claim} 

\begin{proof}
Since $C$ has length $>3$, $C$ is not isometric. Let $a,b$ be a closest pair of vertices of $C$ such
that $d_H(a,b)<d_C(a,b)$. Let $x$ be a neighbour of $a$ on a shortest path $P$ among the two $(a,b)$-paths of $C$. By the choice of $a,b$, $d_H(b,x)=d_C(b,x)$. Let $y$ be the second neighbour of $x$ in $P$. Then $y\in I(x,b).$ 
If $a\in I(x,b)$, by (INC), $y\sim a$ and we get a contradiction with the assumption that $C$ is induced. Thus
$d_H(b,x)=d_H(b,a)$. By (TC), there exists $z\sim x,a$ one step closer to $b$ than $a,x$. By (INC), $z\sim y$
and by (TC) there exists $w\sim y, z$ one step closer to $b$ than $y,z$. We assert that $a,x,z,y,w$ induce a 3-fan.
From the choice of $a,x$ and the definition of $z$ and $w$, we deduce that $w$ cannot be adjacent $a$
or $x$. Since $C$ is induced, $a$ and $y$ are not adjacent. This establishes the claim. 
\end{proof} 
\end{proof}

\begin{definition}[Fit]\label{def:fit}
    A systolic complex $X$ is \emph{fit} if there exist $B,C>0$ such that every level (in every interval) is $(B,C)$-quasi-isometric to a segment.
\end{definition}

\begin{figure}[h!]
    \begin{center}
	\includegraphics[scale=0.2]{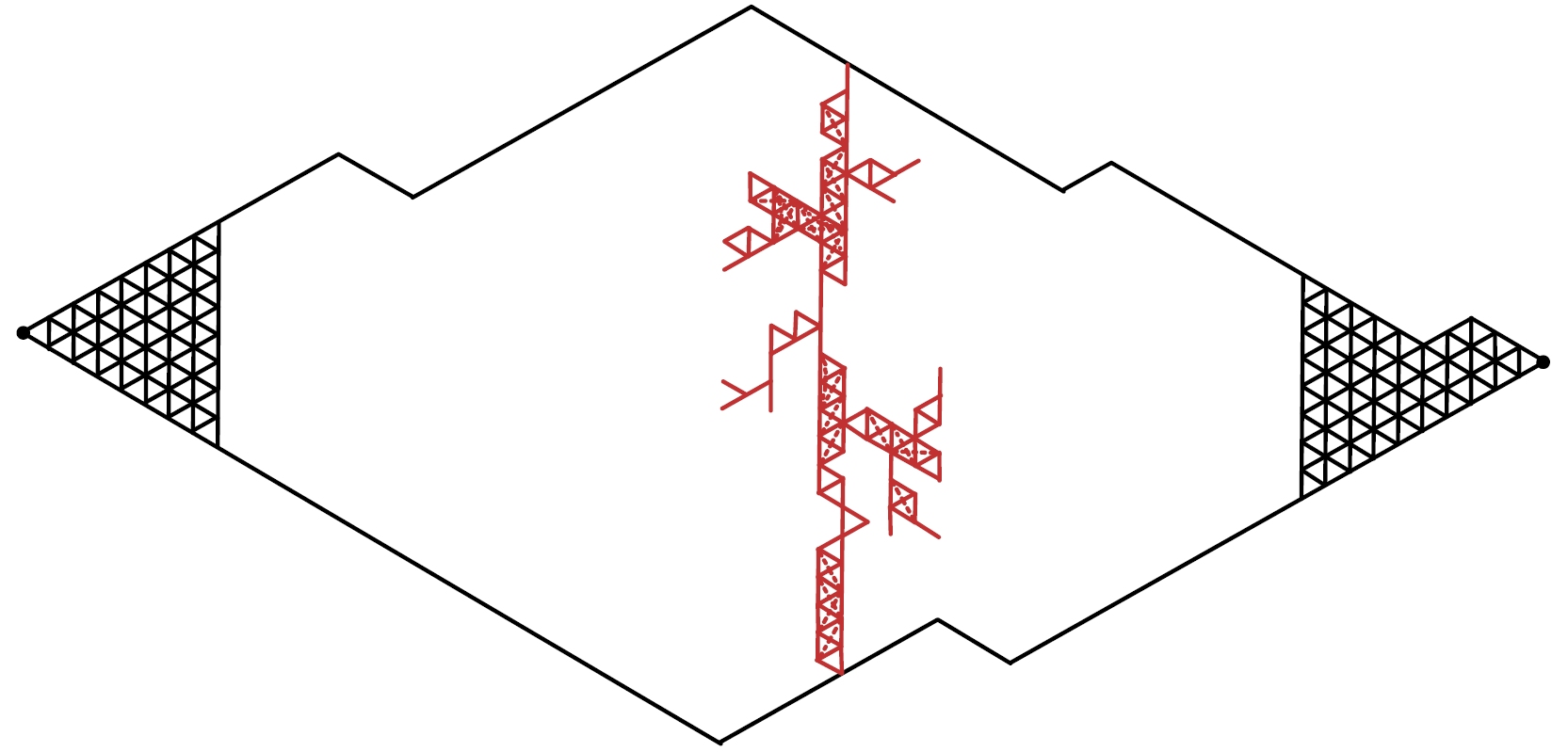}
    \end{center}
    \caption{A level set (red) in an interval in a systolic complex/bridged graph. Being ``fit'' means that level sets are ``not too large''.}
    \label{f:fit}
\end{figure}	

\begin{remark}
    \label{rem:2dim}
    Observe that $2$-dimensional systolic complexes are fit.
    In fact, their intervals embed isometrically into a systolic flat plane \cite[Lemma 3]{ChLaRa}.
    Note that the fit systolic complexes associated (below) to large-type Artin groups (Section~\ref{sec:Artinfit}) as well as $C(3)$--$T(6)$ (Section~\ref{sec:c3t6}), and $C(6)$ groups (Section~\ref{sec:c6}) are in general not $2$-dimensional.

\end{remark}

\begin{remark}
    \label{rem:generalfit}
    It is worth mentioning that for our main theorem to hold, one can weaken the definition of fit systolic complex.
    It is enough to have a point $x_0\in X$ and a polynomial $p$ such that for every $x\in X$ any ball of radius $r$ in any level set of $I(x,x_0)$ has at most $p(r)$ vertices. In the case of the main examples in this paper---in Sections~\ref{sec:Artinfit},~\ref{sec:c3t6},~\ref{sec:c6}, and ~\ref{sec:54547272}---as well as for $2$-dimensional systolic complexes (see Remark~\ref{rem:2dim} above) the stronger property from Definition~\ref{def:fit} holds.
\end{remark}

\subsection{Characterizations of fit systolicity}\label{sec:charact_fit}

We present two more structural characterizations of fit systolic complexes (Proposition~\ref{prop:equivalent_definition}) and a following condition for fitness (Corollary~\ref{c:fit_condition}) that are easier to check. We use them in Sections~\ref{sec:Artinfit}, \ref{sec:c6}, and \ref{sec:54547272}. First we need some definitions.
Take a sequence of tetrahedra
$\sigma_i = \sigma(v_i, a_i, b_i, c_i)$ and $\sigma_i' = \sigma(v_{i-1}, a_i, b_i, c_i)$, for $i \in \mathbb{Z}$ and glue ``zipped'' systolic half-planes 
along the geodesics $\alpha$, $\beta$, $\gamma$, spanned by the sets of vertices $\{v_i, a_i: i \in \mathbb{Z}\}$, $\{v_i, b_i: i \in \mathbb{Z}\}$, and $\{v_i, c_i: i \in \mathbb{Z}\}$, as in Figure \ref{f:triplane}, and call the resulting systolic complex a \emph{triplane}. 
An \emph{$n$-arrowhead} is the graph obtained by looking at the interval $I(v_0,v_n)$ between $v_0$ and $v_n$ in a triplane (see Figure \ref{f:triplane}). The sequence of tetrahedra is called the \emph{axis} of the arrowhead (respectively, of the triplane). 

\begin{figure}[h!]
    \begin{center}
	\includegraphics[trim={0cm 0 0cm 3.5cm},clip,scale=0.6]{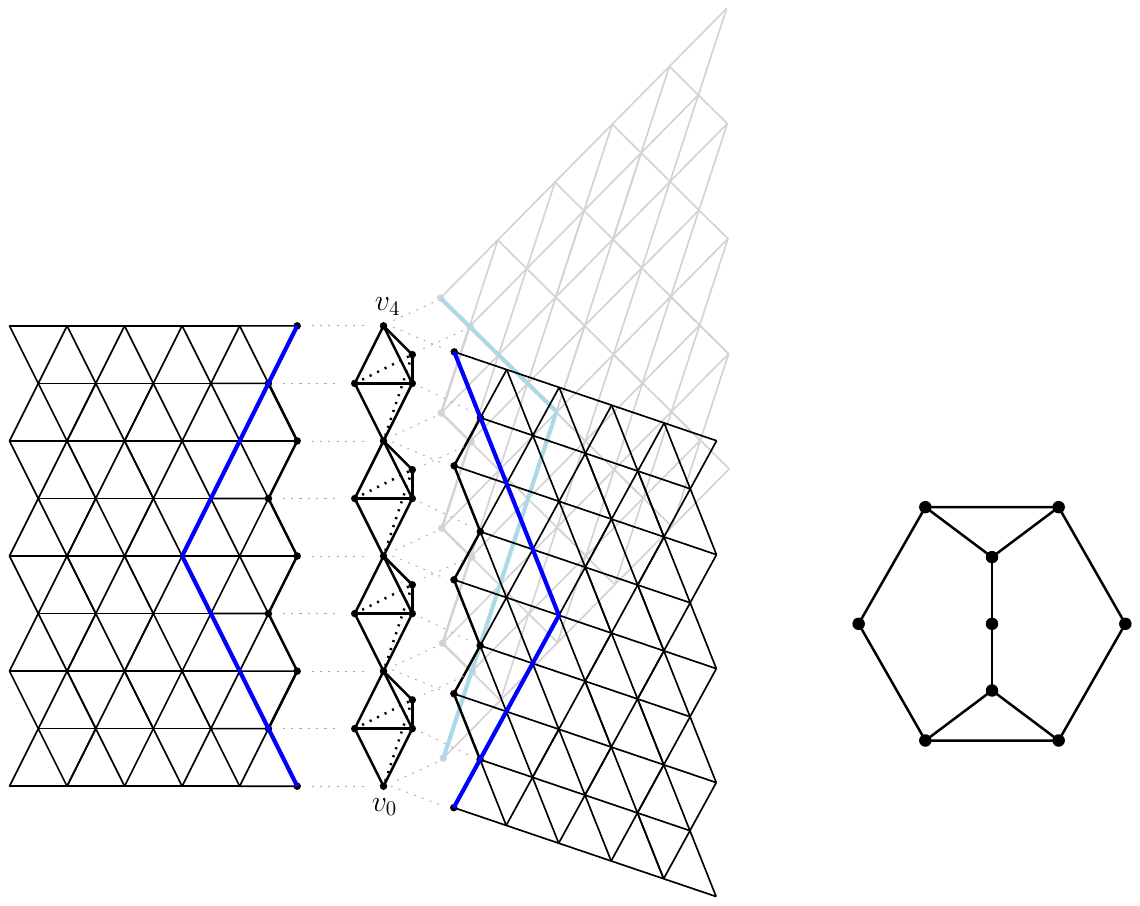}
    \end{center}
    \caption{On the left: a triplane with half-planes glued in a ``zipped'' way along a sequence of tetrahedra. A $4$-arrowhead is outlined in thick blue. On the right: the link of a vertex $v_i$, a doubled asteroid.}
    \label{f:triplane}
\end{figure}	

Given $N\in\mathbb{N}$ we say that a graph is \emph{$N$-branchless} if it does not contain isometrically embedded $n$-tripods or $n$-asteroids with $n>N$; and \emph{$M$-harmless} if it does not contain isometrically embedded $m$-arrowheads with $m>M$.

\begin{lemma}\label{lem:tripod-asteroid}
    Let $u,v$ be two vertices of a systolic complex $X$, with $d(u,v)=\ell$. Let $1\leq k \leq \ell-1$.
    \begin{enumerate}
        \item \label{lem:tripod-asteroid_1} If the level set $L_k(u,v)$ contains an isometrically embedded $n$-tripod then $L_{k+1}(u,v)$ contains an isometrically embedded $(n-1)$-asteroid.
        \item \label{lem:tripod-asteroid_2} If the level set $L_k(u,v)$ contains an isometrically embedded $n$-asteroid then $L_{k+1}(u,v)$ contains an isometrically embedded $n$-tripod. 
    \end{enumerate}
\end{lemma}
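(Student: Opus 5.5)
We push the isometrically embedded configuration one level up, towards $v$, using the ``triangle condition'' in the direction of $v$. By symmetry of the interval, $L_k(u,v)=L_{\ell-k}(v,u)$, so Lemma~\ref{l:clique-in-the-sphere} applies with base vertex $v$. For every clique $K\subseteq L_k(u,v)$, the set of vertices at distance $\ell-k-1$ from $v$ adjacent to all of $K$ is then a nonempty clique. Any such vertex lies at distance $\le k+1$ from $u$, and also at distance $\ge k+1$ from $u$ by the triangle inequality. So this clique lies in $L_{k+1}(u,v)$.

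\textbf{Tools.} We write $P_e$ for this clique attached to an edge $e$. Two facts will be used repeatedly.

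\begin{itemize}
\item[(a)] If $e,e'$ are edges of $L_k(u,v)$ sharing a vertex $x$, then every vertex of $P_e$ is adjacent or equal to every vertex of $P_{e'}$. Indeed, all these vertices lie in $N(x)$ at distance $\ell-k-1$ from $v$. By convexity of $B_{\ell-k-1}(v)$ (Theorem~\ref{bridged-convex-balls}), two such vertices cannot be at distance $2$ through $x$.
\item[(b)] If two vertices of $L_k(u,v)$ are at distance $d$ in $L_k(u,v)$, then their chosen neighbours in $L_{k+1}(u,v)$ are at distance $\ge d-2$. This is the triangle inequality. Moreover, by convexity of the levels (Proposition~\ref{levels-are-chordal}), distances in $L_{k+1}(u,v)$ are distances in $G$.
\end{itemize}

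\textbf{Part (1).} Let the $n$-tripod have centre $c$ and legs $c=x^j_0\sim x^j_1\sim\dots\sim x^j_n$ for $j=1,2,3$. Choose $y^j_i\in P_{x^j_{i-1}x^j_i}$ for $i\ge1$, in a consistent way along each leg. By (a), consecutive $y^j_i,y^j_{i+1}$ are adjacent or equal, and $y^1_1,y^2_1,y^3_1$ are pairwise adjacent.

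They are distinct. If, say, $y^1_1=y^2_1$, this vertex would be adjacent to $x^1_1,c,x^2_1$. Then $x^1_1$ and $x^2_1$, which are at distance $2$ in $L_k$, would both be adjacent to a vertex of $B_{\ell-k-1}(v)$. That contradicts the INC-type property that $L_1(x,v)$ is a clique only if their common neighbour in $I$ is $c$ — more precisely, it forces a $4$-cycle $x^1_1,c,x^2_1,y$. We need this to be excluded, or else to produce a contradiction with the isometric embedding. This is one point to check carefully.

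The paths $y^j_1,\dots,y^j_n$ then give the legs, of length $n-1$, of an asteroid with central triangle $y^1_1y^2_1y^3_1$. Isometry follows from (b) together with the fact that the tripod tips are at mutual distance $2n$: it yields $d(y^j_n,y^{j'}_n)\ge 2n-2 = 2(n-1)+1-1$. An upper bound comes from the path, and we then need a parity/exactness argument. I expect to use strong equilaterality of metric triangles (Lemma~\ref{strongly-equilateral}) to get exact distances instead of $\pm 2$ slack. In particular, legs must not collapse (consecutive $y$'s distinct). This follows because a collapse would give a $3$-fan-like configuration, or force two tripod vertices at distance $2$ along a leg to be distance-$1$ apart via chordality.

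\textbf{Part (2).} Let the asteroid have central triangle $abc$ and legs from $a,b,c$. Take the nonempty clique $P_{abc}\subseteq L_{k+1}(u,v)$ adjacent to the whole triangle, and pick $z\in P_{abc}$ as the centre. Build the legs as before from the cliques $P$ of consecutive leg edges, starting from $z$. By (a), $z$ is adjacent to the first vertex of each leg. Each leg then has length $n$: one step from $z$ plus $n-1$ further steps. This matches a count of $n$ edges per external asteroid leg, shifted by one.

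The tips of the $n$-asteroid are at distance $2n+1$. By (b), their images are at distance $\ge 2n-1$, while the $n$-tripod requires $2n$. Again the slack must be removed.

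\textbf{Main obstacle.} The hard step is showing exact isometry, not just isometry up to an additive $2$. My plan is as follows. Suppose two leg-end images are closer than expected. Then, with the original tips, they form a short cycle spanning levels $k$ and $k+1$. Using bridgedness, this cycle has a chord pattern contradicting the isometric embedding in $L_k$. Concretely, I would apply Lemma~\ref{l:clique-in-the-sphere} towards $u$ to the shortcut path in $L_{k+1}$, to pull it back to a path in $L_k$ of length at most equal plus at most $0$. Here I would use convexity of $B_k(u)$, so that vertices two apart in $L_{k+1}$ with a common neighbour pull back without loss.
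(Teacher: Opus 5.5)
Your construction matches the paper's. You lift each edge to $L_{k+1}(u,v)$ by (TC) towards $v$, lift the central triangle in (2) by Lemma~\ref{l:clique-in-the-sphere}, and get adjacency of new vertices with a common old neighbour from convexity of $B_{\ell-k-1}(v)$; your fact (a) is correct. The gap is that everything the lemma needs beyond the construction is left open: distinctness of $y^1_1,y^2_1,y^3_1$, non-collapse of the legs, and exact isometry. The justifications you sketch do not work:
\begin{itemize}
\item The $4$-cycle $x^1_1,c,x^2_1,y$ you produce for distinctness has the chord $cy$, so bridgedness rules nothing out.
\item The appeals to parity, strong equilaterality, $3$-fans and chordality are not arguments.
\item Your tool (b) loses $2$, and you never actually remove that slack.
\end{itemize}

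The missing observation follows in one line from convexity of $B_k(u)$ (Theorem~\ref{bridged-convex-balls}). If $p,q\in L_{k+1}(u,v)$ and $x,x'\in L_k(u,v)$ with $p\sim x$ and $q\sim x'$, then $d(p,q)\ge d(x,x')-1$. Otherwise the walk $x,p,\dots,q,x'$ has length at most $d(x,x')$, so $p\in I(x,x')\subseteq B_k(u)$, contradicting $d(u,p)=k+1$.

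Each new vertex has two old neighbours (the centre $z$ in (2) has three), and you apply this to the pair maximising $d(x,x')$.
\begin{itemize}
\item In (1): $d(y^j_i,y^j_{i'})\ge d(x^j_{i-1},x^j_{i'})-1=i'-i$ for $i<i'$, and $d(y^j_i,y^{j'}_{i'})\ge d(x^j_i,x^{j'}_{i'})-1=i+i'-1$ for $j\ne j'$.
\item In (2), write $a=a_0,\dots,a_n$ for the leg at $a$ and $p^a_i$ for the lift of $a_{i-1}a_i$. Then $d(z,p^a_i)\ge d(b,a_i)-1=i$ (using $a$ here would give only $i-1$), $d(p^a_i,p^a_{i'})\ge i'-i$, and $d(p^a_i,p^b_{i'})\ge d(a_i,b_{i'})-1=i+i'$.
\end{itemize}
These are exactly the target distances, and they are positive for distinct target vertices, which gives distinctness and non-collapse. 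Fact (a) then makes consecutive vertices adjacent, the constructed paths give matching upper bounds, and the embedding is isometric.

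Your pull-back idea via Lemma~\ref{l:clique-in-the-sphere} towards $u$ could be made to work. Carried out, it proves precisely $d(x,x')\le d(p,q)+1$, which is the same inequality, not the loss-free bound you state. As written, though, it is only a plan.
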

\begin{proof}
    We begin by proving item (\ref{lem:tripod-asteroid_1}). 
    Let $T$ be an $n$-tripod in $L_k(u,v)$ with centre $v_0$ and neighbours $x,y,z$ of $v_0$ in $T$.
    By the triangle condition (TC), applied to $v$ and each of the edges $xv_0$,$yv_0$, and $zv_0$, we find three vertices $a_1,b_1,c_1\in L_{k+1}(u,v)$ such that $a_1\sim x,v_0$, $b_1\sim y,v_0$, and $c_1\sim z,v_0$.
    Since $x,y,z$ are not adjacent and $L_k(u,v)$ is convex, the vertices $a_1,b_1,c_1$ are pairwise different.
    Since $a_1,b_1,c_1\sim v_0$ and $L_{k+1}(u,v)$ is convex, $a_1,b_1,c_1$ are pairwise adjacent, so we obtain a triangle.
    Now we apply (TC) to $v$ and the edges of remaining three paths of length $n-1$ of $T$ (with ends at $x,y,$ and $z$) to derive new vertices in $L_{k+1}(u,v)$.
    By convexity of $L_{k+1}(u,v)$ these vertices are pairwise distinct and induce three paths, which together with the triangle $a_1b_1c_1$ define an $(n-1)$-asteroid $A$ in $L_{k+1}(u,v)$.
    Since the level $L_{k+1}(u,v)$ is convex and the $n$-tripod $T$ is isometric, the  $(n-1)$-asteroid $A$ is also isometrically embedded in $X$.

    The proof of item \ref{lem:tripod-asteroid_2} is similar, the only difference being that at the first step, instead of (TC), we apply Lemma \ref{l:clique-in-the-sphere} to $v$ and the triangle of the $n$-asteroid.
    Then just as before we obtain an isometrically embedded $n$-tripod in $L_{k+1}(u,v)$.
\end{proof}

\begin{proposition}\label{prop:equivalent_definition}
    Let $X$ be a systolic complex. Then the following are equivalent:
    \begin{enumerate}
        \item $X$ is fit;
        \item There exists $N\in \mathbb{N}$ such that all level sets of all intervals are $N$-branchless.
        \item There exists $M\in \mathbb{N}$ such that $X$ is $M$-harmless.
    \end{enumerate}
    Furthermore, if all intervals are $N$-branchless then all level sets are  $(1,2N+2)$-quasi-isometric to segments.
\end{proposition}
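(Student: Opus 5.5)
We will prove the implications $(1)\Rightarrow(3)\Rightarrow(2)\Rightarrow(1)$, and along the way obtain the quantitative statement from the proof of $(2)\Rightarrow(1)$.

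\emph{$(2)\Rightarrow(1)$ and the quantitative claim.} By Proposition~\ref{levels-are-chordal}, each level $L=L_k(u,v)$ is a convex chordal subgraph of $X$, so distances in $L$ agree with those in $X$. Chordal graphs are quasi-trees (Lemma~\ref{chordal-quasi-tree}). We will use a more hands-on argument, though. Pick $a,b\in L$ realizing (or nearly realizing) the diameter of $L$, choose a geodesic $\gamma\subseteq L$ from $a$ to $b$, and map $\gamma$ isometrically onto a segment. The claim to establish is that every $x\in L$ lies within distance $N+1$ of $\gamma$. If $d(x,\gamma)=m$ is large, take a quasi-median of $x,a,b$ inside $L$ (which is bridged, being convex). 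By Lemma~\ref{strongly-equilateral} the metric triangle is strongly equilateral. We then need the chordal/no-3-fan structure of $L$ to force metric triangles of size at most $1$ in $L$. The point is that a strongly equilateral triangle of size $\ge 2$ in a bridged graph yields an induced cycle longer than $3$ inside its convex hull, or a $3$-fan, neither of which is allowed. The quasi-median is therefore a single vertex or a triangle, and the three geodesics to $x,a,b$ form an isometric $n$-tripod or $n$-asteroid with $n\ge m-1$. Checking isometry uses the quasi-median equalities. Hence $m\le N+1$. Projecting each $x$ to a nearest point of $\gamma$ then gives a $(1,2N+2)$-quasi-isometry onto the segment.

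\emph{$(3)\Rightarrow(2)$.} Suppose some level $L_k(u,v)$ contains an isometric $n$-tripod with $n$ huge. Lemma~\ref{lem:tripod-asteroid} applies alternately in both directions: toward $v$ with the lemma as stated, and toward $u$ by the symmetric version obtained by swapping $u$ and $v$. Each step produces tripods and asteroids in successive levels, losing at most one unit of size every two steps. Stacking these configurations across roughly $n/2$ consecutive levels, the central triangles and centres should assemble into a sequence of tetrahedra $\sigma_i,\sigma'_i$ as in the triplane. The TC-vertices of consecutive levels form the axis $v_i$, and the arm paths sweep out the three zipped half-planes. This yields an isometrically embedded $m$-arrowhead with $m\approx n/2$, contradicting $M$-harmlessness. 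Isometry of the arrowhead follows from convexity of the levels, of balls (Theorem~\ref{bridged-convex-balls}), and of the interval. Asteroids are handled in the same way after one application of Lemma~\ref{lem:tripod-asteroid}(2).

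\emph{$(1)\Rightarrow(3)$.} An $m$-arrowhead is itself an interval $I(v_0,v_m)$ in the triplane. If it embeds isometrically in $X$, its middle level is contained in the corresponding level of $I(v_0,v_m)$ in $X$, and convexity makes this an isometric embedding. The middle level of the arrowhead contains an isometric $\lfloor m/2\rfloor$-tripod or asteroid, as one sees by reading off the three half-planes. A large isometric tripod is not $(B,C)$-quasi-isometric to a segment for fixed $B,C$ once its size is large, by the standard fact that three far-apart tips cannot be arranged coarsely on a line. Hence $m$ is bounded.

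The main obstacle is $(3)\Rightarrow(2)$: showing that the stacked tripods and asteroids really form an isometric arrowhead, and in particular that the axis vertices satisfy the required adjacencies. I would first try to prove this by induction on the level index, using Lemma~\ref{l:clique-in-the-sphere} to obtain each next tetrahedron and convexity of $L_{k\pm1}$ to rule out extra edges.
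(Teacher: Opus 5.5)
\textbf{There is a genuine gap in $(3)\Rightarrow(2)$, which is the heart of the proposition.}

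Your $(1)\Rightarrow(3)$ is fine; it merges the paper's two easy directions $(1)\Rightarrow(2)$ and $(2)\Rightarrow(3)$. Your $(2)\Rightarrow(1)$ is essentially the paper's argument, but two details are missing:
\begin{itemize}
\item To get an arm of length about $m$, you need that every vertex of $I(a,b)$ is within distance $1$ of $\gamma$. This is Claim~\ref{c:distance-interval}, and it uses chordality of the level. In the asteroid case you also lose one more unit.
\item You need $a,b$ to be an \emph{exact} diametral pair, not a near one. Diametrality is what makes the arms toward $a$ and $b$ at least as long as the arm toward $x$.
\end{itemize}
Both are easy to repair.

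Building the configuration $Y$ is the easy part. You iterate Lemma~\ref{lem:tripod-asteroid} and use Lemma~\ref{l:clique-in-the-sphere} for the axis tetrahedra. Convexity of the levels then shows that each slice $Y_i=Y\cap L_i(u,v)$ is isometric. It also shows that $Y$ is an \emph{induced} subgraph: a vertex over the edge $pq$ cannot also see a slice vertex $r$ with $d(p,r)=2$.

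What must be proved, however, is that $d_X(x,y)=d_Y(x,y)$ for $x\in Y_i$, $y\in Y_j$ with $i<j$. Here $d_Y(x,y)=(j-i)+h$, where $h$ is the distance in $Y_i$ from $x$ to the ``shadow'' of $y$. Convexity of levels, balls and intervals only gives $d_X(x,y)\ge j-i$. It does not exclude shortcuts through vertices of $L_{i+1},L_{i+2},\dots$ that lie outside $Y$. Concretely, let $z$ be the last shadow vertex and $w\in Y_i$ its neighbour outside the shadow. Nothing in your list prevents $d(w,y)=j-i$. In that case (TC) yields $s\notin Y$ with $s\sim z,w$ and $d(s,y)=j-i-1$, and convexity alone never produces a contradiction from this.

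Your closing plan (tetrahedra via Lemma~\ref{l:clique-in-the-sphere}, convexity of $L_{k\pm1}$ to exclude extra edges) again yields only inducedness, not isometry. So the claim ``isometry follows from convexity'' is unsupported exactly where the work lies.

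The paper handles this by a double induction on $j-i$ and on $d_Y(x,y)$, with two cases according to whether $x$ is adjacent to the nearest shadow vertex $z$:
\begin{itemize}
\item If $x\nsim z$, it builds vertices $s,s'\notin Y$ and exhibits an induced $5$-cycle $ss'q_2x_2x_1$.
\item If $x\sim z$, it follows a hypothetical short path $p_0p_1\dots p_\ell$ alongside the side $\alpha$ of $\Delta(y,z,w)$. It propagates non-induced $4$-cycles down through $\Delta(y,z,w)$ until $p_1\sim q'$, hence $x\sim q'$. This yields a path of the same length inside $Y$.
\end{itemize}
Both cases use the absence of induced $4$- and $5$-cycles in an essential way, beyond convexity.

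A side remark: you do not need to stack in both directions. The half toward $v$ already contains an arrowhead, namely the interval between the initial tripod centre and the apex. Building the full two-sided configuration only adds isometry checks across the starting level.
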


\begin{proof}
    We first show that (2) and (3) are equivalent.
    It is clear that (2) implies (3), as  level sets in the interval $I(v_0,v_{2n+1})$ between opposite endpoints of an $2n+1$-arrowhead  contain $n$-tripods and $n$-asteroids.
    Now suppose $X$ is $M$-harmless, but there are vertices $u,v$ such that some  level $L_k(u,v)$ contains an isometrically embedded $\lceil\frac{M+1}{2}\rceil$-tripod or $\lceil\frac{M}{2}\rceil$-asteroid.
    By iteratively applying Lemma \ref{lem:tripod-asteroid} we obtain an 
    $(M+1)$-arrowhead $Y$ whose axis is the defined by a sequence of tetrahedra starting with $\sigma'_1 = \sigma(v_0=v, a_1, b_1, c_1)$ and $\sigma_1 = \sigma(v_1, a_1, b_1, c_1)$.
    By construction, $Y$ is systolic. 
    
    For a level $L_i(u,v)$, we denote by $Y_i$ the intersection $Y\cap L_i(u,v)$. If this intersection is non-empty, then  $Y_i$ is  a large asteroid or a large tripod.
    From the construction of $Y$ it follows that if $y$ is any vertex of $Y$, say $y\in Y_j$ with $k<j$, then there is an $i$ with $k\le i<j$ such that $Y_i$ contains two vertices $a$ and $b$, and the vertices $y,a,b$ define an equilateral triangle $\Delta(y,a,b)$ of side $\ell=j-i$ in $Y$ (i.e., a subdivision of the equilateral triangle with side $\ell$ into equilateral triangles with side 1).
    Then we say that the vertices $a,b\in L_i(u,v)$ \emph{generate} the vertex $y$.
    Vice-versa, any pair of vertices $a,b\in Y_i$, where $i\ge k$, generate a unique vertex $y$, which belongs to the level $L_{i+d(a,b)}(u,v)$. 
    
    To obtain a contradiction, it remains to show that the  $(M+1)$-arrowhead $Y$ is isometrically embedded in $X$. Pick any two vertices $x,y$ of $Y$, say $x\in L_i(u,v)$ and $y\in L_j(u,v)$ with $k\le i\le j\le d(u,v)$. We proceed by double induction on $\ell=j-i$ and on the distance $d_Y(x,y)$ between $x$ and $y$ in $Y$.  If $\ell=0$, then $x$ and $y$ belong to the same level $L_i(u,v)$ and we are done because $Y_i$  is an isometrically embedded large tripod or large asteroid.  Now suppose that $\ell>0$. Let $z$ be a closest to $y$ vertex of $Y_i$. If there are several such vertices in $Y_i$, suppose additionally that $z$ is chosen to be closest to $x$. By the definition of $Y$, it follows that the distance between $y$ and $z$ in $X$ is $\ell$ and $Y$ contains a  $(z,y)$-path of this length, traversing the level sets  $L_i(u,v),L_{i+1}(u,v), \ldots, L_j(u,v)$. Consequently, $d_Y(z,y)=d(z,y)=j-i=\ell$.  If $x=z$, then we are done. Therefore, let $x\ne z$. 

    From the definition of $z$ it follows that  $y$ is generated by the pair $z,w$, where $w$ is some vertex of $Y_i$ such that $z$ is on a $(w,x)$-path in $Y_i$. Furthermore, if $x\nsim z$, then the pair $w,x$ generates a vertex $y_0\in Y_{j'}$, where $j'=j+d(z,x)$. Note that in this case the equilateral triangle $\Delta(y,z,w)$ is contained in the equilateral triangle $\Delta(y_0,x,w)$. Notice also that in $X$ we have $y\in I(y_0,w)\cap I(y_0,z)$ and $y_0\in I(y,v)$. Furthermore,  $z$ belongs to a shortest $(x,y)$-path $\pi$ in $\Delta(y_0,x,w)$, first traversing the level sets and then going straight along $Y_i$. We denote by $\gamma=(x,q,q',\ldots,w)$ the unique shortest path of $Y_i$ connecting $x$ and $w$ (and passing via $z$). We distinguish two cases.

 \begin{case}
        The vertices $x$ and $z$ are not adjacent.
    \end{case}

    For all $p\in \pi\cap \gamma$ different from $x$, we have that $d_Y(y,p)<d_Y(y,x)$, so by the second inductive hypothesis we conclude that $d_Y(p,y)=d(p,y)$. Consider the first two vertices $q$ and $q'$ of 
    $\pi\cap \gamma$.  Since 
    $x\nsim z$, the vertex $q'$ exists. 
    Because $x\sim q$, we have $d(q,y)-1\leq d(x,y)\leq d(q,y)+1$. Therefore, to prove $d(x,y)=d_Y(x,y)$, and since $d_Y(x,y)=d_Y(q,y)+1$, it suffices to rule out the possibilities $d(x,y)=d(q,y)-1$ and $d(x,y)=d(q,y)$. The first case is immediate, because if $d(x,y)=d(q,y)-1$, then $x,q'\in I(q,y)$. Since $X$ is systolic, $x$ and $q'$ must be adjacent, a contradiction to the assumption that $\pi$ is a shortest $(x,y)$-path of $Y$. 
    Thus we may assume that $d(x,y)=d(q,y) \coloneq k$.
    Let $x_1$ be the common neighbour of $x$ and $q$ in $\Delta(y_0,w,x)$. Since $x_1\in L_{i+1}(u,v)$, by the first induction assumption $d(x_1,y)=d_Y(x_1,y)=k$. 
    By Lemma \ref{l:clique-in-the-sphere} in $X$ applied to $y$ and the triangle $x,q,x_1$, there is a vertex $s\sim x,q,x_1$ at distance $k-1$ from $y$. 
    Since $s,q'\in I(q,y)$, the vertices $s$ and $q'$ must be adjacent. 
    Since $s\sim q',x\in L_i(u,v)$ and $L_i(u,v)$ is convex, this implies that $s\in L_i(u,v)$. Since $Y_i$ is a large tripod or large asteroid, $s$ is not a vertex of $Y$. 
    Now let $q_1$ be the common neighbour of $q'$ and $q$ in $\Delta(y_0,w,x)$. Obviously, $q_1\ne s$. 
    Since $q_1,s\in I(q,y)$, the vertices $s$ and $q_1$ must be adjacent. 
    By Lemma \ref{l:clique-in-the-sphere} in $X$  applied to $y$ and the triangle $q',s,q_1$, there is a vertex $s'\sim q',s,q_1$ at distance $k-2$ from $y$. By the construction of $Y$, $s'$ does not belong to $\Delta(y_0,w,x)$. 
    Let $x_2$ be the common neighbour of $q_1$ and $x_1$ in $\Delta(y_0,w,x)$, and let $q_2$ be the other neighbour of $q_1$ in $\Delta(y_0,w,x)\cap L_{i+2}(u,v)$. Clearly, $s'\ne q_2$. 
    Since $s',q_2\in I(q_1,y)$, the vertices $s'$ and $q_2$ must be adjacent. 
    Consider the 5-cycle $ss'q_2x_2x_1$ in $X$. 
    Since $s\in L_i(u,v)$ and $q_2,x_2\in L_{i+2}(u,v)$, the vertex $s$ cannot be adjacent to $q_2$ or $x_2$. 
    Since $d(y,s')=d(y,q_2)=k-2$ and $d(y,x_1)=k$, the vertex $x_1$ cannot be adjacent to $s'$ or $q_2$. 
    Since $x_1, s'\in L_{i+1}(u,v)$ and $L_{i+1}(u,v)$ is convex, $x_2$ cannot be adjacent to $s'$.
    Therefore $ss'q_2x_2x_1$ is an induced 5-cycle in  $X$, which is impossible since 
    $X$ is systolic.  This concludes the analysis of the case $x\nsim z$.

\begin{case} The vertices $x$ and $z$ are adjacent. 
\end{case}

    In this case we have $z=q$. 
    Since $d_Y(z,y)=\ell$ and $x\sim z$, we have $d_Y(x,y)=\ell+1$ and $\ell \leq d(x,y)\leq \ell+1$. To prove that $d(x,y)=d_Y(x,y)$, we argue by contradiction and assume that $d(x,y)=\ell$. We will derive a contradiction by constructing an $(x,y)$-path in $Y$ of length $\ell$, contradicting $d_Y(x,y)=\ell+1$. 
    Let $\alpha=(z=y_0,y_1,\ldots,y_{\ell-1},y_{\ell}=y), \gamma=(z=q,q',q'',\ldots,w),$  and  $\beta=(w=t_0,t_1,\ldots,t_{\ell-1},t_{\ell}=y)$ be the geodesic paths of $\Delta(y,z,w)$ connecting the pairs $\{ z,y\}$, $\{ z,w\}$, and $\{ w,y\}$.
    By (TC) in $X$ applied to $y$ and the edge $zx$, we find a vertex $p_1\sim z,x$ at distance $\ell-1$ from $y$. If $p_1$ belongs to $Y$, applying the induction assumption to the pair $p_1,y$ we will find a $(p_1,y)$-path of length $\ell-1$ in $Y$.
    Adding the edge $xp_1$ to this path, we generate a $(x,y)$-path of length $\ell$, as required.
    Thus suppose that $p_1$ does not belong to $Y$.
    Hence $p_1\ne y_1$. Since $p_1,y_1\in L_{i+1}(u,v)$ and $z\sim p_1,y_1$, we conclude that $p_1\sim y_1$. 
    By (TC) in $X$ applied to $y$ and the edge $p_1y_1$, we find a vertex $p_2\sim p_1,y_1$ at distance $\ell-2$ from $y$.
    If $p_2\in Y$ and $p_2\ne y$, we can apply the induction hypothesis to the pairs of vertices $y,p_2$ and $p_2,x$ and deduce that they are connected in $Y$ by paths of lengths $\ell-2$ and $2$.
    Combining those paths, we get a $(y,x)$-path of length $\ell$ in $Y$ and we are done.
    So, suppose $p_2\notin Y$ and thus $p_2\ne y_2$.
    Again, from the convexity of $L_{\ell+2}(u,v)$ and $y_1\sim y_2,p_2$ we infer that $p_2\sim y_2$.
    Continuing, this way we construct a $(x,y)$-path $\lambda=(x=p_0,p_1,p_2,\ldots, p_{\ell-1},p_\ell=y)$ of length $\ell$ such that each vertex 
    $p_k$ for $k\ne 0,\ell$ does not belong to $Y$ and is adjacent to the vertices $y_{k-1},y_k$ of the path $\alpha$. 
    
    The vertex $p_{\ell-1}$ is adjacent to $y=p_\ell$.
    Since $y$ is adjacent to $t_{\ell-1}\in L_{j-1}(u,v)$, the convexity of $L_{j-1}(u,v)$ implies that $p_{\ell-1}\sim t_{\ell-1}$. Let $s_{\ell-2}$ be the common neighbour of $y_{\ell-1}$ and $t_{\ell-1}$ in $\Delta(y,z,w)\cap L_{j-2}$.
    Notice that $s_{\ell-2}$ is also adjacent to $y_{\ell-2}$ and $t_{\ell-2}$. 
    The vertices $p_{\ell-1},y_{\ell-2},s_{\ell-2},t_{\ell-1}$ define a 4-cycle of $X$, which cannot be induced because $X$ is systolic.
    By inductive hypothesis, $t_{\ell-1}$ cannot be adjacent to $y_{\ell-2}$.
    This implies that $p_{\ell-1}\sim s_{\ell-2}$.
    Since $p_{\ell-1}\sim p_{\ell-2}$, the convexity of $L_{\ell-2}(u,v)$ implies that $p_{\ell-2}\sim s_{\ell-2}$. If 
    $s_{\ell-3}$ is the common neighbour of $s_{\ell-2}$ and $y_{\ell-2}$ in $\Delta(y,z,w)$, then employing the same argument, we deduce that $s_{\ell-3}\sim p_{\ell-2},x_{\ell-3}$.
    Continuing this way, we deduce that the vertex $s_1\in L_{i+1}(u,v)$, which  in $\Delta(y,z,w)$ is the common neighbour
    of $s_2,y_2$ as well as of $q',q''\in \gamma$, must be adjacent to the vertex $p_1$. Again, in $X$ we get the 4-cycle $p_1zq's_1$, which cannot be induced.
    Consequently, $p_1$ must be adjacent to $q'$. Since the level $L_i(u,v)$ is convex and $p_1\sim q',x$, we deduce that $x\sim q'$. Therefore $Y_i$ is a large asteroid having the triangle $xzq'$ as the centre. But then, by the construction of $Y$, the unique common neighbour $y_1$ of $z,q'$ in $L_{i+1}(u,v)$ also must be adjacent to $x$.
    Therefore, we get a $(x,y)$-path in $Y$ of length $\ell$, as required. 
    
    Since in both cases we obtain $d_Y(x,y)=d(x,y)$, the inductive step implies that this equality holds for all pairs of vertices in $Y$. Hence $Y$ is an isometrically embedded $(M+1)$-arrowhead, which is impossible.  Consequently, all level sets of all intervals in $X$ are $(\lceil\frac{M}{2}\rceil)$-branchless. This completes the proof that  (2) and (3) are equivalent.

    \medskip    

    Now we show that (1) and (2) are equivalent.
    It is clear that if level sets of intervals contain arbitrarily large tripods or asteroids, then $X$ is not fit. 
    We will show that if there exists $N\in \mathbb{N}$ such that all level sets of all intervals are $N$-branchless, then $X$ is fit. Moreover, we show that the level sets of intervals in $X$ are then $(1,2N+2)$-quasi-isometric to segments.

    It suffices to prove that any finite chordal graph $R$ that is $N$-branchless and does not contain $3$-fans is quasi-isometric to a segment, and that the quasi-isometry constants depend appropriately on $N$.
    Pick any pair $x,y$ of vertices of $R$ and let $P$ be any shortest $(x,y)$-path. First we prove that the interval $I(x,y)$ is quasi-isometric to $P$.
    
    \begin{claim} \label{c:distance-interval} Any vertex $z$ of $I(x,y)$ is at distance at most 1 from $P$. 
    \end{claim}

    \begin{proof} Let $z'$ be a vertex of $P$ such that $d(x,z)=d(x,z')$ and $d(y,z)=d(y,z')$.
    Consider any quasi-median of the triplet $x,z,z'$.
    Since $z,z'$ belong to the same level of $I(x,y)$, this quasi-median has the form $x'zz'$, where $x'\in I(x,z)\cap I(x,z')$.
    Analogously, any quasi-median of the triplet $y,z,z'$ has the form $y'zz'$ for $y'\in I(y,z)\cap I(y,z')$.
    Pick any shortest paths $P(x',z),P(x',z'),P(y',z),P(y',z')$ between the pairs $\{x',z\}$, $\{ x',z'\}$, $\{ y',z\}$, and $\{ y',z''\}$, respectively.
    From the choice of the vertices  $x',y',z,z'$ it follows that these four paths pairwise intersect only at their common end-vertices.
    Consequently, their union is a simple cycle $C$ of $I(x,y)$, which cannot be induced.
    Since there are no edges between the vertices of $(P(x',z)\cup P(x',z'))\setminus \{z,z'\}$ and the vertices of $(P(y',z)\cup P(y',z'))\setminus \{ z,z'\}$, either the vertices $z$ and $z'$ are adjacent and we are done, or $C$ contains an induced cycle of length $>3$ and we get a contradiction with the chordality of the graph $R$. 
\end{proof}

Let $x^*,y^*$ be a diametral pair of the graph $R$, i.e., $x^*,y^*$ is a pair of vertices of $R$ at maximal distance. Let $P^*$ be any shortest $(x^*,y^*)$-path. 

\begin{claim} \label{c:distance-chordal} Any vertex $z$ of $R$ is at distance at most $N$ from $I(x^*,y^*)$ and thus at distance at most $N+1$ from $P^*$. 
\end{claim}

\begin{proof} If $z\in I(x^*,y^*)$, then we are done. Now, suppose that $z\notin I(x^*,y^*)$.  Consider any quasi-median $x'y'z'$ of the triplet $x^*,y^*,z$. Recall that $x'y'z'$ is a strongly equilateral metric triangle.  Since $R$ is chordal, this metric triangle has side at most 2. Since $R$ does not contain 3-fans, $x'y'z'$ is a metric triangle with side 0 or 1 (otherwise, applying (TC) to $x',y',z'$ we can generate a 3-sun and thus a 3-fan).

If $x'y'z'$ has size 0, then $x'=y'=z':=t$. Since $z\notin I(x^*,y^*)$, necessarily $z\ne t$.
Since $x^*,y^*$ is a diametral pair, $d(z,t)\leq \min\{d(x^*,t), d(y^*,t)\}$.
Hence we can assume that both $d(x^*,t)$ and $d(y^*,t)$ are at least $N+1$ (otherwise we are done).
Pick shortest paths $\rho_{x^*}$, $\rho_{y^*}$ and $\rho_{z}$ from $x^*$, $y^*$ and $z$ to $t$.
Since $t$ is a quasi-median $\rho_{x^*} \cup \rho_{y^*} \cup \rho_{z}$ contains an isometrically embedded $d(z,t)$-tripod.
Thus, $d(z,t)\leq N$ because $R$ does not contain an $N+1$-tripod.

Now suppose that $x'y'z'$ has size 1, i.e., $x',y',z'$ are pairwise adjacent.
If $z=z'$, then $z\sim x',y'\in I(x^*,y^*)$, whence $z$ is located at distance 1 from $I(x^*,y^*)$.
So suppose that $z\ne z'$.
Once again, since $x^*,y^*$ is a diametral pair, $d(z,z')\leq \min\{d(x^*,x'), d(y^*,y')\}$, so we can assume that $d(x^*,x')$ and $d(y^*,y')$ are at least $N+1$.
Pick shortest paths $\rho_{x^*}$, $\rho_{y^*}$ and $\rho_{z}$ from $x^*$ to $x'$, $y^*$ to $y'$ and $z$ to $z'$.
Since $x',y',z'$ is a quasi-median $\rho_{x^*} \cup \rho_{y^*} \cup \rho_{z}$ is an isometrically embedded $d(z,z')$-asteroid.
Thus, $d(z,z')\leq N$ because $R$ does not contain an $N+1$-asteroid.
This shows that any vertex $z$ of $R$ either belongs to $I(x^*,y^*)$ or is at distance at most $N$ from $I(x^*,y^*)$.
The second assertion follows from Claim \ref{c:distance-interval}.  
\end{proof}

From Claim \ref{c:distance-chordal} one can easily deduce that the graph $R$ and the path $P^*$ are $(1,2N+2)$-quasi-isometric via the map $f$, which associate to each vertex $x$ of $R$ a vertex $f(x)$ of $P^*$ closest to $x$.
Indeed, for any vertices $x,y$ of $R$, by the triangle inequality we get \[\vert d(x,y)-d_{P^*}(f(x),f(y))\vert \le d(x,f(x))+d(y,f(y))\le 2N+2.\] Consequently, the level sets of intervals in $X$ are $(1,2N+2)$-quasi-isometric to segments, thus the systolic complex $X$ is fit.
\end{proof}

\begin{remark}
    From the proof of Proposition \ref{prop:equivalent_definition} one can see that if $X$ is $M$-harmless, then it is $\lceil\frac{M}{2}\rceil$-branchless; and that if $X$ is $N$-branchless, then it is $2N+1$-harmless.
\end{remark}

From Proposition~\ref{prop:equivalent_definition} we deduce the following easy condition on links of a systolic complex implying fitness.

\begin{corollary}\label{c:fit_condition}
    If links of vertices of a systolic complex $X$ do not contain induced doubled asteroids 
    then level sets of $X$ do not contain induced tripods or asteroids.
    It follows that the level sets of intervals in $X$ are $(1,4)$-quasi-isometric to segments, in particular, $X$ is fit. 
\end{corollary}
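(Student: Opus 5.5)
The plan is to argue by contraposition: if some level $L_k(u,v)$ contains an induced tripod or asteroid, we build an induced doubled asteroid in the link of a vertex. Then we invoke the final statement of Proposition~\ref{prop:equivalent_definition} with $N=1$. It is enough to consider $1\le k\le \ell-1$, since $L_0$ and $L_\ell$ are single vertices.

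\textbf{Step 1: pass from a tripod to an asteroid.} An induced tripod in a level is an isometric $1$-tripod there. Since levels are convex (Proposition~\ref{levels-are-chordal}), it is isometric in $X$ too. By Lemma~\ref{lem:tripod-asteroid}(\ref{lem:tripod-asteroid_1}), $L_{k+1}(u,v)$ then contains a $0$-asteroid, which is only a triangle and not yet useful. So I run the construction of that lemma directly instead, working toward both endpoints.

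\textbf{Step 2: the main case, an induced asteroid.} Suppose $L_k(u,v)$ contains an induced asteroid with central triangle $abc$ and tips $a'\sim a$, $b'\sim b$, $c'\sim c$.
\begin{enumerate}
\item Apply Lemma~\ref{l:clique-in-the-sphere} toward $v$ to the triangle $abc$. This gives a vertex $w\in L_{k+1}(u,v)$ adjacent to $a,b,c$.
\item Apply it toward $u$ to the same triangle. This gives $w'\in L_{k-1}(u,v)$ adjacent to $a,b,c$.
\item The link of $a$ now contains $w,w',b,c,a'$, which is not yet enough. Instead, apply (TC) toward $v$ to the edges $aa'$, $bb'$, $cc'$. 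This produces $x_a,x_b,x_c\in L_{k+1}$.
\item Convexity of $L_{k+1}$ together with $w\sim a$ forces $x_a\sim w$, and similarly for $x_b,x_c$.
\item Likewise, (TC) toward $u$ gives $y_a,y_b,y_c\in L_{k-1}$, each adjacent to $w'$.
\end{enumerate}

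Now look at the link of a suitable vertex; I expect the right choice to be the midpoint of an edge, i.e.\ the link of $w$ or $w'$. That link contains the triangle $a,b,c$ together with $x_a,x_b,x_c$. Those are pairwise adjacent as neighbours of $w$ in a convex level, but then they form a triangle rather than tips. So instead I consider the link of $a$ in the tetrahedron picture resembling the triplane, where the link of $v_i$ is the doubled asteroid.

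\textbf{Step 3: locate the doubled asteroid directly.} Take the asteroid triangle $abc$ in $L_k$, together with $w\in L_{k+1}$ and $w'\in L_{k-1}$. The triangle $abc$ lies in the links of both $w$ and $w'$. The cleanest target is this: the link of $w$ contains the triangle $abc$ with pendant vertices $x_a,x_b,x_c$ (one asteroid), and the same triangle with pendants $a',b',c'$ (a second asteroid)? The tips of these two asteroids are different, but gluing along tips needs equal tips.

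So I would rather use the configuration ``triangle $x_ax_bx_c$ and triangle $abc$ sharing tips''. This arises in the link of a vertex $t$ lying in both $L_k$ and $L_{k+1}$ adjacent to everything, and that does not exist.

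I expect this identification of the vertex whose link contains the doubled asteroid to be the main obstacle. The approach I would try first is to mimic the triplane: choose the vertex $w$ and exhibit the two triangles $abc$ (in $L_k$) and a triangle in $L_{k+2}$, or $a,b,c$ paired with $y$-vertices, joined through common tips. I would then verify inducedness using the level separation, which says there are no edges between $L_i$ and $L_j$ for $|i-j|\ge 2$, together with convexity of levels and the absence of induced $4$-cycles. The tripod case reduces to this one via Lemma~\ref{lem:tripod-asteroid}(\ref{lem:tripod-asteroid_2}) in reverse, i.e.\ a tripod in $L_k$ comes from the asteroid structure at the adjacent level.

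\textbf{Step 4: conclude.} Once levels contain no induced tripods or asteroids, every level is $0$-branchless, and in particular $1$-branchless. Proposition~\ref{prop:equivalent_definition} then gives $(1,2\cdot1+2)=(1,4)$-quasi-isometry to segments, so $X$ is fit.
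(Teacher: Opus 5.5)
Your proposal never exhibits the doubled asteroid; you yourself flag its location as the main obstacle. The two steps meant to lead there are also flawed.

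\textbf{The claim that $x_a,x_b,x_c$ form a triangle is false.} In Step~2 you say they are pairwise adjacent ``as neighbours of $w$ in a convex level''. Convexity of $L_{k+1}(u,v)$ only forces adjacency between two of its vertices that have a common neighbour \emph{outside} the level, as with $x_a,w$ and $a$. Here $w$ itself lies in $L_{k+1}(u,v)$. In fact the opposite holds. If $x_a\sim b$, then $x_a$ would be a common neighbour of $a'$ and $b$, which are at distance $2$, so $x_a\in I(a',b)\subseteq L_k(u,v)$, which is impossible. Likewise $x_b\nsim a$. So $x_a\sim x_b$ would produce the induced $4$-cycle $a\,x_a\,x_b\,b$. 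Hence $w,x_a,x_b,x_c$ form an induced tripod in $L_{k+1}(u,v)$; this is Lemma~\ref{lem:tripod-asteroid}(2) with $n=1$.

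\textbf{The reduction of tripods to asteroids fails.} Invoking ``Lemma~\ref{lem:tripod-asteroid}(2) in reverse'' is invalid, since a tripod in a level need not come from an asteroid in a neighbouring level. For example, take a doubled asteroid with triangles $a'b'c'$, $abc$ and tips $x\sim a,a'$, $y\sim b,b'$, $z\sim c,c'$, and cone it from an apex $t$. Add $u$ adjacent to $a',b',c'$ and $v$ adjacent to $a,b,c$. The flag completion is systolic. In it, $L_2(u,v)=\{t,x,y,z\}$ is an induced tripod, while $L_1(u,v)$ and $L_3(u,v)$ are just triangles.

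\textbf{The missing idea.} The doubled asteroid lives in the link of the \emph{centre} of a tripod. So tripods are the base case, and asteroids reduce to them, not the other way around. Suppose $L_k(u,v)$ contains an induced tripod with centre $t$ and tips $x,y,z$; then $2\le k\le \ell-2$, since $L_1$ and $L_{\ell-1}$ are cliques by (INC).
\begin{itemize}
\item Apply (TC) toward $v$ to the edges $tx,ty,tz$. This gives $a\sim t,x$, $b\sim t,y$, $c\sim t,z$ in $L_{k+1}(u,v)$.
\item These are pairwise distinct, and $a\nsim y,z$ etc., because a common neighbour of two tips would lie in $L_k(u,v)$ by convexity.
\item They are pairwise adjacent, because they share the neighbour $t\notin L_{k+1}(u,v)$ and $L_{k+1}(u,v)$ is convex.
\item Doing the same toward $u$ gives a triangle $a'b'c'$ in $L_{k-1}(u,v)$ attached to the same tips.
\end{itemize}
There are no edges between $L_{k-1}$ and $L_{k+1}$. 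Hence $x,y,z,a,b,c,a',b',c'$ induce a doubled asteroid in $\Lk(t)$; this is the paper's argument.

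If instead $L_k(u,v)$ contains an induced asteroid, the argument above gives the induced tripod centred at your $w$. The same construction then puts a doubled asteroid in $\Lk(w)$, formed by $abc$, the tips $x_a,x_b,x_c$, and a triangle in $L_{k+2}(u,v)$. Your last suggestion in Step~3 points at exactly this configuration. However, it contradicts your earlier claim that $x_a,x_b,x_c$ form a triangle, and you never verify it. Step~4 is fine.
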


\begin{proof}
    Let $u,v$ be two vertices of $X$, with $d(u,v)=\ell$, and let $1\leq k \leq \ell-1$.
    Suppose $L_k(u,v)$ contains an induced tripod.
    Then by Lemma \ref{lem:tripod-asteroid} both $L_{k-1}(u,v)$ and $L_{k+1}(u,v)$ contain an induced $0$-asteroid.
    Thus, the link of the central vertex of the tripod in $L_k(u,v)$ contains an induced subgraph isomorphic to a graph obtained by gluing two asteroids along their tips, a contradiction.
    A similar argument works if $L_k(u,v)$ contains an induced asteroid.
\end{proof}

\section{Normal clique paths in bridged graphs}\label{sec:normal_clique_paths}

Normal clique paths in systolic complexes have been defined in \cite{JaSw} in order to prove that systolic groups are biautomatic. They are analogues of normal cube-paths in CAT(0) cube complexes.  In \cite{ChChGi}, the approach of \cite{JaSw} was generalized to graphs with convex balls. 
Notice that normal clique paths are usually defined in two different ways, globally and locally, and it is proved that both definitions are equivalent, thus establishing that normal clique paths are canonically and uniquely defined. In this paper, we only need the global definition of normal clique paths, which we present now and closely follow \cite{ChChGi}. 

For a set $K$ of vertices of a bridged graph
$G = (V,E)$ and an integer $k \ge 0$, let
$B^*_k(K)=\bigcap_{s\in K} B_k(s)$.  If $K \subseteq K'$, then $B^*_k(K')\subseteq B^*_k(K)$. If $K$ is a clique, then $B^*_1(K)$ is the union of $K$ and the set of vertices adjacent to all vertices in $K$. 
For a vertex $u$ and  a clique $\tau$ of $G$,  a \emph{clique path} from $u$ to $\tau$ is a
sequence of cliques
$(\sg{u} = \sigma^{(0)}, \sigma^{(1)}, \ldots, \sigma^{(k)}=\tau)$ such that any
two consecutive cliques $\sigma^{(i)}$ and $\sigma^{(i+1)}$ are disjoint and
their union induces a clique of $G$. 
%
Now suppose that the clique $\tau$ is at uniform distance $k$ from the vertex $u$. 
For each $i$ running from $k$ to 0, we inductively define the sets  $\sigma_{(u,\tau)}^{(i)}$ by setting
$\sigma_{(u,\tau)}^{(k)}:=\tau$ and
$\sigma_{(u,\tau)}^{(i)}:=B_1^*\left(\sigma_{(u,\tau)}^{(i+1)}\right)\cap B_{i}(u),$
for any $i\in \sg{0,\ldots, k-1}$. We also let $\sigma_{(u,\tau)}^{(i)}:=\tau$ for any $i \geq k+1$. Let $\gamma_{(u,\tau)}:=(\sg{u} = \sigma_{(u,\tau)}^{(0)}, \sigma_{(u,\tau)}^{(1)},\ldots, \sigma_{(u,\tau)}^{(k-1)},\sigma_{(u,\tau)}^{(k)}=\tau)$. It is shown in \cite{ChChGi} that all sets $\sigma_{(u,\tau)}^{(i)}$ are nonempty cliques of $G$ and that $\gamma_{(u,\tau)}$ is a clique path, which is called the \emph{normal clique path} from the vertex $u$ to the clique $\tau$. If $\tau$ is a single vertex $v$, then the normal clique path from $u$ to $v$ is denoted by $\gamma_{(u,v)}$. Notice that in this case, we additionally have 
the inclusion $\sigma_{(u,v)}^{(i)}\subseteq S_i(u)\cap I(u,v)$. Notice also that the normal clique paths are directed, since the cliques of
$\gamma_{(u,v)}$ and $\gamma_{(v,u)}$ are not the same in general. We will use the following simple 
property of normal clique paths: 

\begin{lemma}
\label{lem: inclalt} If $u$ is a vertex and $\tau',\tau$ are two nonempty cliques at uniform distance $k$ from $u$ such that $\tau'\subseteq \tau$, then  the inclusion $\sigma_{(u,\tau)}^{(k-1)} \subseteq \sigma_{(u,\tau')}^{(k-1)}$ holds.
\end{lemma}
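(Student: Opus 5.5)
Unwinding the definitions, the claim follows from the anti-monotonicity of $B^*_1$ under inclusion. By the construction of normal clique paths,
\[
\sigma_{(u,\tau)}^{(k-1)} = B_1^*(\tau)\cap B_{k-1}(u)
\quad\text{and}\quad
\sigma_{(u,\tau')}^{(k-1)} = B_1^*(\tau')\cap B_{k-1}(u),
\]
since $\sigma_{(u,\tau)}^{(k)}=\tau$ and $\sigma_{(u,\tau')}^{(k)}=\tau'$ by definition. It therefore suffices to show $B_1^*(\tau)\subseteq B_1^*(\tau')$.

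This inclusion is exactly the observation recorded right after the definition of $B^*_k$: if $K\subseteq K'$ then $B^*_k(K')\subseteq B^*_k(K)$. It holds because $B^*_k(K')=\bigcap_{s\in K'}B_k(s)$ is an intersection over a larger index set than $B^*_k(K)=\bigcap_{s\in K}B_k(s)$. Applying this with $K=\tau'$, $K'=\tau$ and $k=1$ gives $B_1^*(\tau)\subseteq B_1^*(\tau')$. Intersecting both sides with $B_{k-1}(u)$ yields $\sigma_{(u,\tau)}^{(k-1)}\subseteq\sigma_{(u,\tau')}^{(k-1)}$.

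Two remarks on hypotheses. We need $k\ge 1$ for the index $k-1$ to be meaningful; $k=0$ would force $\tau=\tau'=\{u\}$ and the statement is vacuous. The hypotheses that $\tau,\tau'$ are nonempty cliques at uniform distance $k$ are only needed so that both normal clique paths are defined, i.e.\ so that \cite{ChChGi} guarantees the sets are nonempty cliques; they play no role in the inclusion itself. There is no real obstacle here: the only point to check is that both paths use the same recursion formula at step $k-1$ with the same ball $B_{k-1}(u)$, which holds because $\tau$ and $\tau'$ sit at the same distance $k$ from $u$.
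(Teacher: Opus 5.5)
Your proof is correct and is essentially the paper's argument. The paper works elementwise: an $x\in\sigma_{(u,\tau)}^{(k-1)}$ lies in $S_{k-1}(u)$ and is adjacent to all of $\tau\supseteq\tau'$, so $x\in\sigma_{(u,\tau')}^{(k-1)}$. That is exactly your observation that $B_1^*$ is anti-monotone, intersected with the same ball $B_{k-1}(u)$.
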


\begin{proof}
Pick  $x\in \sigma_{(u,\tau)}^{(k-1)}$. Then  $x\in S_{k-1}(u)$ and clearly $x$ is adjacent to every vertex of $\tau$, so to every vertex of $\tau'$ as well, which implies that $x \in \sigma_{(u,\tau')}^{(k-1)}$.
\end{proof}

Pick a basepoint $x_0$ and consider the normal clique paths $\gamma_{(x_0,y)}$, $y\in V$. Next we will show that if $x$ is close to $y$, then  $\gamma_{(x_0,y)}$ rapidly converges to the interval $I(x_0,x)$:

\begin{proposition}\label{p:convergence_of_normal_clique_paths} Let $G=(V,E)$ be a bridged graph with a basepoint $x_0$. If $x,y\in V$ with $d(x,y)=k$ and $\gamma_{(x_0,y)}=(\{x_0\} =\sigma_0,\sigma_1,\ldots, \sigma_{m-1},\sigma_m=\{ y\})$
is the normal clique path from $x_0$ to $y$, then there exists an index $i\le 2k$ such that the cliques $\sigma_0,\sigma_1,\ldots, \sigma_{m-i}$ are contained in the interval $I(x_0,x)$.
\end{proposition}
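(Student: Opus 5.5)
Write $n=d(x_0,x)$ and $m=d(x_0,y)$, so that $|m-n|\le k$. Recall that $\sigma_j\subseteq S_j(x_0)\cap I(x_0,y)$. For a vertex $z\in S_j(x_0)$ we always have $d(z,x)\ge n-j$, with equality iff $z\in I(x_0,x)$. The plan is to track the \emph{excess}
\[
e(j)=\max_{z\in\sigma_j} d(z,x)-(n-j)\ \ge 0
\]
and show that it drops by at least one at each backward step until it reaches $0$.

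At the top we have $e(m)=d(y,x)-(n-m)=k+m-n\le 2k$. Hence it suffices to prove the following two facts.

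\emph{Claim 1.} If $e(j)=0$, i.e.\ $\sigma_j\subseteq I(x_0,x)$, then $\sigma_{j-1}\subseteq I(x_0,x)$.

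\emph{Claim 2.} If $e(j)>0$, then every $w\in\sigma_{j-1}$ satisfies $d(w,x)\le D_j:=\max_{z\in\sigma_j}d(z,x)$.

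Claim 2 gives $e(j-1)\le D_j-(n-j+1)=e(j)-1$. Together with Claim 1, it follows that $e(m-i)=0$ for some $i\le e(m)\le 2k$, and then all of $\sigma_0,\dots,\sigma_{m-i}$ lie in $I(x_0,x)$, as required.

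Claim 1 is immediate. Any $w\in\sigma_{j-1}$ lies in $S_{j-1}(x_0)$ and is adjacent to some $z\in\sigma_j$ with $d(z,x)=n-j$. Hence $d(w,x)\le n-j+1$, which is exactly the condition for $w\in I(x_0,x)$.

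For Claim 2, suppose on the contrary that some $w\in\sigma_{j-1}$ has $d(w,x)=D+1$, where $D=D_j$. Since $w$ is adjacent to every vertex of $\sigma_j$, the whole clique $\sigma_j$ is then at uniform distance exactly $D$ from $x$. By Lemma~\ref{l:clique-in-the-sphere} applied to $x$ and $\sigma_j$, there is a vertex $t$ with $d(t,x)=D-1$ that is adjacent to all of $\sigma_j$. We have $d(t,x_0)\in\{j-1,j,j+1\}$, and we split into cases.

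\emph{Case $d(t,x_0)=j-1$.} By the definition of the normal clique path, $t\in B^*_1(\sigma_j)\cap B_{j-1}(x_0)=\sigma_{j-1}$. This is a clique containing $w$, so $t\sim w$. This contradicts $d(w,x)-d(t,x)=2$.

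\emph{Cases $d(t,x_0)\in\{j,j+1\}$.} Here the hypothesis $e(j)>0$, i.e.\ $D>n-j$, must be used. The idea is to consider $w$, a vertex $z\in\sigma_j$ and $t$, with $d(w,t)=2$ and $z\in I(w,t)$. We compare them with a geodesic from $x_0$ to $x$, whose vertex at level $j-1$ is at distance $n-j+1<D+1$ from $x$. Then we use convexity of the balls $B_{j-1}(x_0)$ and $B_{D}(x)$ (Theorem~\ref{bridged-convex-balls}), together with (TC), to produce a vertex of $S_{j-1}(x_0)$ adjacent to all of $\sigma_j$ and at distance at most $D-1$ from $x$. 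Such a vertex would lie in $\sigma_{j-1}$ and be non-adjacent to $w$, which is a contradiction. A natural route is to take a quasi-median of $x_0,w,x$ and use that metric triangles are strongly equilateral (Lemma~\ref{strongly-equilateral}) to locate the required vertex. Alternatively, one can look for a forbidden induced $4$- or $5$-cycle formed by $w$, $z$, $t$ and neighbours of $t$ closer to $x_0$.

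This case analysis in Claim 2 is the step I expect to be the main obstacle. Everything else is bookkeeping with the excess function.
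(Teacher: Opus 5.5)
\textbf{There is a genuine gap: your Claim~2 is false.} Consequently the excess does \emph{not} have to drop at every backward step, and your intermediate conclusion $i\le e(m)$ fails.

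Take the chordal (hence bridged) graph on $x_0,a,b,y,x$ with edges $x_0a$, $x_0b$, $ab$, $ay$, $by$, $bx$, $xy$. This is a $3$-fan with apex $b$ over the path $x_0,a,y,x$. Here $m=n=2$, $k=1$ and $\gamma_{(x_0,y)}=(\{x_0\},\{a,b\},\{y\})$.
\begin{itemize}
\item We have $e(2)=1>0$ and $D_2=1$, yet $a\in\sigma_1$ has $d(a,x)=2=D_2+1$.
\item Hence $e(1)=1=e(2)$.
\item Also $\sigma_{m-e(m)}=\sigma_1\not\subseteq I(x_0,x)$, because $d(x_0,a)+d(a,x)=3>2$. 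The proposition itself survives here only because $2k=2$.
\end{itemize}
The vertex supplied by Lemma~\ref{l:clique-in-the-sphere} is $t=x$, which sits at level $j=2$. That is exactly the case $d(t,x_0)\in\{j,j+1\}$ you left open, and no argument can close it.

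This is not a degenerate phenomenon. Take the triangular lattice: vertices $(p,q)\in\mathbb{Z}^2$, with $(p,q)$ adjacent to $(p\pm1,q)$, $(p,q\pm1)$, $(p+1,q+1)$, $(p-1,q-1)$. Put $x_0=(0,0)$, $y=(6,3)$, $x=(6,4)$. Then $\sigma_5=\{(5,3),(5,2)\}$ and $\sigma_4=\{(4,2)\}$, and $e(6)=e(5)=1$, $e(4)=0$. Along a normal clique path the excess can decrease only every other step, and this is where the factor $2$ in the bound $2k$ comes from.

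So a one-step monotonicity statement cannot drive the proof. Even a two-step version of Claim~2, if you could prove it, would only give $i\le 2e(m)=2(k+m-n)$, which exceeds $2k$ when $m>n$.

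\textbf{How the paper proceeds.} It does not track distances along the path. Instead it inducts on $k=d(x,y)$ using a quasi-median $x_0'y'x'$ of $x_0,y,x$:
\begin{itemize}
\item If $y'\neq y$, it replaces $y$ by a neighbour $z\in I(y,y')$.
\item If $x'\neq x$, it replaces $x$ by a neighbour $u\in I(x,x')$ and uses $I(x_0,u)\subseteq I(x_0,x)$.
\item If $x_0'yx$ is a metric triangle, it replaces $y$ by a neighbour $z\in I(y,x_0')$ and $x$ by a neighbour $u$ with $d(z,u)=k-1$, obtained via Lemma~\ref{strongly-equilateral}.
\end{itemize}
Whenever $y$ is replaced by $z$, one has $z\in\sigma_{m-1}$ by (INC). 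The key tool is then Lemma~\ref{lem: inclalt}. It gives the alternating inclusions $\sigma_{m-2}\subseteq\sigma'_{m-2}$, $\sigma'_{m-3}\subseteq\sigma_{m-3},\dots$ between $\gamma_{(x_0,y)}$ and $\gamma_{(x_0,z)}=(\sigma'_0,\dots,\sigma'_{m-1})$. Your Claim~1 is correct and is used the same way there. Together these transfer the inductive conclusion from $\gamma_{(x_0,z)}$ to $\gamma_{(x_0,y)}$ at a cost of at most two indices per unit decrease of $k$.
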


\begin{proof} We proceed by induction on $k=d(x,y)$. We can suppose that $k>0$, otherwise we are done.  Let $x'_0y'x'$ be a quasi-median of the triplet $x_0,y,x$. By Lemma \ref{strongly-equilateral},  $x'_0y'x'$ is a strongly equilateral metric triangle of size $k'\le k$ (thus $d(x',t)=k'$ for any $t\in I(x'_0,y')$).

\begin{case} $y'\ne y$.
\end{case}

Then any neighbour $z$ of $y$ in $I(y,y')\subset I(y,x)$  is  contained in $I(x_0,y)$.
By (INC), $z$ belongs to the before last clique $\sigma_{m-1}$ of the normal clique path $\gamma_{(x_0,y)}$. Also $d(x_0,z)=m-1$ holds.
Consider the normal clique path $\gamma_{(x_0,z)}=(\{x_0\} =\sigma'_0,\sigma'_1,\ldots, \sigma'_{m-2},\sigma'_{m-1}=\{ z\})$.
Since $z\in \sigma_{m-1}$, repeatedly applying Lemma \ref{lem: inclalt}, we will conclude that
$\sigma_{m-2}\subseteq  \sigma'_{m-2}, \sigma'_{m-3}\subseteq \sigma_{m-3},\ldots$. Since $d(z,x)=k-1$, by the induction hypothesis there exists an index $i\le 2(k-1)$ such that $\sigma'_0,\sigma'_1,\ldots, \sigma'_{m-1-i}$
are contained in the interval $I(x_0,x)$. If $\sigma_{m-1-i}\subseteq \sigma'_{m-1-i}\subset I(x_0,x)$, then from the definition of the cliques $\sigma_{m-1-i-1}, \sigma_{m-1-i-2},\ldots, \sigma_1,\sigma_0$, we conclude that
they all are contained in $I(x_0,x)$.  Otherwise, if $\sigma'_{m-1-i}\subseteq \sigma_{m-1-i}$, then $\sigma_{m-1-i-1}\subseteq \sigma'_{m-1-i-1}\subseteq I(x,x_0)$ and again we conclude that all $\sigma_{m-1-i-1},  \sigma_{m-1-i-2},\ldots, \sigma_1, \sigma_0$ are contained in $I(x_0,x)$. Since $i\le 2k-2$, in both cases we conclude that the cliques $\sigma_0,\sigma_1,\ldots,\sigma_{m-2k}$ are contained in $I(x_0,x)$.

\begin{case} $x'\ne x$.
\end{case}

Let $u$ be a neighbour of $x$ in $I(x,x')$. Then $d(y,u)=k-1$ and we can apply the induction hypothesis to $\gamma_{(x_0,y)}$ and $u$ to deduce that the cliques $\sigma_0,\sigma_1,\ldots,\sigma_{m-i}$ are contained in $I(x_0,u)$ for some $i\le 2(k-1)$. Since $I(x_0,u)\subseteq I(x_0,x)$, we also conclude that  $\sigma_0,\sigma_1,\ldots,\sigma_{m-i}$ are contained in $I(x_0,x)$ for $i\le 2(k-1)\le 2k$.

\begin{case} $x'=x$ and $y'=y$.
\end{case}

Then $x'_0yx$ is a  metric triangle of size $k$. Pick any neighbour $z$ of $y$ in $I(y,x'_0)$. By Lemma \ref{strongly-equilateral}, all vertices of $I(x'_0,y)$ (in particular, $z$) have distance $k$ to $x$. This implies that $I(x'_0,z)\cap I(z,x)=\{ z\}$. Since $x'_0yx$ is a metric triangle, we also have $I(x'_0,z)\cap I(x'_0,x)=\{ x'_0\}$. Therefore, any quasi-median of the triplet $x'_0,z,x$ is a metric triangle of size $k-1$ of the form $x'_0zu$, where $u$ is a neighbour of $x$ in $I(x,z)\cap I(x,x'_0)$. 
In particular, we get $d(z,u)=k-1$. 

By (INC),  $z$ belongs to the before last clique $\sigma_{m-1}$ of the normal clique path $\gamma_{(x_0,y)}$. 
Consider the normal clique path $\gamma_{(x_0,z)}=(\{x_0\} =\sigma'_0, \sigma'_1,\ldots, \sigma'_{m-2},\sigma'_{m-1}=\{ z\})$. Since $z\in \sigma_{m-1}$, repeatedly applying Lemma \ref{lem: inclalt}, we will conclude that
$\sigma_{m-2}\subseteq  \sigma'_{m-2}, \sigma'_{m-3}\subseteq \sigma_{m-3},\ldots$. 
Since $d(z,u)=k-1$, by the induction hypothesis, there exists an index $i\le 2(k-1)$ such that the cliques  $\sigma'_0,\sigma'_1,\ldots, \sigma'_{m-1-i}$ are contained in the interval $I(x_0,u)$.
\sloppy If $\sigma_{m-1-i}\subseteq \sigma'_{m-1-i}\subset I(x_0,u)$, then from the definition of the cliques $\sigma_{m-1-i-1}, \sigma_{m-1-i-2} , \ldots , \sigma_1 , \sigma_0$ of $\gamma_{(x_0,y)}$ and since $I(x_0,u)\subset I(x_0,x)$, we conclude that all these cliques are contained in the interval $I(x_0,x)$. 
Since $i\le 2k-2$,  the cliques $\sigma_0,\sigma_1,\ldots,\sigma_{m-2k}$ are contained in $I(x_0,x)$, as required. 
Otherwise, if $\sigma'_{m-1-i}\subseteq \sigma_{m-1-i}$, then $\sigma_{m-1-i-1}\subseteq \sigma'_{m-1-i-1}\subseteq I(x_0,u)\subseteq I(x_0,x)$ and again
we conclude that all the cliques $\sigma_{m-1-i-1}, \sigma_{m-1-i-2},\ldots, \sigma_1,\sigma_0$ are contained in $I(x_0,x)$. Since $i\le 2k-2$, again we conclude that the
cliques $\sigma_0,\sigma_1,\ldots,\sigma_{m-2k}$ are contained in the interval $I(x_0,x)$, concluding the proof.
\end{proof}

\section{The proof of the Main Theorem}\label{sec:Property_A}

We recall a combinatorial criterion for showing that a space has Property A.
This criterion is due to \v{S}pakula and Wright \cite{SpWr}, and is inspired by Brown and Ozawa's \cite{BrOz} proof that hyperbolic groups act amenably on their boundaries.

\begin{proposition}[\cite{SpWr},Proposition 3.1]\label{prop:criterion}
    Let $X$ be a uniformly locally finite, discrete metric space. Suppose that there is an assignment of a nonempty set $S(x,k,\ell)\subseteq X$ for every $\ell\in\mathbb{N}$, $k\in\{1,\ldots,3\ell\}$ and $x\in X$ such that:
    \begin{enumerate}
        \item \label{item:1}for every $\ell\in \mathbb{N}$ there exists $r_\ell>0$ such that each $S(x,k,\ell)$ is included in the ball $B_{r_\ell}(x)$ for all $x\in X$ and $k\in\{1,\ldots,3\ell\}$;
        \item \label{item:2}for every $x,y\in X$, $\ell\geq d(x,y)$ and $k\in\{ \ell+1,\ldots,2\ell\}$ we have inclusions $S(x,k-d(x,y),\ell) \subseteq S(x,k,\ell) \cap S(y,k,\ell)$ and $S(x,k,\ell)\cup S(y,k,\ell) \subseteq S(x,k+d(x,y),\ell)$;
        \item \label{item:3}there exists a function $p$ such that $|S(x,k,\ell)|\leq p(\ell)$ for every $x\in X$, $\ell\in \mathbb{N}$ and $k\in\{1,\ldots,3\ell\}$ with $\lim_{n\to\infty}p(n)^{1/n}=1$.
    \end{enumerate}
\end{proposition}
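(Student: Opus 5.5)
The statement is the \v{S}pakula--Wright criterion: under (\ref{item:1})--(\ref{item:3}), $X$ has Property~A. I would prove it through the standard equivalent formulation of Property~A for uniformly locally finite spaces. One needs, for every $R,\epsilon>0$, a family of finitely supported probability measures $\mu_x$ on $X$ with $\operatorname{supp}\mu_x\subseteq B_S(x)$ for a uniform $S$, and with $\|\mu_x-\mu_y\|_1\le\epsilon$ whenever $d(x,y)\le R$. Such a family can be converted into the sets $A_x\subseteq X\times\mathbb N$ of Definition~\ref{d:A} by the usual rational approximation of the weights. Given $R$, I would pick $\ell\ge R$ large (to be fixed at the end) and set
\[
\mu_x=\frac1\ell\sum_{k=\ell+1}^{2\ell}\nu_{x,k},\qquad \nu_{x,k}=\frac{1}{|S(x,k,\ell)|}\chi_{S(x,k,\ell)} .
\]
By (\ref{item:1}), each $\mu_x$ is supported in $B_{r_\ell}(x)$, so $S=r_\ell$ works for this $\ell$.

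The first step is an elementary estimate for uniform measures on nested sets. If $P\subseteq Q\subseteq T$ and $P\subseteq Q'\subseteq T$ are finite and nonempty, then
\[
\|\nu_Q-\nu_{Q'}\|_1\le 4\Bigl(1-\frac{|P|}{|T|}\Bigr).
\]
This follows by comparing each of $\nu_Q,\nu_{Q'}$ with $\nu_P$. Let $d=d(x,y)\le R\le\ell$ and $k\in\{\ell+1,\dots,2\ell\}$. By (\ref{item:2}), both $S(x,k,\ell)$ and $S(y,k,\ell)$ lie between $S(x,k-d,\ell)$ and $S(x,k+d,\ell)$. Note that $k\pm d\in\{1,\dots,3\ell\}$, so these sets are defined. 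Writing $a_j=|S(x,j,\ell)|$, we get
\[
\|\mu_x-\mu_y\|_1\le\frac4\ell\sum_{k=\ell+1}^{2\ell}\Bigl(1-\frac{a_{k-d}}{a_{k+d}}\Bigr).
\]

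The main step, and the only place where (\ref{item:3}) enters, is to show that the average of $r_k=a_{k-d}/a_{k+d}\in(0,1]$ is close to $1$. By the AM--GM inequality, $\frac1\ell\sum_k r_k\ge(\prod_k r_k)^{1/\ell}$. Split the indices $k$ into the $2d$ residue classes modulo $2d$. Within each class the product of the $r_k$ telescopes to a ratio $a_{j}/a_{j'}$, which is at least $1/p(\ell)$ because $1\le a_j\le p(\ell)$. Hence $\prod_k r_k\ge p(\ell)^{-2d}$, and therefore
\[
\|\mu_x-\mu_y\|_1\le 4\bigl(1-p(\ell)^{-2R/\ell}\bigr).
\]
Since $p(\ell)^{1/\ell}\to1$, this tends to $0$ as $\ell\to\infty$ with $R$ fixed. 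So for given $R,\epsilon$ we choose $\ell\ge R$ large enough to make it at most $\epsilon$.

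The one point needing care is the telescoping. The chain in each residue class must stay inside the admissible range $\{1,\dots,3\ell\}$, and it does, since $k\pm d$ ranges over $[\ell+1-d,\,2\ell+d]\subseteq[1,3\ell]$. The chain also never needs monotonicity of $j\mapsto a_j$: each partial product is a ratio of two cardinalities, and these are bounded between $1$ (the sets are nonempty) and $p(\ell)$. Finally, we pass from measures to sets: replace $\mu_x$ by $\lfloor N\mu_x\rfloor$ counted with multiplicity in $X\times\mathbb N$, for $N$ large. This gives Definition~\ref{d:A}, using uniform local finiteness to bound the supports uniformly.
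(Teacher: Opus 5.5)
The paper gives no proof of this proposition. It is quoted from \v{S}pakula--Wright, and as printed even its conclusion (that $X$ has Property~A, which is how it is used in the proof of the Main Theorem) is left implicit; you supplied that conclusion correctly.

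Your argument is a correct, self-contained proof. It follows the Brown--Ozawa-style averaging scheme that the paper names as the inspiration for the criterion:
\begin{itemize}
\item average the uniform measures on $S(x,k,\ell)$ over the window $k\in\{\ell+1,\dots,2\ell\}$;
\item use (\ref{item:2}) to place $S(x,k,\ell)$ and $S(y,k,\ell)$ between $S(x,k-d,\ell)$ and $S(x,k+d,\ell)$;
\item control the average of $a_{k-d}/a_{k+d}$ by telescoping along residue classes modulo $2d$.
\end{itemize}
Your use of AM--GM is interchangeable with the other natural route, bounding $1-r\le\log(1/r)$ and telescoping the logarithms. Both give an estimate of order $R\log p(\ell)/\ell$.

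What writing it out buys over the citation is transparency about the hypotheses. The window together with $d\le\ell$ is exactly why $k$ must run over $\{1,\dots,3\ell\}$. Only the subexponential growth $p(\ell)^{1/\ell}\to1$ is used, so the quadratic bound established in the proof of the Main Theorem is far more than needed.

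Uniform local finiteness is also not actually needed in your last step. The values of $\mu_x$ lie in $\frac1N\mathbb{Z}$ for $N=\ell\cdot\mathrm{lcm}(1,\dots,\lfloor p(\ell)\rfloor)$. Setting $A_x=\{(z,n):1\le n\le N\mu_x(z)\}$ then gives $|A_x|=N$ and $|A_x\triangle A_y|/|A_x\cap A_y|=\|\mu_x-\mu_y\|_1/(1-\tfrac12\|\mu_x-\mu_y\|_1)$ exactly. So the passage to Definition~\ref{d:A} needs no rounding and no a priori bound on support sizes.
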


\begin{proof}[Proof of the Main Theorem]
    Let $X$ be a uniformly locally finite fit systolic complex.
    We need to define sets $S(x,k,\ell)$ as in Proposition \ref{prop:criterion}.
    We begin by fixing a base vertex $x_0\in X$.
    Take a vertex $x\in X$, $\ell\in\mathbb{N}$, and $k\in\{1,\ldots,3\ell\}$.
    For every $y\in B_k(x)$ consider the normal clique path $\gamma_{(x_0,y)}$ from $x_0$ to $y$ and let $\sigma_{(x_0,y)}^{(d(x_0,y)-6\ell)}$ be the clique at distance $6\ell$ from $y$ in $\gamma_{(x_0,y)}$ (or $x_0$ in case $d(x_0,y)<6\ell$).
    We set $S(x,k,\ell)=\bigcup_{y\in B_k(x)}\sigma_{(x_0,y)}^{(d(x_0,y)-6\ell)}$.
    Condition (\ref{item:1}) follows directly from the definition of $S(x,k,\ell)$ and the triangle inequality by setting $r_\ell=9\ell$. To see that condition (\ref{item:2}) is satisfied we will show one inclusion, the other follows similarly. Let us show that $S(x,k-d(x,y),\ell) \subseteq S(x,k,\ell) \cap S(y,k,\ell)$. That $S(x,k-d(x,y),\ell) \subseteq S(x,k,\ell)$ is clear, as $k-d(x,y)<k$, and $S(x,k-d(x,y),\ell) \subseteq S(y,k,\ell)$ because by the triangle inequality we have the inclusion $B_{k-d(x,y)}(x)\subseteq B_{k}(y)$.

    We need to also show that the sets $S(x,k,\ell)$ satisfy condition (\ref{item:3}).
    Since $X$ is a fit systolic complex, there exists constants $B,C>0$ such that each of the level sets
    \[L_{6\ell-k}(x,x_0),L_{6\ell-k+1}(x,x_0), \ldots, L_{6\ell+k}(x,x_0)\]
    is $(B,C)$-quasi-isometric to a segment and has diameter at most $6\ell+k \leq 9\ell$.
    Additionally, the complex $X$ is uniformly locally finite, so the cardinality of the union of these level sets can be quadratically bounded in terms of $\ell$ (note that this bound can be made independent of $x$ and also of $k$, as $k\leq 3\ell$).
    Now, by Proposition \ref{p:convergence_of_normal_clique_paths}, each of the cliques $\sigma_{(x_0,y)}^{(d(x_0,y)-6\ell)}$ is contained in one of the level sets $L_{6\ell-k}(x,x_0),L_{6\ell-k+1}(x,x_0), \ldots, L_{6\ell+k}(x,x_0)$.
    Thus, the cardinality of $S(x,k,\ell)$ can be bounded by a quadratic in $\ell$ function $p$, hence satisfying (\ref{item:3}). This concludes the proof of the theorem. 
\end{proof}

\section{Large-type Artin groups are fit}
\label{sec:Artinfit}

In this section we show that for every almost-large-type Artin group $A_\Gamma$ there exists a uniformly locally finite fit systolic complex $X_\Gamma$  on which $A_\Gamma$ acts geometrically. This proves Corollary A.

We begin by recalling the systolization of the Cayley complex of an almost-large-type Artin group constructed by Huang and Osajda \cite[Definitions 3.7 and 5.3]{HuOs_systolic}. The short description below follows closely the one from \cite[Section 3]{BlVa}.
Let $\tilde{K}_\Gamma$ be the Cayley complex of $A_\Gamma$ corresponding to the standard presentation.
For each edge in $\Gamma$ between vertices $s,t \in V(\Gamma)$ there is a $2$-cell with boundary length $2m_{st}$ that lifts to copies in $\tilde{K}_\Gamma$.
Subdivide each of these lifts to a $2m_{st}$-gon by adding $m_{st}-2$ interior vertices, and call each of these subdivided $2$-cells a precell (see  \cite[Figure 6]{HuOs_systolic} or \cite[Figure 1]{BlVa}).
We call these \emph{new} vertices, and the old vertices \emph{real} vertices.
Now, if two precells $C_1$, $C_2$ intersect at more than one edge, first connect interior vertices $v_1 \in C_1$, $v_2 \in C_2$ such that both $\{v_1\} \cup e$ and $\{v_2\} \cup e$ span triangles for some edge $e \subset C_1 \cap C_2$, and then add edges between interior vertices of $C_1$ and $C_2$ to form a zigzag as in \cite[Figures 7,8,9]{HuOs_systolic} or \cite[Figure 2]{BlVa}.
The flag completion of this complex is systolic, and is denoted by $X_\Gamma$.
It follows from the construction that it is uniformly locally finite.

Now we give a precise description of the links of vertices in $X_\Gamma$.
To do so, we need the following definition:

\begin{figure}[h!]
	\begin{center}
	\includegraphics[scale=0.6]{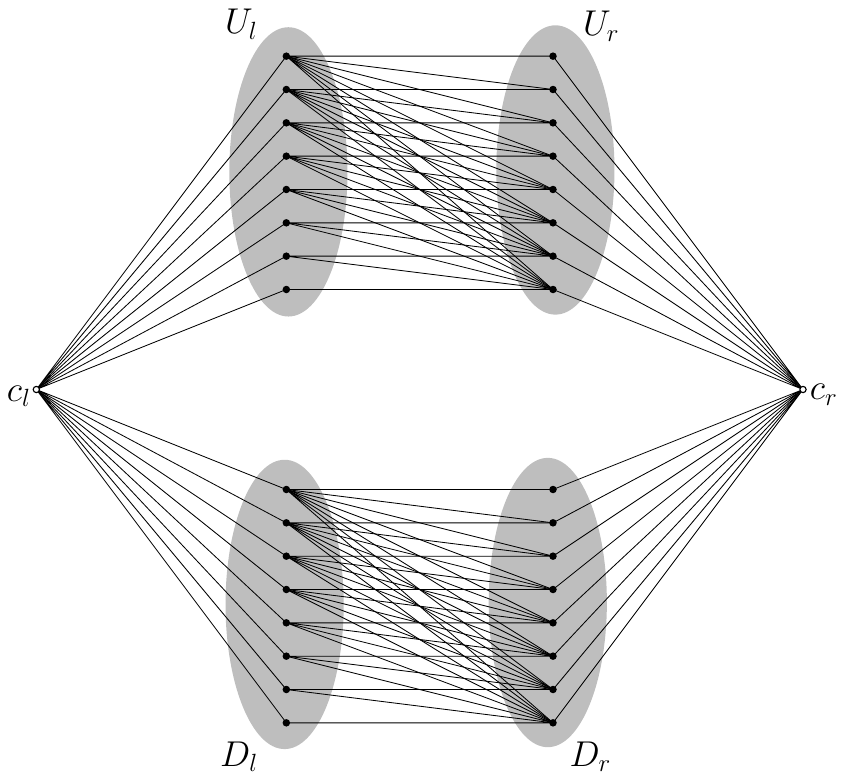}
	\end{center}
	\caption{A thick hexagon. The edges in the complete subgraphs $U_l$, $U_r$, $D_l$ and $D_r$ are not drawn.}
	\label{f:model}
\end{figure}

\begin{definition}
    A \emph{thick hexagon} is a finite simplicial graph such that its vertex set $J$ can be partitioned as $J=\{c_l\} \sqcup \{c_r\} \sqcup U_l \sqcup U_r \sqcup D_l \sqcup D_r$ and the following hold (cf.\ Figure~\ref{f:model}):
    \begin{enumerate}
        \item the vertices adjacent to $c_l$ (respectively, $c_r$) are $U_l \cup D_l$ (respectively, $U_r \cup D_r$);
        \item there are no edges between $U_l\cup U_r$ and $D_l \cup D_r$.
        \item $U_l$, $U_r$, $D_l$ and $D_r$ span cliques of the same cardinality.
        \item it is possible to order the vertices of $U_l$ (respectively, $D_l$) as $\{v_1^{U_l},\ldots,v_k^{U_l}\}$ (respectively, $\{v_1^{D_l},\ldots,v_k^{D_l}\}$) and the vertices of $U_r$ (respectively, $D_r$) as $\{w_1^{U_r},\ldots,w_k^{U_r}\}$ (respectively, $\{w_1^{D_r},\ldots,w_k^{D_r}\}$) so that for each $1 \leq i \leq k$, $v_i^{U_l}$ is adjacent to $w_j^{U_r}$ (respectively, $v_i^{D_l}$ is adjacent to $w_j^{D_r}$) for all $j \geq i$.
    \end{enumerate}
\end{definition}

\begin{proposition} \label{p:new_links} \cite[Subsection 4.3]{HuOs_systolic} The link $\Lk(x)$ of any new vertex $x$ of $X_\Gamma$ is a thick hexagon.
\end{proposition}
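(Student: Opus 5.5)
The plan is to read off the link of a new vertex directly from the local structure of $X_\Gamma$ near $x$, following the construction in \cite[Definitions 3.7 and 5.3]{HuOs_systolic}. A new vertex $x$ lies in the interior of exactly one precell $C$, which is the subdivided lift of a $2m_{st}$-gon for some edge $st$ of $\Gamma$. So the first step is to list the neighbours of $x$ and sort them into three types:
\begin{enumerate}
\item the at most two interior vertices of $C$ adjacent to $x$ along the subdivision;
\item the real vertices on $\partial C$ adjacent to $x$;
\item the interior vertices of other precells $C'$ that meet $C$ in more than one edge and are joined to $x$ by the zigzag edges.
\end{enumerate}

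Next I would fix the geometry of the subdivision. The $m_{st}-2$ interior vertices form a path through the middle of the $2m_{st}$-gon, and each of them is coned to an ``upper'' and a ``lower'' boundary arc. I then set $c_l$ and $c_r$ to be the left and right neighbours of $x$ in this path; at an end of the path, a boundary vertex plays this role instead. The vertices above the path near $x$ are split into $U_l$ and $U_r$ according to whether they are adjacent to $c_l$ or to $c_r$, and the vertices below are split into $D_l$ and $D_r$ in the same way. Condition (1) of the definition then holds because $c_l$ and $c_r$ only see their own side. Condition (2) holds because $C$ separates the upper side from the lower side locally, and no zigzag edge crosses the middle path.

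The key step is to understand the zigzag vertices. A precell $C'$ sharing with $C$ a path of boundary edges lies on the upper side or on the lower side. The precells meeting $C$ along a common boundary path form a chain (this is the $\tilde K_\Gamma$ structure), and the zigzag between the interior vertices of consecutive precells produces cliques. I would show that the interior vertices of all precells stacked above $C$ near $x$, together with the corresponding boundary vertices, form the cliques $U_l$ and $U_r$, and similarly below. The half-triangular adjacency pattern in condition (4) ($v_i$ adjacent to $w_j$ for $j\ge i$) comes directly from the zigzag ordering. Equal cardinality, condition (3), follows from the symmetry between the upper and lower sides. That symmetry exists because the relator $aba\cdots=bab\cdots$ is palindromic in structure, so every precell glued above has a matching precell glued below, via the left/right translation by the generators.

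The main obstacle will be the bookkeeping in this last step: verifying exactly which zigzag edges exist, and ruling out extra edges between $U_l\cup U_r$ and $D_l\cup D_r$ that could come from flag completion. Flag completion adds no edges, so this reduces to checking the edge list of the construction. I would handle it by a case analysis depending on whether $x$ is an end vertex or a middle vertex of the interior path, and in each case count the precells through the two boundary arcs adjacent to $x$ using the large-type (or almost-large-type) hypothesis. That hypothesis is what guarantees that at most two precells overlap along any boundary path.
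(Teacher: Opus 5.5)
\paragraph{Verdict.} There is a genuine gap: your strategy is the right kind of computation, but the step carrying the content rests on a false count. The paper gives no proof here and only cites \cite[Subsection 4.3]{HuOs_systolic}, which does this kind of direct computation. Your plan matches it in outline: read $\Lk(x)$ off the precells, take $c_l,c_r$ to be the neighbours of $x$ on the interior path (or the extreme real vertices), and sort the remaining neighbours by which boundary arc of $C$ they lie over.

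\paragraph{The overlap count is wrong.} The key step, describing the zigzag neighbours, relies on the claim that at most two precells overlap along a boundary path. That claim is false, and the (almost) large-type hypothesis plays no role here. Any precell meeting $C$ in two or more edges lies in the same coset $gA_{ab}$, so the link of a new vertex depends only on $m=m_{ab}\ge 3$. Write $C=\Pi_g$ for the precell with left-most vertex $g$, with upper arc $u_j=gp_j$, where $p_j$ is the length-$j$ prefix of $aba\cdots$.
\begin{itemize}
\item An edge $u_iu_{i+1}$ lies in exactly $m$ precells of the coset, one for each occurrence of its label in the relator.
\item For $1\le i\le m-2$, each of the other $m-1$ precells meets $C$ in a path of length at least $2$ through that edge. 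The precells with left-most vertex $u_j$, $1\le j\le i$, contain $u_j\cdots u_m$. The precells with right-most vertex $u_t$, $i+1\le t\le m-1$, contain $u_0\cdots u_t$.
\item For instance, when $m=4$ the path $u_0u_1u_2$ lies in three precells. Your claim holds only for $m=3$.
\end{itemize}
So $x$ has at least $m-1$ zigzag neighbours over each arc, and all of them must be placed in the appropriate cliques. A case analysis built on ``at most two'' produces the wrong link.

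\paragraph{The planar picture is misleading.} Some of these precells contain the shared path in their own upper arc; for example, $\Pi_{gab}$ contains $u_2\cdots u_m$ in its upper arc. Hence $\tilde K_\Gamma$ is not locally a plane with $C$ in the middle. ``Stacked above/below'' and ``$C$ separates the two sides locally'' therefore do not prove condition (2). That condition needs an actual check that no vertex attached over the upper arc is adjacent to one attached over the lower arc.

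\paragraph{Condition (3) does not follow from symmetry.} The symmetry of the coset subcomplex induced by $a\leftrightarrow b$, composed with a translation fixing $C$, swaps the two arcs and fixes every interior vertex of $C$. It therefore gives $|U_l|=|D_l|$ and $|U_r|=|D_r|$, provided you check that the zigzag rule is equivariant. It does not give $|U_l|=|U_r|$. The left--right reflection of $C$, coming from $a\mapsto a^{-1}$, $b\mapsto b^{-1}$, sends the $i$-th interior vertex to the $(m-1-i)$-th, so it does not fix $x$ unless $x$ is the middle vertex.

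\paragraph{What is needed instead.} The equality $|U_l|=|U_r|$ and the half-graph pattern in condition (4) have to come from the count above. Among the $m-1$ other precells through $u_iu_{i+1}$:
\begin{itemize}
\item exactly one has overlap with $C$ that does not continue left of $u_i$, namely the one with left-most vertex $u_i$;
\item exactly one has overlap that does not continue right of $u_{i+1}$, namely the one with right-most vertex $u_{i+1}$;
\item all the others continue in both directions.
\end{itemize}
You then need the zigzag rule to decide which interior vertices of each such precell are adjacent to $x$, to $c_l$, to $c_r$, and to one another. This explicit analysis of the zigzag edges is the substance of the cited computation, and your proposal only asserts its outcome.
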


\begin{proposition} \label{p:real_links} \cite[Subsection 4.2 and Lemma 5.4]{HuOs_systolic} The link $\Lk(x)$ of any real vertex $x$ of $X_\Gamma$ has the following structure.
For each edge with labels $a,b$ in $\Gamma$ it has a thick hexagon, where
\begin{itemize}
    \item $v_{m_{ab}-1}^{U_l}$ is a real vertex corresponding to an incoming edge labelled by $a$;
    \item $w_1^{U_r}$ is a real vertex corresponding to an outgoing edge labelled by $b$;
    \item $v_{m_{ab}-1}^{D_l}$ is a real vertex corresponding to an incoming edge labelled by $b$;
    \item $w_1^{D_r}$ is a real vertex corresponding to an outgoing edge labelled by $a$;
\end{itemize}
and these thick hexagons are glued as follows. A thick hexagon coming from an edge labelled by $a,b$ is glued to a thick hexagon coming from an edge labelled by $a,c$ at the real vertices coming from incoming and outgoing edges labelled by $a$ listed above; see Figure~\ref{f:gluing}.
\end{proposition}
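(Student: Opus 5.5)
The plan is to read off $\Lk(x)$ directly from the construction of $X_\Gamma$, treating one edge $ab$ of $\Gamma$ at a time and then describing how the pieces are glued. This follows the local analysis in \cite[Subsections 4.2, 4.3]{HuOs_systolic}.

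First I would list the vertices of $\Lk(x)$ for a real vertex $x$. They are of two kinds.
\begin{itemize}
\item Real vertices $x s^{\pm 1}$, for $s\in V(\Gamma)$. These correspond to the outgoing and incoming edges of $\tilde K_\Gamma$ at $x$ labelled by $s$.
\item New (interior) vertices of the precells having $x$ on their boundary.
\end{itemize}
All edges are either edges of the subdivided precells or zigzag edges, and $X_\Gamma$ is the flag completion. Hence $\Lk(x)$ is determined by the $1$-skeleton restricted to $N(x)$.

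Next, fix an edge $ab$ of $\Gamma$. The lifts of the $2m_{ab}$-gon containing $x$ are indexed by the position of $x$ on the boundary word $\underbrace{aba\cdots}_{m_{ab}}\bigl(\underbrace{bab\cdots}_{m_{ab}}\bigr)^{-1}$. I would distinguish three kinds of position:
\begin{itemize}
\item $x$ is the source of the relation;
\item $x$ is the sink of the relation;
\item $x$ is an intermediate vertex on one of the two sides.
\end{itemize}
In the subdivided precell, the new vertices adjacent to $x$ form a path (a fan from $x$). Consecutive precells around $x$ with labels in $\{a,b\}$ share long boundary paths, namely common subwords of the alternating words. At these overlaps the zigzag edges are added. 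I would check the following correspondence with the definition of a thick hexagon:
\begin{itemize}
\item the new vertices of the precell in which $x$ is the source form the part adjacent to $c_l$ (respectively, $c_r$);
\item the families of new vertices coming from the precells where $x$ is intermediate assemble into the four cliques $U_l,U_r,D_l,D_r$, each of size $m_{ab}-1$;
\item the zigzag pattern between two overlapping precells yields exactly the staircase adjacency $v_i\sim w_j$ for $j\ge i$ of condition (4).
\end{itemize}
The extreme vertices $v^{U_l}_{m_{ab}-1}$, $w^{U_r}_1$, $v^{D_l}_{m_{ab}-1}$ and $w^{D_r}_1$ of these cliques are the real neighbours $xa^{-1}$, $xb$, $xb^{-1}$ and $xa$, respectively. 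Condition (2), that there are no edges between the up and down parts, follows because those precells meet $x$ only along a single edge or a single vertex, so no zigzag is added between them.

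Finally I would treat the gluing. Take precells coming from different edges $ab$ and $ac$ of $\Gamma$. Their boundary words share at most one letter consecutively, so two such precells intersect in at most an edge, and no zigzag edges join them. Consequently the only common vertices of the corresponding thick hexagons are the real vertices $xa^{\pm1}$, and there are no additional edges between the two hexagons. This gives exactly the gluing described in the statement.

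The main obstacle is the bookkeeping in the second step. One must verify that the zigzag edges produce precisely the staircase pattern of condition (4), with consistent orderings of all four cliques and the correct identification of the extreme vertices. I would do this by drawing the $2(m_{ab}-1)$ precells around $x$ explicitly and following \cite[Figures 7,8,9]{HuOs_systolic}. The flagness of $X_\Gamma$ then reduces the rest to checking edges only.
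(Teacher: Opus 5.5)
The paper gives no argument for this statement; it imports the description from \cite{HuOs_systolic} (Subsection~4.2 and Lemma~5.4). Your plan is the same local computation carried out there: read $\Lk(x)$ off the precells and zigzag edges, one edge $ab$ of $\Gamma$ at a time. Like the paper, you defer the actual content. That content is that the pairwise zigzags among the $m_{ab}-1$ ``up'' precells (which all contain the path $xa^{-1},x,xb$), and between these and the source and sink precells, produce cliques with the staircase pattern. So what you have is an outline, not a proof, and three points in it need correcting.

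\emph{The roles of $c_l$ and $c_r$.} You say the new vertices of the source precell ``form the part adjacent to $c_l$ (respectively, $c_r$)''. This is not right. For $m_{ab}\ge 3$ the source precell contributes exactly one new vertex to $\Lk(x)$, namely its interior vertex adjacent to $xa$ and $xb$, and this vertex \emph{is} $c_r$. In the $ab$-hexagon it is the only common neighbour of $w_1^{U_r}=xb$ and $w_1^{D_r}=xa$. Likewise the sink precell contributes $c_l$. Whether $c_r$ sees a given vertex of $U_r$ is then decided by intersection lengths. For example, the up-precell in which $x$ is adjacent to the sink tip meets the source precell only in the edge $x\,xb$, so it gets no zigzag. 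Its new vertex is $v_1^{U_l}$, not a vertex of $U_r$.

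\emph{The case $m_{ab}=2$.} This is allowed in almost-large type, and there a precell has no interior vertices. So $c_l$ and $c_r$ cannot be new vertices. They are the real vertices $x(ab)^{-1}$ and $xab$, joined to $x$ by the diagonals of the square precells, and all four cliques are single real vertices. Your inventory of $N(x)$, consisting only of $xs^{\pm1}$ and new vertices, misses them. This case must be treated separately (cf.\ Lemma~5.4 of \cite{HuOs_systolic}).

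\emph{The intersection lemma.} The inference ``their boundary words share at most one letter consecutively, so two such precells intersect in at most an edge'' is not valid on its own. You need the fact that two $2$-cells of $\tilde K_\Gamma$ meet in a connected set (empty, a vertex, or a path). This is a nontrivial property of large-type Artin groups going back to Appel--Schupp, and it is what the zigzag construction relies on. The same fact is what justifies your condition~(2): an up and a down precell share no edge at $x$, hence meet only in $x$. It also determines which pairs of precells at $x$ receive zigzags at all. Without it, a second common edge far from $x$ would trigger zigzag edges that could break the claimed structure.
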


\begin{figure}[h!]
	\begin{center}
	\includegraphics[scale=0.65]{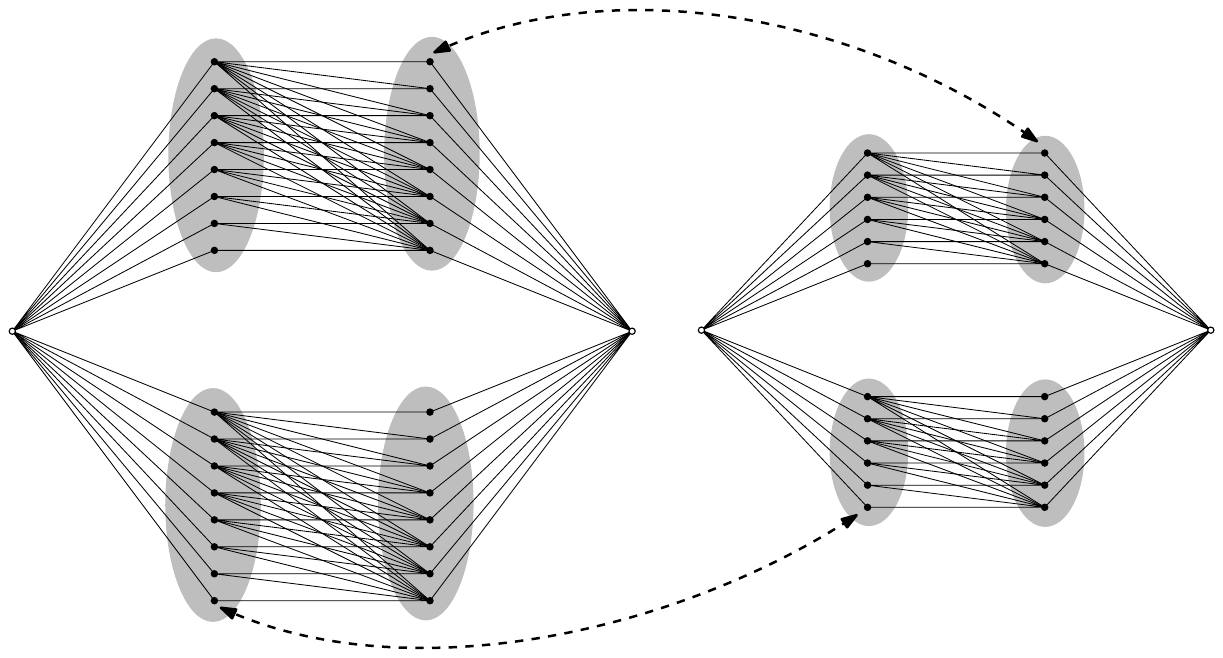}
	\end{center}
	\caption{Gluing of two thick hexagons in the link of a real vertex.}
	\label{f:gluing}
\end{figure}

From the structure of the links and Corollary~\ref{c:fit_condition} we can easily deduce the main result of this section. 

\begin{proposition}\label{prop:fit_Artin} For every almost-large-type Artin group $A_\Gamma$ the level sets in intervals in the complex $X_\Gamma$ do not contain induced tripods and asteroids and hence are $(1,4)$-quasi-isometric to segments. In particular $X_\Gamma$ is fit.
\end{proposition}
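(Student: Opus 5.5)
By Corollary~\ref{c:fit_condition}, it suffices to show that no vertex link of $X_\Gamma$ contains an induced doubled asteroid. That corollary then gives that level sets contain no induced tripods or asteroids and are $(1,4)$-quasi-isometric to segments, so $X_\Gamma$ is fit. The vertices of $X_\Gamma$ are either new or real, so the argument splits into two cases, using the descriptions in Proposition~\ref{p:new_links} and Proposition~\ref{p:real_links}.

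First I record the shape of a doubled asteroid $D$. It consists of two triangles $T=\{a,b,c\}$ and $T'=\{a',b',c'\}$ with no edges between them. There are three pairwise non-adjacent tips $p,q,r$, with $p\sim a,a'$, $q\sim b,b'$ and $r\sim c,c'$. Every other pair is a non-edge. In particular $D$ contains three induced $4$-cycles $p\,a\,b\,q\,b'\,a'$-type paths closing up, and $D$ has no vertex of degree greater than $4$. The key features I will use are:
\begin{enumerate}
\item $D$ contains two vertex-disjoint triangles;
\item $D$ contains induced $6$-cycles such as $p\,a\,b\,q\,b'\,a'\,p$;
\item deleting any clique of $D$ leaves it connected.
\end{enumerate}

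\emph{New vertices.} Let $H=\Lk(x)$ be a thick hexagon. I would argue that every induced cycle of length at least $4$ in $H$ must pass through both $c_l$ and $c_r$. Away from $c_l$ and $c_r$, the graph $H$ consists of the two pieces $U_l\cup U_r$ and $D_l\cup D_r$, and each piece has a "staircase" adjacency with no edges between the pieces. I would show each piece is chordal. Indeed, each piece is an interval-type graph: order the vertices $v_1,\dots,v_k,w_1,\dots,w_k$, and the neighbourhoods are nested.

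Now take an induced $6$-cycle of $D$ embedded in $H$. It must pass through $c_l$ and $c_r$. Each of $c_l,c_r$ has neighbourhood $U\cup D$, which is the union of two cliques. The three tips and the two triangles then force at least three edge-disjoint routes between the $U$-side and the $D$-side. However, $H$ minus $\{c_l,c_r\}$ is disconnected into the $U$-part and the $D$-part. A three-tip structure would then need three routes from the $U$-part to the $D$-part, but only two vertices, $c_l$ and $c_r$, connect them. This gives a contradiction by a Menger-type count: $D$ has no $2$-vertex cut separating one triangle from the other, because there are three disjoint paths $a\,p\,a'$, $b\,q\,b'$, $c\,r\,c'$.

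\emph{Real vertices.} Here $\Lk(x)$ is a union of thick hexagons, glued pairwise at real vertices according to the edges of $\Gamma$. The gluing vertices are single vertices, that is, cut points between hexagons locally, or $2$-vertex connections along cycles in $\Gamma$. Since $D$ is $3$-connected between the two triangles, I would show that an induced copy of $D$ cannot cross a gluing vertex in a way that uses two hexagons for one triangle. Hence each triangle of $D$ lies within a single hexagon, because triangles lie in single hexagons: gluing happens at non-adjacent real vertices, so a clique cannot straddle two hexagons.

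The main obstacle is cycles in $\Gamma$. The link is then not tree-like in the hexagons; the three tip paths could travel through different hexagons around a triangle or square of $\Gamma$. Here I would use the almost-large-type hypothesis: there are no triangles of $\Gamma$ with a $2$-label and no squares with three $2$-labels. By the girth statements in \cite{HuOs_systolic}, this gives that the link has large "girth" between hexagon blocks, so that any induced cycle passing through several hexagons has length at least $6$ with specific structure. Combined with the short tip paths of $D$ (length $2$), this forces all of $D$ into one hexagon, where the new-vertex argument applies. I expect this last reduction to require the most careful bookkeeping.
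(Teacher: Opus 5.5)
\paragraph{Overall.} Your reduction is the paper's: exclude induced doubled asteroids from links and apply Corollary~\ref{c:fit_condition}. Your new-vertex case is essentially right, but the Menger step needs tightening. $D$ does have $2$-vertex cuts (the two neighbours of a tip), so you must say which cut $\{c_l,c_r\}$ induces. A cleaner version: every induced cycle of length $\ge 4$ in a thick hexagon passes through $c_l$ and $c_r$, whereas each vertex of $D$ lies on only two of the three induced cycles of $D$ of length $\ge 4$. These are $6$-cycles, not $4$-cycles as you wrote.

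\paragraph{The gap: real vertices.} You only sketch this case, and the reduction you aim for there ($D$ is forced into one hexagon) is false as soon as $\Gamma$ contains a triangle $abc$ with $m_{ab},m_{ac}\ge 3$. Write $s^{\mathrm{in}},s^{\mathrm{out}}$ for the real vertices of $\Lk(x)$, and $c_l^{st}$ for the vertex $c_l$ of the hexagon $H_{st}$ (the one adjacent to both incoming real vertices). Take
\[
T=\{c_l^{ab},\,v_1^{D_l},\,b^{\mathrm{in}}=v^{D_l}_{m_{ab}-1}\}\subseteq H_{ab},
\qquad
T'=\{c_l^{ac},\,v_1^{D_l'},\,c^{\mathrm{in}}\}\subseteq H_{ac},
\]
where $T'$ is defined analogously. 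Take as tips:
\begin{itemize}
\item $a^{\mathrm{in}}$, adjacent to $c_l^{ab}$ and $c_l^{ac}$;
\item $a^{\mathrm{out}}$, adjacent to $v_1^{D_l}$ and $v_1^{D_l'}$;
\item $c_l^{bc}$, adjacent to $b^{\mathrm{in}}$ and $c^{\mathrm{in}}$.
\end{itemize}
Distinct hexagons share only real vertices, and real vertices are pairwise non-adjacent. Together with the definition of a thick hexagon, this makes the configuration an induced doubled asteroid spread over three hexagons. Its induced cycles all have length $6$, so no girth statement from \cite{HuOs_systolic} can exclude it.

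\paragraph{It is realised by an interval.} For the $(3,3,3)$ triangle and $x=1$, put $u=b^{-1}a^{-1}$ and $v=c^{-1}a^{-1}$.
\begin{itemize}
\item $T\subseteq N(u)$ and $T'\subseteq N(v)$.
\item The three tips are at distance $2$ from both $u$ and $v$.
\item $d(u,v)=4$: shorter paths are ruled out by exponent sum and by passing to the Coxeter quotient $\tilde A_2$.
\end{itemize}
Hence $L_2(u,v)$ contains the induced tripod with centre $x$ and tips $a^{-1},a,c_l^{bc}$. So the first assertion of the statement fails at least in this case, and no bookkeeping will close your gap.

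\paragraph{Comparison with the paper.} The paper's one-line proof has the same blind spot. It places all triangles of links in $U_l\sqcup U_r$ or $D_l\sqcup D_r$, which overlooks triangles through $c_l$ such as $T$. It also never considers tip paths running through a third hexagon.

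\paragraph{Where your strategy does work.} For triangle-free $\Gamma$ it goes through. A length-two tip path between triangles in distinct hexagons $H\neq H'$ must pass through $H\cap H'$, which has at most two vertices. If $H\cap H'=\emptyset$, it must instead use the unique real vertex of $T$. Either way there are too few such vertices for three disjoint paths. For general $\Gamma$, fitness would need a different argument: bounding, rather than excluding, tripods and asteroids in level sets, via Proposition~\ref{prop:equivalent_definition}.
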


\begin{proof}
    By Corollary~\ref{c:fit_condition} it suffices to show that links of vertices in $X_\Gamma$ do not contain induced doubled asteroids as in Figure~\ref{f:triplane} (right).
    Note that triangles in the link of a vertex in $X_\Gamma$ can only appear in the upper ($U_l \sqcup U_r$) or lower ($D_l \sqcup D_r$) part of a thick hexagon.
    It is clear that no two such triangles can be connected by three disjoint paths of length two to form a doubled asteroid.
\end{proof}

\section{Graphical $C(3)$--$T(6)$ groups are fit}\label{sec:c3t6}

In this section we show that for every graphical $C(3)$--$T(6)$ complex $X$ there exists a ``thickening'' $\Th(X)$ which is a fit systolic complex. If $X$ is uniformly locally finite (in the sense explained after Lemma~\ref{lem:edge-pieces}), then $\Th(X)$ is uniformly locally finite. This proves Corollary B.
 
\subsection{Graphical small cancellation complexes}
\label{sec:smallcanc}
We define graphical  small cancellation complexes by closely following \cite[Section 6]{OsPry2018} and \cite[Section 6.4]{CCGHO25}, where the $C(6)$ and $C(4)$--$T(4)$ cases are studied, respectively. In this section we focus on $C(3)$--$T(6)$ complexes. In the following Section~\ref{sec:c6} we deal with $C(6)$ small cancellation complexes.

A map $X\to Y$ between CW complexes is \emph{combinatorial} if its restriction to every open cell of $X$ is a homeomorphism onto an open cell of $Y$.
A CW complex is \emph{combinatorial} if its attaching maps are combinatorial.
In what follows we say $2$-complex to mean combinatorial CW complex of dimension at most $2$.

Let $\Theta$ be a connected simplicial graph without vertices of degree $1$, and let $f\colon \Omega \to \Theta$ be a graph homomorphism. 
Decompose $\Omega = \coprod_i \Omega_i$, where each $\Omega_i$ is a connected component.
There is a natural way of constructing a 2-complex given this information.
First, let $\Theta$ be the 1-skeleton.
For every immersed cycle $C\rightarrow \Omega$, attach a $2$-cell to $\Theta$ along the composition $C\rightarrow \Omega \rightarrow \Theta$.
The resulting complex is called the \emph{graphical complex} associated to $f\colon \Omega \to \Theta$, and we denote it by $X_{(\Omega,\Theta)}$.

A \emph{piece} in $X_{(\Omega,\Theta)}$ is an immersed nontrivial path $\gamma: P\rightarrow \Theta$ which admits two distinct lifts to $\Omega$; that is, there exist two distinct paths $\gamma_1, \gamma_2:P\rightarrow \Omega$ such that $f\circ \gamma_1 = f\circ \gamma_2 = \gamma$ but $\gamma_1\neq \gamma_2$.

A \emph{disc diagram} $\varphi:D\to X_{(\Omega,\Theta)}$ is a combinatorial map, where $D$ is obtained from a combinatorial structure of $S^2$ by removing a $2$-cell.
A \emph{piece} in a disc diagram is a path $P\rightarrow D$ that factors in two distinct ways through 2-cells, i.e., there are 2-cells $R_m\rightarrow D$ and $R_n\rightarrow D$ such that $P\rightarrow D$ factors both as $P\rightarrow R_m \rightarrow D$ and $P\rightarrow R_n \rightarrow D$, and there is no isomorphism $R_m\rightarrow R_n$ making the following diagram commute:
\[
\begin{tikzcd}
    P \arrow[r] \arrow[d] & R_m \arrow[d] \arrow[ld]\\
    R_n \arrow[r] & D
\end{tikzcd}
\]

A disc diagram $D\rightarrow X_{(\Omega,\Theta)}$ is \emph{reduced} if for every piece $P\rightarrow D$ in the disc diagram, $P\rightarrow D\rightarrow X_{(\Omega,\Theta)}$ is again a piece. 

\begin{definition}
    \label{def:smallcanc}
    For $p,q\in \mathbb{N}^+$, we say that the graphical complex $X_{(\Omega,\Theta)}$ associated to $f:\Omega \to \Theta$ satisfies 
    \begin{itemize}
        \item the \emph{C(p) condition} if there is no immersed cycle $C\rightarrow\Omega$ such that $C\rightarrow \Omega \rightarrow \Theta$ is the concatenation of less than $p$ pieces;
        \item the \emph{T(q) condition} if for every reduced disc diagram $D\rightarrow X_{(\Omega,\Theta)}$, there is no interior vertex of $D$ whose degree is greater than $2$ and less than $q$. 
    \end{itemize}  
\end{definition}

The following lemma summarizes some well known properties of graphical $C(3)$--$T(6)$ complexes (see for example \cite{Gui2025}, or \cite[Lemma 6.16]{CCGHO25} for the proofs in the $C(4)$--$T(4)$ case).

\begin{lemma}\label{lem:edge-pieces}
    Let $X_{(\Omega,\Theta)}$ be a simply connected graphical $C(3)$--$T(6)$ complex. Then:
    \begin{itemize}
        \item for every $i$, $\Omega_i\rightarrow \Theta$ is an embedding;
        \item the intersection of any two $\Omega_i$, $\Omega_j$ is empty, a vertex or an edge.
        \item every triangle in $X_{(\Omega,\Theta)}$ bounds a $2$-cell.
    \end{itemize}
\end{lemma}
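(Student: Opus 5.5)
The plan is to prove the three items in order, using the standard disc diagram machinery for simply connected small cancellation complexes. Throughout, ``cycle'' means an immersed cycle, and we use that $X=X_{(\Omega,\Theta)}$ is simply connected, so every closed path in $\Theta$ bounds a disc diagram. Passing to a minimal-area diagram, we may assume it is reduced, since non-reduced pairs of cells can be cancelled. For reduced diagrams under $C(3)$--$T(6)$ we will use the combinatorial Gauss--Bonnet (curvature) formula. Put curvature $\kappa(v)=2\pi-\pi\deg(v)+\sum(\text{angles})$ at vertices and $\kappa(R)$ at faces, with angles $2\pi/3$ at interior vertices of degree $\ge 3$ (interior degree-$2$ vertices are ignored after smoothing). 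Then $C(3)$ makes every interior face have at least $3$ sides, hence nonpositive curvature. $T(6)$ makes interior vertices have degree $\ge 6$ or $2$, hence nonpositive curvature. It follows that every reduced disc diagram that is not a single cell has at least two ``shells'' or spurs on its boundary, as in the Lyndon--Schupp/Wise classification of $C(3)$--$T(6)$ diagrams.

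For the first item, suppose two distinct vertices $a\ne b$ of $\Omega_i$ map to the same vertex of $\Theta$. Choose a path $\gamma$ in $\Omega_i$ from $a$ to $b$; its image is a closed path in $\Theta$. If the image is a backtrack-free closed path, it bounds a reduced disc diagram $D$. We choose $a,b,\gamma$ to minimise first the length of $\gamma$ and then the area of $D$. Two cases arise.
\begin{itemize}
\item If $D$ is a single cell $R$, then $\partial R$ factors through some $\Omega_j$ and coincides with $f\circ\gamma$. This forces a piece that is the whole $\gamma$, or a cycle of $\Omega_i$ that is a concatenation of fewer than $3$ pieces, contradicting $C(3)$.
\item Otherwise $D$ has a boundary shell, i.e.\ a cell whose outer boundary arc is long and whose inner boundary consists of at most $\le 1$ piece (for $C(3)$--$T(6)$, shells of ``type'' with inner path at most one piece, or a spur). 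We replace the portion of $\gamma$ along that outer arc by the inner piece. This gives either a shorter $\gamma$ or a smaller diagram, which is a contradiction. The same argument handles the case where $\gamma$ lies inside the cell's own relator subgraph.
\end{itemize}
I expect this step to be the main obstacle. The graphical setting has no a priori embedding, so the shell argument must carefully distinguish a cell of $D$ that maps into $\Omega_i$ itself from other cells. The cleanest route is probably to attach $\Omega_i$ along $\gamma$ as a ``generalised cell'' and rerun the curvature count.

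For the second item, let $\Omega_i\cap\Omega_j$ (both now embedded) contain two vertices $p,q$. If they are non-adjacent or are joined within the intersection by different paths, take geodesic paths $\alpha\subset\Omega_i$ and $\beta\subset\Omega_j$ from $p$ to $q$. Minimising as before, $\alpha\bar\beta$ is a cycle which bounds a reduced disc diagram. Here $\Omega_i$ and $\Omega_j$ each contribute one boundary arc, so the boundary is covered by $2$ ``generalised cells''. The curvature count then forces a contradiction, because the positive curvature must sit at $\le 2$ corners, while shells need at least $3$ boundary pieces under $C(3)$. Alternatively, if the intersection is connected but contains a path of length $\ge 2$, then that path is a piece, and concatenating it with a cycle reduces the piece count below $3$, again contradicting $C(3)$. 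Hence the intersection is empty, a vertex, or an edge.

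For the third item, take a triangle $xyz$ in $\Theta$, i.e.\ a closed path of length $3$. It bounds a reduced disc diagram $D$. Each boundary edge is a piece or lies in a single cell. If $D$ had more than one cell, the Gauss--Bonnet count would require at least two shells, each with an outer arc of length $\ge 2$ in a boundary of length $3$, which is impossible, or spurs, which are impossible since a triangle has no backtracking. So $D$ is a single $2$-cell whose boundary is $xyz$. The one degenerate case to exclude is a diagram whose boundary wraps through several cells meeting in single edges. In that case the $T(6)$ condition at the interior vertex forces degree $\ge 6$, which contradicts the boundary length $3$.
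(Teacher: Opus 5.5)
The paper gives no proof of this lemma. It only points to \cite{Gui2025} and, for the $C(4)$--$T(4)$ analogue, to \cite[Lemma 6.16]{CCGHO25}. So your sketch has to stand on its own, and it has two genuine gaps.

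\textbf{First gap: the shell lemma is false here.} You assert that a reduced $C(3)$--$T(6)$ diagram that is not a single cell has two spurs or two shells, a shell being a boundary cell whose inner path is at most one piece. That is the $C(6)$/$C(7)$ Greendlinger picture, and it fails in this setting. In the triangular tiling of the plane (a $C(3)$--$T(6)$ complex whose pieces are edges), take the six triangles around a vertex. This is a reduced diagram with no spurs, in which every cell meets the boundary in one edge and has two inner pieces. All its positive curvature sits at the six boundary vertices of degree $3$. All three items are derived from this claim, so none of them is established.

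In item 1 the surgery fails independently. Replacing the outer arc of a boundary cell by its inner path gives a path that no longer lifts to $\Omega_i$, so the minimality of $\gamma$ is not contradicted. The only legitimate move is to absorb a cell coming from $\Omega_i$ whose outer arc lifts compatibly with $\gamma$. After that every outer arc is a piece, and you still have to use $T(6)$ at the boundary. For example, attach a cell of $\Omega_i$ along the two boundary edges at a boundary vertex $v$; this turns $v$ into an interior vertex of a reduced diagram.

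\textbf{Second gap: $C(3)$ does not bound piece length.} Your fallback in item 2 says a path of length $\ge 2$ in $\Omega_i\cap\Omega_j$ contradicts $C(3)$. This is false. Subdivide every edge of the triangular tiling: the cells become hexagons, pieces have length $2$, and each cell is exactly three pieces. What excludes long pieces is $T(6)$. Suppose $\Omega_i,\Omega_j$ share a path $p_0p_1p_2$ lying on immersed cycles $C_i\to\Omega_i$ and $C_j\to\Omega_j$. Glue cells $C_i,C_j,C_i,C_j$ cyclically around a vertex $v\mapsto p_1$, consecutive cells sharing exactly one edge at $v$, mapped alternately to $p_1p_2$ and $p_1p_0$. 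This is a reduced disc diagram with an interior vertex of degree $4$.

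This ``pieces are single edges'' step is the missing key idea. It is also exactly what rules out the interior degree-$2$ vertices you discard by smoothing. Without it, item 3 cannot come from curvature alone. Take a cell bounded by $xy$ followed by a long path $P$ from $y$ to $x$, and a cell bounded by $yz$, $zx$ and $P^{-1}$. This configuration satisfies $C(3)$ and balances the curvature count, and only the degree-$4$ diagram above excludes it.

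\textbf{How the argument can be completed.} Once pieces are edges, the boundary-attachment trick forces every boundary vertex other than the one junction point of $\gamma$ (or the two junction points of $\alpha\bar\beta$) to have degree $\ge 6$. With angle $\pi/3$ at all corners, Gauss--Bonnet then yields items 1 and 2. For item 3, each of $x,y,z$ carries curvature at most $2\pi/3$, and equality forces $D$ to be a single cell. All of this tacitly uses that $f$ is an immersion and that every length-$2$ path of $\Omega_i$ lies on an immersed cycle of $\Omega_i$.
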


In general, a graphical complex $X_{(\Omega,\Theta)}$ is not locally finite as a CW-complex. Nevertheless, one can associate to $f:\Omega\rightarrow \Theta$ another complex by attaching, for each component $\Omega_i$, a simplicial cone $C(\Omega_i)$ to $\Theta$ along the restriction $f|_{\Omega_i}$. Denote the resulting complex by $X^*_{(\Omega,\Theta)}$. By Lemma \ref{lem:edge-pieces}, if $X_{(\Omega,\Theta)}$ is simply connected and satisfies the $C(3)$--$T(6)$ condition, then the associated complex $X^*_{(\Omega,\Theta)}$ is a simplicial complex. In this situation, we say that $X_{(\Omega,\Theta)}$ is \emph{uniformly locally finite} if $X^*_{(\Omega,\Theta)}$ is a uniformly locally finite simplicial complex.

\subsection{Systolizing $C(3)$--$T(6)$}

\sloppy Given a simply connected graphical $C(3)$--$T(6)$ complex $X_{(\Omega,\Theta)}$ we construct a systolic complex as follows.
For each $\Omega_i$ add to $\Theta$ every possible edge so that its vertices form a clique, and denote the new simplicial graph by $G_{X_{(\Omega,\Theta)}}$.
The flag completion of $G_{X_{(\Omega,\Theta)}}$ is called the \emph{thickening of $X_{(\Omega,\Theta)}$} and denoted by $\Th(X_{(\Omega,\Theta)})$.
Note that by Lemma \ref{lem:edge-pieces} the vertices of every triangle in $\Th(X_{(\Omega,\Theta)})$ belong to an $\Omega_i$.
Therefore, every maximal simplex of $\Th(X_{(\Omega,\Theta)})$ is precisely the thickening of an $\Omega_i$.
We call such a simplex a \emph{thick cell} in $\Th(X_{(\Omega,\Theta)})$.

\begin{proposition}\label{prop:c3t6_is_systolic}
    Let $X_{(\Omega,\Theta)}$ be a simply connected graphical $C(3)$--$T(6)$ complex, then $\Th(X_{(\Omega,\Theta)})$ is systolic.
\end{proposition}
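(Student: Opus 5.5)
We use the characterization recalled in Section~\ref{sec:premgr}: a flag simplicial complex is systolic if and only if it is simply connected and the links of its vertices contain no induced $4$-cycles and no induced $5$-cycles. Since $\Th(X_{(\Omega,\Theta)})$ is a flag completion, it is flag by construction. We therefore need to check two things: that $\Th(X_{(\Omega,\Theta)})$ is simply connected, and that its vertex links have no induced $4$- or $5$-cycles.

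\emph{Simple connectedness.} Every new edge of $G_{X_{(\Omega,\Theta)}}$ joins two vertices of a single $\Omega_i$. Each such edge is homotopic, inside the thick cell $\Th(\Omega_i)$, to a path in $\Omega_i\subseteq\Theta$. Hence every loop in $\Th(X_{(\Omega,\Theta)})$ is homotopic to a loop in $\Theta$. Such a loop is null-homotopic in $X_{(\Omega,\Theta)}$, and each $2$-cell of $X_{(\Omega,\Theta)}$ is attached along a cycle of some $\Omega_i$. That cycle bounds a disc in the simplex $\Th(\Omega_i)$, using Lemma~\ref{lem:edge-pieces} to know that $\Omega_i$ embeds. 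So a null-homotopy in $X_{(\Omega,\Theta)}$ can be transported cell by cell into $\Th(X_{(\Omega,\Theta)})$, which gives $\pi_1=1$. Equivalently, one can observe that $\Th$ and $X^*_{(\Omega,\Theta)}$ are both homotopy equivalent to $X_{(\Omega,\Theta)}$.

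\emph{Links.} Fix a vertex $v$. By the remark after the definition of the thickening, every simplex through $v$ lies in a thick cell $\Th(\Omega_i)$ with $v\in\Omega_i$. Thus $\Lk(v)$ is a union of cliques $K_i=\Omega_i\setminus\{v\}$, one for each $\Omega_i\ni v$. By Lemma~\ref{lem:edge-pieces}, two such cliques meet in at most one vertex, namely when $\Omega_i\cap\Omega_j$ is an edge $vw$. An induced cycle $c_1\cdots c_n$ in $\Lk(v)$ with $n\in\{4,5\}$ has each edge $c_jc_{j+1}$ in some clique $K_{i_j}$. Consecutive edges lie in distinct cliques, since otherwise we would get a chord. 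Moreover, $c_{j+1}$ is exactly the intersection $K_{i_j}\cap K_{i_{j+1}}$, so $vc_{j+1}$ is an edge common to $\Omega_{i_j}$ and $\Omega_{i_{j+1}}$.

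This configuration gives $n$ cells $\Omega_{i_1},\dots,\Omega_{i_n}$ cyclically arranged around $v$, with consecutive ones sharing the edges $vc_j$. We build a disc diagram $D\to X_{(\Omega,\Theta)}$ with $v$ an interior vertex of degree $n$. Each $\Omega_{i_j}$ contributes a $2$-cell whose boundary runs $c_j\, v\, c_{j+1}$ and then returns along a path in $\Omega_{i_j}$ from $c_{j+1}$ to $c_j$. Since $\Omega_{i_j}$ is embedded and $C(3)$ holds, such cycles exist: we need $vc_j$ and $vc_{j+1}$ to lie on an immersed cycle of $\Omega_{i_j}$, and this uses that $\Theta$ has no degree-$1$ vertices. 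The essential check is that $D$ is reduced. Consecutive cells come from distinct components $\Omega_{i_j}\neq\Omega_{i_{j+1}}$, so the shared edge $vc_{j+1}$ really is a piece and not a cancellable pair. Once $D$ is reduced, $T(6)$ forbids an interior vertex of degree $3<n<6$, and we get a contradiction.

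Getting $D$ to be a genuine disc diagram, with $v$ interior and the cells glued only along the edges $vc_j$, is the step I expect to be the main obstacle. The tool here is the non-adjacency of $c_j$ and $c_{j+2}$, which comes from the cycle being induced. Alternatively, one can first pass to a minimal reduced subdiagram; in that case the degree of $v$ might drop, and that case must be excluded separately. If the degree drops to $3$ or less, we would get an edge between non-consecutive $c_j$, or two of the cliques coinciding, contradicting inducedness. With both conditions verified, the characterization yields that $\Th(X_{(\Omega,\Theta)})$ is systolic.
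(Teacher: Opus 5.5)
Your argument is the paper's, made explicit. The paper also observes that $\Lk_{\Th}(v)$ is a union of cliques, one for each $\Omega_i\ni v$, and that two of them share a vertex exactly when the corresponding $\Omega_i$'s share an edge at $v$. Hence an induced $4$- or $5$-cycle gives a short cycle in $\Lk_X(v)$, which $T(6)$ forbids; your disc diagram is that one-line correspondence written out against the disc-diagram definition of $T(6)$.

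The step you single out as the main obstacle is not one. A disc diagram is an abstract disc with a combinatorial map to $X$. Gluing abstract polygons $R_1,\dots,R_n$ cyclically, $R_{j-1}$ to $R_j$ along $[v,c_j]$, therefore always gives a disc with $v$ interior of degree $n$, however the images overlap in $X$. The only pieces of this disc are the edges $[v,c_j]$. These map to pieces of $X$ because they lift to the distinct components $\Omega_{i_{j-1}}\neq\Omega_{i_j}$, so $D$ is reduced and no passage to subdiagrams is needed.

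Your simple-connectedness argument is fine. The aside that $\Th$ is homotopy equivalent to $X$ is false: $X$ typically has $2$-cells along an immersed cycle and along its double cover, so $H_2(X)\neq 0$. Only $\pi_1$ matters, though.

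The genuine gap is the existence of the cells $R_j$. Neither $C(3)$ nor the absence of degree-$1$ vertices in $\Theta$ gives an immersed cycle of $\Omega_{i_j}$ through $c_jvc_{j+1}$. In fact, under the hypotheses as literally stated the conclusion fails. Let $\Theta$ be the $4$-regular tree, $v$ a vertex with neighbours $c_1,\dots,c_4$, and $\Omega=\bigsqcup_j\Omega_j$ with $\Omega_j$ the path $c_jvc_{j+1}$ (indices mod $4$). There are no immersed cycles, so $X_{(\Omega,\Theta)}=\Theta$. It is simply connected and satisfies $C(3)$--$T(6)$ vacuously, as well as the conclusions of Lemma~\ref{lem:edge-pieces}. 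Yet $\Lk_{\Th}(v)$ is the induced $4$-cycle $c_1c_2c_3c_4$.

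You need to add, as a standing assumption, that every component $\Omega_i$ is finite and has no vertices of degree $1$. This is harmless, since pruning such vertices of $\Omega$ does not change $X_{(\Omega,\Theta)}$. With it, every non-backtracking path in $\Omega_i$, in particular $c_jvc_{j+1}$, extends to an immersed cycle, by a standard fact about finite leafless graphs. The rest of your argument then goes through. The paper's identification of cycles in $\Lk_{\Th}(v)$ with cycles in $\Lk_X(v)$ tacitly relies on the same fact.
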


\begin{proof}
    Clearly $\Th(X_{(\Omega,\Theta)})$ is simply connected if $X_{(\Omega,\Theta)}$ is.
    Thus we have to show that links of vertices do not contain induced $4$-cycles or $5$-cycles.
    The link of a vertex $v$ in the thickened complex $\Th(X_{(\Omega,\Theta)})$ can be described as follows: for each $\Omega_i$ it has a simplex of dimension two less than the number of vertices in $\Omega_i$; and two such simplices share a vertex if and only if the corresponding $\Omega_i$'s intersect at an edge.
    Therefore, induced cycles in $\Lk_{\Th(X_{(\Omega,\Theta)})}(v)$ correspond to induced cycles in $\Lk_{X_{(\Omega,\Theta)}}(v)$, and since $X_{(\Omega,\Theta)}$ is $T(6)$ they have length at least $6$.
\end{proof}

\begin{proposition}\label{prop:c3t6_path_level_sets} Let $X_{(\Omega,\Theta)}$ be a simply connected graphical $C(3)$--$T(6)$ complex. Then for any vertices $u,v\in\Th(X_{(\Omega,\Theta)})$ and $0\le k\le d(u,v)$, the level set $L_k(u,v)$ is a segment.
\end{proposition}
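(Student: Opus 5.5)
To show that $L_k(u,v)$ is a segment, i.e.\ a path, we aim to prove that it contains no vertex of degree at least $3$ and no cycle. By Proposition~\ref{levels-are-chordal}, $L_k(u,v)$ is a convex chordal subgraph of $\Th(X_{(\Omega,\Theta)})$ without induced $3$-fans. The proof therefore reduces to a local analysis using the structure of thick cells. The key structural fact is that every clique of $\Th(X_{(\Omega,\Theta)})$ with at least three vertices lies in a single thick cell, by Lemma~\ref{lem:edge-pieces}. Moreover, two distinct thick cells meet in at most an edge.

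The first step is to show that $L_k(u,v)$ contains no triangle, for $0<k<d(u,v)$. Suppose $a,b,c\in L_k(u,v)$ span a triangle. Then all three lie in one $\Omega_i$, whose thickening is a simplex $\Delta$. By Lemma~\ref{l:clique-in-the-sphere} applied to $u$, there is a vertex $y\in L_{k-1}(u,v)$ adjacent to $a,b,c$. Similarly there is $z\in L_{k+1}(u,v)$ adjacent to $a,b,c$. The clique $\{y,a,b,c\}$ lies in one thick cell, which must be $\Delta$, since two thick cells share at most an edge and hence at most two vertices. Likewise $z\in\Delta$, so $y\sim z$. This contradicts $d(y,z)=2$, which holds because $y\in I(u,a)$ and $z\in I(a,v)$ lie on different levels two apart. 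Hence $L_k(u,v)$ is triangle-free, and being chordal it is a forest. Since it is convex, it is also connected, so it is a tree. The extreme cases $k=0$ and $k=d(u,v)$ are single vertices.

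The second step, which I expect to be the main obstacle, is to exclude a vertex $x\in L_k(u,v)$ with three pairwise non-adjacent neighbours $p,q,r$ in the level. By Lemma~\ref{lem:tripod-asteroid}, levels $L_{k-1}(u,v)$ and $L_{k+1}(u,v)$ then contain a triangle (a $0$-asteroid) built from neighbours of $x$, unless $k-1=0$ or $k+1=d(u,v)$. The case $0<k\pm1<d(u,v)$ is excluded by the first step. In the remaining boundary cases we argue directly. Take $k=1$, so that $L_0(u,v)=\{u\}$ and $u$ is adjacent to $x,p,q,r$. Then the cliques $\{u,x,p\}$, $\{u,x,q\}$ and $\{u,x,r\}$ each lie in a thick cell. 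These cells are pairwise distinct, since $p,q,r$ are pairwise non-adjacent, yet each contains the edge $ux$. In the link of the edge $ux$ this would give three simplices sharing an edge. I would then argue, via the description of links in the proof of Proposition~\ref{prop:c3t6_is_systolic}, that an edge of $\Theta$ lies in exactly one or two $\Omega_i$. The justification is that three components containing the same edge would make that edge a piece in three ways, and this would force a short cycle in the link of a vertex, violating $T(6)$. With that, the three thick cells cannot all contain $ux$. The case $k=d(u,v)-1$ is symmetric.

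A tree with all degrees at most $2$ is a path, so $L_k(u,v)$ is a segment. The delicate point to verify carefully is the edge-multiplicity claim in the second step, together with the treatment of edges $ux$ that are not edges of $\Theta$ but were added in the thickening. Such an edge lies in a unique thick cell, which handles that case immediately.
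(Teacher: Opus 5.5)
Your plan is the paper's proof: interior levels are triangle-free because distinct thick cells share at most an edge, chordality plus convexity makes each level a tree, and Lemma~\ref{lem:tripod-asteroid} excludes branching. Your Step~1 is correct, and it spells out the paper's one-line triangle argument more explicitly via Lemma~\ref{l:clique-in-the-sphere} applied on both sides. The genuine gap is your treatment of the boundary levels, which rests on a false claim. In a simply connected graphical $C(3)$--$T(6)$ complex an edge of $\Theta$ can lie in arbitrarily many $\Omega_i$. For example, let $\Theta$ be three triangles $vwp$, $vwq$, $vwr$ sharing the edge $vw$, and let $\Omega$ be the disjoint union of these three triangles. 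Then:
\begin{itemize}
\item the only piece is the edge $vw$, so no relator is a concatenation of pieces and $C(3)$ holds;
\item the complex is simply connected;
\item in a reduced diagram, consecutive cells around an interior vertex would have to meet along edges mapping to the piece $vw$, which forces backtracking; so no reduced diagram has interior vertices and $T(6)$ holds vacuously.
\end{itemize}
Here three thick cells contain $vw$. This is not a fluke: $T(6)$ is a girth-type condition on links, and three simplices of $\Lk_{\Th}(v)$ sharing a vertex give a tripod in the link, not a short cycle. So the local configuration you want to rule out (three thick cells through the edge $ux$) really occurs, and no multiplicity argument can produce the contradiction.

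What excludes branching in the boundary levels is the interval itself. By (INC), i.e.\ convexity of $B_{d(u,v)-1}(v)$, the level $L_1(u,v)$ is a clique, and symmetrically so is $L_{d(u,v)-1}(u,v)$. So these levels contain no vertex with three pairwise non-adjacent neighbours. Your Step~1, as you set it up, also covers $k=1$ and $k=d(u,v)-1$ (with $y=u$ or $z=v$), so these levels have at most two vertices. Note also that Step~2 only needs one of $L_{k-1}(u,v)$, $L_{k+1}(u,v)$ to be an interior level, so the boundary issue really arises only when $k=1$ and $d(u,v)=2$. With the edge-multiplicity argument replaced by (INC), your proof is complete and essentially coincides with the paper's.
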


\begin{proof}
    Let $u,v$ be vertices in $\Th(X_{(\Omega,\Theta)})$ at distance $k$.
    We want to show that for every $1<i<k$, the level set $L_i(u,v)$ is a segment.
    By Lemma \ref{lem:edge-pieces} two different thick cells can intersect at most at a single edge.
    Hence level sets do not contain triangles.
    Furthermore, by Proposition \ref{levels-are-chordal} they are convex chordal subgraphs, so they must be trees.
    Suppose there was a tripod in $L_{i+1}(u,v)$, Lemma \ref{lem:tripod-asteroid} we would obtain a triangle in $L_i(u,v)$, a contradiction.
    Consequently, level sets are trees without tripods, i.e. segments.
\end{proof}

\section{Classical $C(6)$ groups are fit}\label{sec:c6}

In this section we prove that simply connected classical $C(6)$ small cancellations complexes are quasi-isometric to fit systolic complexes.
Though unorthodox, classical $C(6)$ small cancellation complexes can be defined as graphical $C(6)$ small cancellation complexes where all the connected components of the corresponding graph $\Omega$ are simplicial circles, and instead of attaching a $2$-cell for each immersed cycle in $\Omega$, we just attach a single $2$-cell along each of the circles in $\Omega$---see Section~\ref{sec:smallcanc} above. The standard reference for the classical small cancellation theory is the book \cite{LynSch2001}.

Let $\mathcal{U}$ be a cover of a set  $X$.
The \emph{nerve} of $\mathcal U$ is the simplicial complex $N({\mathcal{U}})$ whose vertex set is $\mathcal{U}$, and vertices $U_1,\ldots,U_k$ span a $k$-simplex if and only if $\cap_{i=1}^k U_i \neq \varnothing$.
Given a simply connected classical $C(6)$ small cancellation complex $X$, we define its \emph{Wise complex} $W(X)$ as follows. First, we define the complex $X'$ obtained by gluing one hexagonal cell $C$ to every edge $e$ not contained in a $2$-cell of $X$. For any such additional cell $C$ none of its vertices except endpoints of $e$ are contained in $X$ neither in other additional $2$-cells of $X'$. Observe that $X'$ is the union of its $2$-cells and that $X'$ is still a $C(6)$ complex. Now, the Wise complex $W(X)$ of $X$ is the nerve of the cover of $X'$ consisting of the closed $2$-cells of $X'$.
This complex was introduced by Wise in \cite{Wise2003}, where he showed that it is systolic. 
This construction was later generalized to the graphical setting in \cite{OsPry2018}.

We recall the following well-known facts about simply connected $C(6)$ small cancellation complexes. See e.g.\ \cite[Lemma 6.10]{OsPry2018} for a more general statement in the graphical setting.

\begin{proposition}\label{prop:classics}
    Let $X$ be a simply connected classical $C(6)$ small cancellation complex. Then:
    \begin{itemize}
        \item for every $i$, $\Omega_i\rightarrow \Theta$ is an embedding;
        \item the intersection of any two $\Omega_i$, $\Omega_j$ is empty or contractible.
    \end{itemize}
\end{proposition}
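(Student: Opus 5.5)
This is a classical fact from small cancellation theory, and I would derive it by van Kampen diagram arguments using the $C(6)$ condition and simple connectivity of $X$. The plan follows the standard route (as in Lynden-Byers--Schupp, or the graphical version \cite[Lemma 6.10]{OsPry2018}). The key tool is Greendlinger's lemma for reduced $C(6)$ disc diagrams. In a reduced $C(6)$ diagram with at least two 2-cells, there are at least two boundary cells whose outer boundary path has interior degree at most 3. That is, each such cell meets the rest of the diagram in at most three pieces. A single 2-cell diagram is the degenerate case.

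For the first item, that each circle $\Omega_i$ embeds in $\Theta$, suppose $f|_{\Omega_i}$ identifies two vertices. Then some proper subpath $\gamma$ of the boundary of the 2-cell $R_i$ maps to a closed path in $\Theta$. I would choose such a $\gamma$ of minimal length; after a cyclic shift and reduction it is an immersed closed path. Since $X$ is simply connected, it bounds a reduced disc diagram $D$. Gluing $R_i$ along $\gamma$ and analysing the result gives one of two outcomes. Either $D$ has cells, and Greendlinger yields a shell of $D$ whose outer arc lies along $\gamma$, so that arc is a subpath of $R_i$. In that case the boundary of $R_i$ is covered by at most five pieces (three pieces plus pieces of $R_i$ meeting itself), contradicting $C(6)$. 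Or $D$ is a tree, contradicting that $\gamma$ is immersed. The minimality of $\gamma$ is what lets me ignore self-overlaps.

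For the second item, suppose $\Omega_i\cap\Omega_j$ is nonempty and not contractible. Since both images are embedded circles, the intersection is a disjoint union of arcs and points, and non-contractibility means it is disconnected. (The case where the intersection is all of a circle is impossible by $C(6)$, since then the whole boundary would be one piece.) I would pick two components $A$ and $B$ and join them by subarcs $\alpha\subset R_i$ and $\beta\subset R_j$ that avoid the other cell's image. Then $\alpha\beta^{-1}$ is a closed path, and I take a reduced disc diagram for it. Adjoining $R_i$ and $R_j$ gives a diagram whose boundary consists of pieces of $R_i$ and $R_j$. If the diagram is nonempty, Greendlinger produces a shell with too few pieces, contradicting $C(6)$. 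If it is empty, $\alpha$ and $\beta$ coincide, which contradicts the choice of $A$ and $B$ as distinct components.

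I expect the main obstacle to be bookkeeping the degenerate cases, namely the diagram not being a disc (having cut vertices or spurs) and the reduction step possibly removing $R_i$ or $R_j$. I would first handle these by choosing the counterexample minimizing the number of cells in the diagram and then applying Greendlinger to each disc component. If that becomes messy, I would simply cite the general graphical statement \cite[Lemma 6.10]{OsPry2018}, specialized to circle components.
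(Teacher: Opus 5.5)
The paper does not prove this proposition. It records both items as well-known facts and refers to \cite[Lemma 6.10]{OsPry2018} for the graphical version, so your fallback citation is exactly the paper's route. Your Greendlinger argument is the standard proof behind that lemma, and its outline (minimal diagrams, shells with at most three inner pieces, counting against $C(6)$) is right.

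However, one step fails as you state it: choosing $\gamma$ of minimal \emph{length} does not let you ignore self-overlaps. A shell $S$ of $D$ may be a copy of $R_i$ whose outer arc $\delta\subseteq\gamma$ lifts to $\Omega_i$ at the same position as $\gamma$ does. Then $\delta$ is not a piece, $\partial S$ is only known to be $\delta$ followed by at most three pieces, and there is no contradiction with $C(6)$. Length minimality does not help: the inner path of such an $S$ runs over the complementary closed arc $\gamma'$ of $\partial R_i$, and minimality only tells you $|\gamma'|\ge|\gamma|$.

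What works is to minimize the number of cells instead, over all closed proper subpaths of all cell boundaries. The gluing you mention is where this pays off. $D\cup_{\gamma}R_i$ is a disc diagram for $\gamma'$, and in it $S$ and $R_i$ form a cancelling pair. So $\gamma'$, also a closed proper subpath of $\partial R_i$, bounds a diagram with fewer cells than $D$, a contradiction. With this minimization you must then reduce to a nonsingular disc diagram, which is the bookkeeping you anticipated.

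Also, your count ``at most five pieces'' is a count for $\partial S$, not for $\partial R_i$: the outer arc of $S$ may be split at the basepoint of $\gamma$ into two pieces, plus at most three inner pieces.

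The same issue arises in the second item, where a shell may be a copy of $R_i$ parallel to $\alpha$ (or of $R_j$ parallel to $\beta$). Excising it against $R_i$ replaces $\alpha$ by the other arc of $\partial R_i$ between the endpoints, and that arc may meet $R_j$. So the minimization must range over all pairs of arcs $\alpha\subseteq\partial R_i$, $\beta\subseteq\partial R_j$ joining points of different components of the intersection, not only over arcs avoiding the other cell.

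Relatedly, you cannot in general make both arcs avoid the other cell at once, since the components may occur in incompatible cyclic orders on the two circles. It suffices that $\alpha$ does, which already makes $\alpha\beta^{-1}$ an embedded cycle.

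Finally, a shell whose outer arc contains both corners can carry three outer pieces, six in total. You need the second shell from Greendlinger's lemma: at most one shell can contain both corners, so the other carries at most $2+3<6$ pieces.
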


The following lemma concerns an important property of links in the Wise complex $W(X)$ of a $C(6)$ complex $X$, and implies immediately fitness of $W(X)$.

\begin{lemma}
\label{lem:linkC(6)}
Let $W(X)$ be the Wise complex of  a classical simply connected $C(6)$ small cancellation complex $X$. Let $x$ be a vertex of $W(X)$ and let $\sigma=(v_0,v_1,\ldots,v_k)$ be a simplex in the link $\Lk(x)$ of $x$ in $W(X)$. Then there exist $i,j\in \{ 0,1,\ldots, k\}$ such that if $v\in \Lk(x)$ is adjacent to $v_l$ for some $l$, then $v\sim v_i$ or $v\sim v_j$. 
\end{lemma}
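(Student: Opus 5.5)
We work directly with the nerve description. The vertex $x$ of $W(X)$ is a closed $2$-cell $R_x$ of $X'$ (a cycle of length at least $6$). The vertices of $\Lk(x)$ are the $2$-cells $R\neq R_x$ meeting $R_x$. Two of them are adjacent iff $R\cap R'\neq\varnothing$. A simplex $\sigma=(v_0,\ldots,v_k)$ of $\Lk(x)$ is a family of cells $R_0,\ldots,R_k$ with $R_x\cup\{R_0,\ldots,R_k\}$ pairwise intersecting. In a nerve, pairwise intersection is not the same as a common point, but $W(X)$ is flag, being systolic by Wise. By Proposition~\ref{prop:classics} each intersection $R_x\cap R_l$ is empty or contractible, hence a nonempty arc $\alpha_l\subseteq \partial R_x$, possibly a single vertex. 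By $C(6)$, if $\alpha_l$ contains an edge it is a piece, and $\partial R_x$ is not a union of fewer than $6$ pieces.

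First I establish a Helly/cyclic-order picture. Pairwise intersection of $R_x,R_l,R_m$ with a nonempty triple intersection missing would produce a triangle of cells in a simply connected $C(6)$ complex, bounding a reduced disc diagram with at most three boundary cells. Greendlinger/Strebel-type classification of $C(6)$ diagrams rules this out. So I expect $R_x\cap R_l\cap R_m\neq\varnothing$, that is, the arcs $\alpha_l,\alpha_m$ meet. Then the arcs $\alpha_0,\ldots,\alpha_k$ are pairwise intersecting arcs on the circle $\partial R_x$. They cannot jointly cover too much: if they covered the circle, then $\partial R_x$ would be a union of at most a few pieces, contradicting $C(6)$. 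Some care is needed, since pairwise intersecting arcs of a circle covering it can number three. Hence their union $A=\bigcup_l\alpha_l$ is a proper arc of $\partial R_x$. Choose $i,j$ so that $\alpha_i$ contains the left end of $A$ and $\alpha_j$ the right end. Pairwise intersecting intervals on a line have a common point, so $A=\alpha_i\cup\alpha_j$.

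Next, let $v\in\Lk(x)$, with cell $R$, be adjacent to some $v_l$. Then $R\cap R_l\neq\varnothing$ and $R\cap R_x\neq\varnothing$. By the same triangle argument applied to $R_x,R,R_l$, the arc $\beta=R\cap R_x$ meets $\alpha_l\subseteq \alpha_i\cup\alpha_j$. So $\beta$ meets $\alpha_i$ or $\alpha_j$, which gives $R\cap R_i\neq\varnothing$ or $R\cap R_j\neq\varnothing$, that is, $v\sim v_i$ or $v\sim v_j$. The edge case $v=v_i$ or $v=v_j$ is trivial.

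The main obstacle is the step showing that pairwise intersecting triples of cells have a common point, and that the arcs do not wrap around $\partial R_x$. The additional hexagons of $X'$ on free edges must also be handled: they have trivial arcs, or arcs equal to a single edge, and still behave like pieces. For the triple intersection I would try a minimal-area reduced disc diagram argument. Take the boundary cycle formed by three subpaths in $R_x,R_l,R_m$ joining consecutive intersection points. The $C(6)$ curvature count, with interior cells having at least $6$ sides, forces boundary cells of small outer degree, and three boundary cells cannot supply them. For the wrap-around I would use $C(6)$ directly: a cover of $\partial R_x$ by the arcs would write $\partial R_x$ as a concatenation of at most three or four pieces.
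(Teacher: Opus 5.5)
Your argument is essentially the paper's: the cells of $\sigma$ meet $R_x$ in arcs of $\partial R_x$, the $C(6)$ condition prevents these arcs from covering $\partial R_x$, and the leftmost and rightmost arcs $\alpha_i,\alpha_j$ contain all the others, so a cell meeting some $\alpha_l$ inside $R_x$ meets $\alpha_i$ or $\alpha_j$. The step you call the main obstacle needs no disc-diagram or Greendlinger argument, because it is built into the definition of $W(X)$ as a nerve: $\sigma\cup\{x\}$ being a simplex means $R_x\cap R_0\cap\cdots\cap R_k\neq\varnothing$, and an edge $vv_l$ of $\Lk(x)$ means $\{v,v_l,x\}$ is a simplex, i.e.\ $R\cap R_l\cap R_x\neq\varnothing$. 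With a common point of all the $\alpha_l$ in hand, the wrap-around case reduces to two arcs covering $\partial R_x$, i.e.\ $\partial R_x$ being a concatenation of at most two pieces, which $C(6)$ forbids directly.
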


\begin{proof}
    By Proposition \ref{prop:classics}, the $2$-cells $C_0,\ldots,C_k$ of $X'$ (the augmented complex used to define $W(X)$) corresponding to the vertices of $\sigma$ intersect the $2$-cell $C_x$ corresponding to $x$ in segments of $\partial C_x$, and these segments have nonempty intersection.
    Since $X$ is $C(6)$, these segments cannot cover all of $\partial C_x$.
    Hence, we can order them from left to right, and it is clear that if $C_i\cap C_x$ is one of the left-most and $C_j\cap C_x$ is one of the right-most, then $v_i$ and $v_j$ work.
\end{proof}

\begin{proposition}
    \label{prop:fit_Wise}
    Let $X$ be a simply connected classical $C(6)$ complex. Then its Wise complex $W(X)$ is a fit systolic complex.
\end{proposition}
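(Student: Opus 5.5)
The plan is to deduce fitness from Corollary~\ref{c:fit_condition}, using Lemma~\ref{lem:linkC(6)} as the local input. Systolicity of $W(X)$ is due to Wise \cite{Wise2003}, so only fitness has to be proved. By Corollary~\ref{c:fit_condition}, it suffices to show that no link $\Lk(x)$ of a vertex $x\in W(X)$ contains an induced doubled asteroid. Recall that a doubled asteroid consists of two triangles $abc$ and $a'b'c'$ together with three tips $p,q,r$. Here $p\sim a,a'$, $q\sim b,b'$, $r\sim c,c'$, and there are no further edges.

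Suppose such an induced subgraph sits in $\Lk(x)$. Apply Lemma~\ref{lem:linkC(6)} to the simplex $\sigma=abc$. It gives two vertices $v_i,v_j\in\{a,b,c\}$ such that every vertex of $\Lk(x)$ adjacent to some vertex of $\sigma$ is adjacent to $v_i$ or to $v_j$. The tips $p,q,r$ are adjacent to $\sigma$, each through exactly one vertex, namely $a$, $b$, $c$ respectively. Since the doubled asteroid is induced, $p$ is adjacent only to $a$ within $\sigma$, and likewise for $q$ and $r$. Among $a,b,c$, at least one vertex, say $c$, is not in $\{v_i,v_j\}$. Then $r$ is adjacent to neither $v_i$ nor $v_j$, which contradicts the lemma. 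Hence links contain no induced doubled asteroids.

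Corollary~\ref{c:fit_condition} then says that level sets contain no induced tripods or asteroids. They are therefore $(1,4)$-quasi-isometric to segments, and $W(X)$ is fit. In fact the argument already excludes induced asteroids in links, which is stronger than needed.

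The main point requiring care is the use of Lemma~\ref{lem:linkC(6)}. Its hypothesis needs $\sigma$ to be a simplex of $\Lk(x)$, and here it is: $abc$ is a triangle in the link of a flag complex, hence a simplex. Its conclusion must apply to arbitrary vertices of $\Lk(x)$ adjacent to $\sigma$, and the tips are exactly such vertices. One should also confirm that the doubled asteroid of Figure~\ref{f:triplane} is indeed two triangles joined by three paths of length two, as in the proof of Corollary~\ref{c:fit_condition}. Even if that figure were read differently, only one asteroid is used above, so the same argument applies.
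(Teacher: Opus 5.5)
Your proof is correct and follows the same route as the paper: reduce via Corollary~\ref{c:fit_condition} to excluding induced doubled asteroids in links, then derive a contradiction from Lemma~\ref{lem:linkC(6)}. You make explicit the step the paper leaves to the reader (apply the lemma to one triangle, whose three tips are private neighbours of distinct triangle vertices, so two vertices cannot dominate all three), and this already excludes induced asteroids in links, which is more than is needed.
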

\begin{proof}
    If $W(X)$ was not fit then, by Corollary~\ref{c:fit_condition} the link of a vertex of $W(X)$ would contain an induced double asteroid, see Figure~\ref{f:triplane} (on the right). This would contradict Lemma~\ref{lem:linkC(6)}.
\end{proof}

Proposition \ref{prop:fit_Wise} does not hold for graphical $C(6)$ small cancellation complexes: 

\begin{example}
Figure~\ref{f:not-fit-C6} shows a part of a graphical $C(6)$ small cancellation complex $X_{(\Omega,\Theta)}$, whose Wise complex $W(X_{(\Omega,\Theta)})$ is not fit. The complex $X_{(\Omega,\Theta)}$ is built out of three ``half-planes''---blue, green, and red---glued together using graphical cells consisting of three paths of length $3$ each sharing endpoints.  

\begin{figure}[h!]
    \includegraphics[scale=0.5]{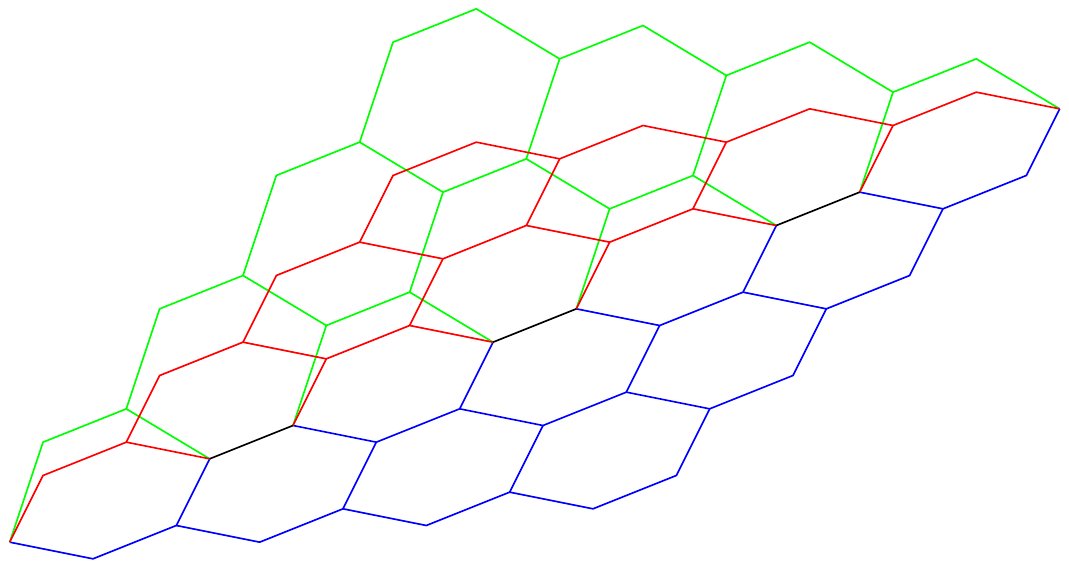}
	\caption{A part of a graphical $C(6)$ complex whose Wise complex is not fit. In fact, the Wise complex of the part shown is a $3$-arrowhead.}
	\label{f:not-fit-C6}
\end{figure}
\end{example}

\section{A three-dimensional example}\label{sec:54547272}

Here we present a fit systolic group acting geometrically
on a $3$-dimensional systolic pseudomanifold.
This group was introduced by J.\ Wieszaczewski in \cite{JW} and follows a construction of a similar group considered first by J.\ \'Swi{\c a}tkowski \cite{Sw}.
Unlike large-type Artin groups and small cancellation groups, this group cannot act geometrically on a $2$-dimensional contractible complex.
In fact, there exists a more general construction of groups of similar type---see Remark~\ref{r:3psmfld} at the end of this section.

Let $T_{54}$ and $T_{72}$ be $6$-large triangulations of the flat $2$-torus consisting of $54$ and $72$ equilateral triangles respectively; see Figure~\ref{f:T54_T72}.
Let $G_{54}$ and $G_{72}$ be the group of automorphisms of, respectively, $T_{54}$ and $T_{72}$ generated by reflections along the edges of triangles.
For the $G_{54}$-action (respectively, $G_{72}$-action) on $T_{54}$ (respectively, $T_{72}$) the stabilisers of triangles are trivial, the stabilisers of edges are isomorphic to $\mathbb Z_2$, and the stabilisers of vertices are isomorphic to the dihedral group $D_3$.
Both actions have as a quotient and fundamental domain a single triangle.

	\begin{figure}[h!]
		\includegraphics[scale=0.25]{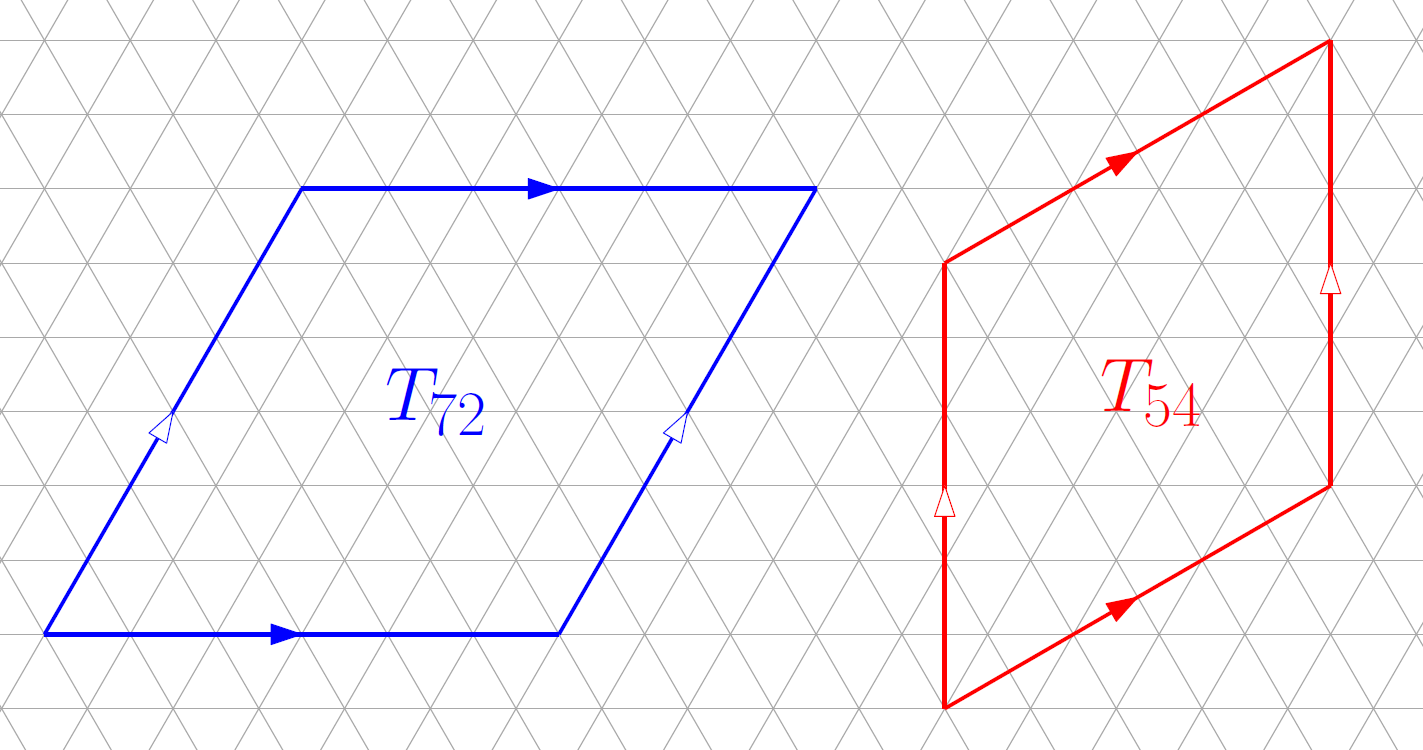}
		\caption{Locally $6$-large tori: $T_{72}$ and $T_{54}$. }
		\label{f:T54_T72}
	\end{figure}	

With this information, one can define a $3$-simplex of groups $\mathcal{G}(54,54,72,72)$ as follows.
The group associated to the $3$-simplex is trivial. The groups associated to the triangles are $\mathbb Z_2$, while the groups associated to the edges are $D_3$. Among the vertex groups, two are $G_{54}$ and the remaining two are $G_{72}$. The inclusion maps correspond to the inclusions
of the respective stabilisers in the $G_{54}$-action on $T_{54}$ (respectively, the $G_{72}$-action on $T_{72}$). See Figure~\ref{f:G72}, and see \cite{JaSw,Sw,JW} for more details.

	\begin{figure}[h!]
		\includegraphics[scale=0.7]{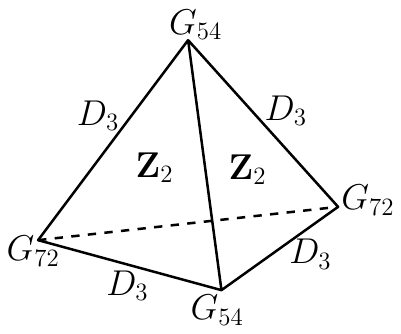}
		\caption{The simplex of groups $\mathcal{G}$.}
		\label{f:G72}
	\end{figure}

Since $\mathcal{G}(54,54,72,72)$ is a locally $6$-large simplex of groups (see \cite{JW}), it is developable by \cite[Theorem 6.1]{JaSw}.
Its fundamental group $G(54,54,72,72)$ acts geometrically (with the corresponding stabilisers of faces) on a development $K(54,54,72,72)$ with each $3$-simplex being a strict fundamental domain (i.e., each $3$-simplex meets each orbit at exactly one point). The complex $K(54,54,72,72)$ is an infinite $3$-dimensional systolic pseudomanifold. For a vertex in $K(54,54,72,72)$, its link is isomorphic to either $T_{54}$ or $T_{72}$, and we say that this vertex is of \emph{type} $54$ or $72$, respectively. Observe that every $3$-simplex has exactly two vertices of type $54$ and two vertices of type $72$. 

Let $E^2_\Delta$ be the equilaterally triangulated Euclidean plane, a \emph{flat} in $K(54,54,72,72)$ is a simplicial map $F\colon E^2_\Delta \rightarrow K(54,54,72,72)$ that restricts to an isometric embedding on the 1-skeleton of $E^2_\Delta$.
For each vertex $v$ in $F$, the intersection $\Lk(v)\cap F$ is a $6$-cycle, we say $v$ is \emph{(non)trivial} in $F$ if $F$ intersects $\Lk(v)$ (non)trivially, i.e., the intersection $\Lk(v)\cap F$ is homotopically (non)trivial in $\Lk(v)$. Proofs of the following elementary Lemmas~\ref{l:uniqueness-Lk(72,nontrivial)}, \ref{l:Lk(trivial)}, and \ref{l:Lk(72,nontrivial)} can be found in \cite{JW}.

\begin{lemma}[\cite{JW}, Lemma 3.8]
\label{l:uniqueness-Lk(72,nontrivial)}
    Up to a simplicial automorphism, there is a unique nontrivial 6-cycle in $T_{72}$. 
\end{lemma}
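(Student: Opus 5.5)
The plan is to realise $T_{72}$ concretely as a quotient of the equilaterally triangulated plane $E^2_\Delta$ by a lattice of translations. Then nontrivial $6$-cycles become translation vectors of length $6$, and straight lines of $E^2_\Delta$, which are easy to enumerate.

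\emph{Step 1: identify the lattice.} Since $T_{72}$ is a flat torus triangulated by $72$ equilateral triangles, it has $36$ vertices, each of degree $6$. Hence $T_{72}=E^2_\Delta/\Lambda$ for a sublattice $\Lambda$ of index $36$ in the vertex lattice $\Lambda_0=\mathbb Z e_1\oplus\mathbb Z e_2$ of $E^2_\Delta$. Reading off Figure~\ref{f:T54_T72}, I expect $\Lambda=6\Lambda_0$, which has index $36$; this is the identification I would confirm first. Let $|\cdot|$ denote the graph (hexagonal) length on $\Lambda_0$. $6$-largeness means every nontrivial closed edge path has length at least $6$, that is, $\min_{0\neq\lambda\in\Lambda}|\lambda|\ge 6$. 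For $\Lambda=6\Lambda_0$ the vectors with $|\lambda|=6$ are exactly $6u$, with $u$ one of the six unit vectors $\pm e_1,\pm e_2,\pm(e_1-e_2)$ (taking $e_1,e_2$ at angle $60^\circ$). Every other nonzero $\lambda=6w$ has $|\lambda|=6|w|\ge 12$.

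\emph{Step 2: lift cycles and pin down their shape.} Let $c$ be a nontrivial $6$-cycle in $T_{72}$. Lifting $c$ to $E^2_\Delta$ gives an edge path of length $6$ from a vertex $p$ to $p+\lambda$ with $0\ne\lambda\in\Lambda$. Then $|\lambda|\le 6$, so by Step~1 we have $|\lambda|=6$, $\lambda=6u$, and the lifted path is a geodesic in $E^2_\Delta$. In the triangular grid, a geodesic between two vertices joined by a multiple of a single unit vector is unique: it is the straight segment. By contrast, when the displacement is not parallel to a lattice direction, geodesics range over a parallelogram. Hence $c$ is the image of a straight line of $E^2_\Delta$ in one of the three directions, closing up after exactly $6$ steps. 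Such an image is an embedded $6$-cycle, and it is nontrivial because $\lambda\ne0$.

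\emph{Step 3: transitivity.} It remains to show that $\mathrm{Aut}(T_{72})$ acts transitively on the straight closed geodesics found in Step~2. The translations $\Lambda_0/\Lambda$ act transitively on the parallel lines in a fixed direction. The rotation by $60^\circ$ about a vertex preserves $6\Lambda_0$, so it descends to $T_{72}$ and permutes the three directions. Alternatively, one can use products of the edge reflections generating $G_{72}$, which also preserve $\Lambda$. This gives uniqueness up to simplicial automorphism.

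\emph{Expected main obstacle.} The delicate point is Step~1. I must correctly identify $\Lambda$ from the picture and check that no lattice vector of length exactly $6$ other than the $6u$ exists, since a non-straight length-$6$ vector would produce many inequivalent zigzag geodesics. If $\Lambda$ turns out not to be $6\Lambda_0$, I would instead list all $\lambda\in\Lambda$ with $|\lambda|=6$ explicitly via the hexagonal norm. I would then check, for each such $\lambda$, which geodesics descend to embedded cycles, and verify orbit-equivalence case by case.
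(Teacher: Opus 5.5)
The paper does not prove this lemma. It quotes it from \cite{JW} as elementary, so there is no in-paper argument to compare against. Your proof is correct and self-contained. Steps 2 and 3 are sound: projecting onto the direction of $u$ forces every step of a length-$6$ path from $p$ to $p+6u$ to equal $u$. The translations together with the $60^\circ$ rotation, or with the edge reflections, act transitively on the $18$ straight closed geodesics.

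\textbf{The identification $\Lambda=6\Lambda_0$.} This is the one step you leave provisional, and it can be derived rather than read off Figure~\ref{f:T54_T72}. The edge reflections generating $G_{72}$ are automorphisms of $T_{72}$, so their lifts normalise $\Lambda$. Hence $\Lambda$ is invariant under the linear parts of the reflections, and in particular under rotation by $120^\circ$. Identify $\Lambda_0$ with the Eisenstein integers $\mathbb{Z}[\omega]$. Then $\Lambda$ is an ideal of norm $36$. Since $2$ is inert and $3=-\omega^2(1-\omega)^2$ ramifies, $(6)$ is the only ideal of norm $36$. So your fallback case analysis is never needed.

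\textbf{What your Step 2 also gives.}
\begin{itemize}
\item It gives the consequence the paper actually uses. Along a straight closed geodesic the proper $3$-colouring of $T_{72}$ repeats with period $3$. So two of its vertices have the same colour, equivalently lie in the same orbit, exactly when they are antipodal.
\item It shows why the lemma is specific to $T_{72}$. The same ideal argument gives $T_{54}=E^2_\Delta/3(1-\omega)\mathbb{Z}[\omega]$. Its shortest nonzero vectors, such as $3(e_1+e_2)$, are not parallel to an edge, and each is realised by $\binom{6}{3}=20$ geodesics in the plane. These include both the bent path $e_1e_1e_1e_2e_2e_2$ and the zigzag $(e_1e_2)^3$, which no simplicial automorphism can identify. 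So uniqueness fails for $T_{54}$.
\end{itemize}
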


In particular, for a nontrivial vertex $v$ of type $72$ in a flat $F$, two vertices in $\Lk(v)\cap F$ (a $6$-cycle) belong to the same orbit (of the $G(54,54,72,72)$-action) if and only if they are antipodal. 

\begin{lemma}[\cite{JW}, Lemma 3.14]
\label{l:Lk(trivial)}
    Let $v$ be a trivial vertex  in a flat $F$, then all vertices in $\Lk(v)\cap F$ must be nontrivial in $F$.   
\end{lemma}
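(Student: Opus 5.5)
The statement to prove is Lemma~\ref{l:Lk(trivial)}: if $v$ is a trivial vertex of a flat $F$, then every vertex of the $6$-cycle $\Lk(v)\cap F$ is nontrivial in $F$. I would argue by contradiction, working entirely inside the two link triangulations $T_{54}$ and $T_{72}$ and using that $K(54,54,72,72)$ is a $3$-pseudomanifold.

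First I would fix what triviality means locally. Since $v$ is trivial, the $6$-cycle $c=\Lk(v)\cap F$ is null-homotopic in the torus $\Lk(v)$. Because $T_{54}$ and $T_{72}$ are $6$-large, a null-homotopic induced $6$-cycle must bound a disc. I expect that disc to be exactly the hexagon around a single vertex $w$ of $\Lk(v)$, that is, $c=\Lk_{\Lk(v)}(w)$. To prove this I would use a combinatorial Gauss--Bonnet count on a minimal filling diagram in the flat torus. Isometric embedding of $F$ forbids the diagonals of $c$, so the cycle is induced. In a $6$-large flat triangulation, the only induced $6$-cycles bounding discs are vertex links, since any larger disc would need boundary length at least $7$ or would force chords. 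So $c$ is the link of a vertex $w\in\Lk(v)$, and $vw$ is an edge of $K$.

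Next I would take a neighbour $u$ of $v$ in $F$, so $u\in c$, and suppose for contradiction that $u$ is also trivial. The edge $vu$ corresponds to a vertex $u$ of $\Lk(v)$ lying on the hexagon around $w$, so $uw$ is an edge of $\Lk(v)$. The triangle $vuw$ then has as its link a $2$-cycle in the $1$-dimensional link (pseudomanifold property). Applying the first step at $u$, the cycle $\Lk(u)\cap F$ is the link of a vertex $w'$ in $\Lk(u)$. Near the edge $vu$, both local fillings are spanned by cones over the two common $F$-neighbours of $v$ and $u$. This forces $w'=w$: the two triangles of $F$ containing $vu$ each span a tetrahedron with $w$ (from $v$'s picture) and with $w'$ (from $u$'s picture), and the pseudomanifold link of the triangle structure gives only two choices on each side. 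Hence $w$ is adjacent to $v$, to $u$, and to all common neighbours.

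Iterating this around $v$, I would propagate to the next vertex $u'$ of $c$ and obtain $w\sim u'$. Propagating along any path in $F$ of trivial vertices then shows $w$ is adjacent to all of them. Since $F$ is isometric and contains vertices at distance $3$, a single $w$ adjacent to them all is impossible, so $w$ lies in at most a bounded patch. The contradiction therefore has to come locally. With $w$ adjacent to $v$, $u$ and the two common neighbours $a,b$ of $v,u$ in $F$, the vertex $w$ has the $4$-cycle $v,a,u,b$ in its link. That cycle is induced: $v\nsim u$ fails since $v\sim u$, but $a\nsim b$ holds because $F$ is isometric and $d_F(a,b)=2$. So the link of $w$ must fill this $4$-cycle, which contradicts $6$-largeness of $\Lk(w)\in\{T_{54},T_{72}\}$.

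The main obstacle is the first step: the combinatorial classification of null-homotopic induced $6$-cycles in $T_{54}$ and $T_{72}$ as vertex links. If the Gauss--Bonnet argument does not give this cleanly, I would fall back on a case check using the explicit triangulations and their reflection groups, as in \cite{JW}.
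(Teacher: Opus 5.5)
Your first two steps are sound, but the final contradiction does not hold. The $4$-cycle $v,a,u,b$ in $\Lk(w)$ is \emph{not} induced. As you note yourself, $v\sim u$, so $vu$ is a chord. Since $w\sim v,u,a,b$ and $K$ is flag, the two triangles $vua$ and $vub$ of $F$ both lie in $\Lk(w)$ and fill this $4$-cycle. A $4$-cycle with a diagonal is perfectly compatible with $6$-largeness of $T_{54}$ or $T_{72}$, so the argument as written proves nothing. The preceding ``propagation along paths of trivial vertices'' is also unsupported, since only $u$ is assumed trivial, but it is not needed.

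The contradiction is already available from what you proved. It is the distance-$3$ observation you made and then set aside. Once $w'=w$, the vertex $w$ is adjacent to every $F$-neighbour of $v$, because $\Lk(v)\cap F=\Lk_{\Lk(v)}(w)$. It is also adjacent to every $F$-neighbour of $u$, because $\Lk(u)\cap F=\Lk_{\Lk(u)}(w)$. Let $p$ be the $F$-neighbour of $v$ such that $p,v,u$ lie on a line of $E^2_\Delta$, and let $q$ be the $F$-neighbour of $u$ such that $v,u,q$ lie on that line. Then $d_F(p,q)=3$, while $p,w,q$ is a path of length $2$ in $K$. This contradicts $F$ being an isometric embedding on $1$-skeleta.

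Two further steps need tightening.
\begin{itemize}
\item In step one, lift $c$ to the triangular plane and apply Gauss--Bonnet to the filling disc: $\sum_{x\in c}(3-t(x))=6$, where $t(x)$ is the number of disc triangles at $x$. Inducedness gives $t(x)\ge 2$, hence $t\equiv 2$, which forces a single interior vertex.
\item For $w'=w$, the clean argument is that $\Lk(vu)=\Lk_{\Lk(v)}(u)$ is a $6$-cycle in which $a$ and $b$ are at distance $2$. Both $w$ and $w'$ are common neighbours of $a$ and $b$ in this cycle, and such a common neighbour is unique, so $w'=w$.
\end{itemize}
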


\begin{lemma}[\cite{JW}, Lemma 3.15]
\label{l:Lk(72,nontrivial)}
    Let $v$ be a nontrivial vertex of type $72$ in a flat $F$, then all vertices in $\Lk(v)\cap F$ are also nontrivial in $F$. 
\end{lemma}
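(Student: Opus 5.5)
This lemma is cited from \cite{JW}, so the plan is to reproduce a direct local argument. We would argue by contradiction: let $v$ be a nontrivial vertex of type $72$ in a flat $F$, and suppose some $w\in\Lk(v)\cap F$ is trivial in $F$. The six vertices of the $6$-cycle $c=\Lk(v)\cap F$ alternate in a prescribed way between the types $54$ and $72$. Indeed, each edge $vw$ lies in a $3$-simplex, and the types along a triangle of $F$ are constrained, since every $3$-simplex has exactly two vertices of each type. By Lemma~\ref{l:uniqueness-Lk(72,nontrivial)} we may fix, up to automorphism of $T_{72}$, the embedding of $c$ into $\Lk(v)\cong T_{72}$. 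Hence the configuration around $v$, including which neighbours lie in the same $G(54,54,72,72)$-orbit (the antipodal pairs), is completely determined.

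The first key step is to translate the condition on $w$ into data visible in $\Lk(v)$. We use $\Lk(w)\cap F$, the $6$-cycle around $w$, which passes through $v$ and through the two neighbours $a,b$ of $w$ on $c$. We also use the link of the edge $vw$, which is a cycle in the pseudomanifold (the link of $w$ inside $\Lk(v)\cong T_{72}$). Its position in $T_{72}$ is fixed by the normalized $c$: the vertices $a$ and $b$ sit on it, and the remaining vertices of $\Lk(w)\cap F$ lie in $\Lk(a)$ and $\Lk(b)$. Homotopical triviality of $\Lk(w)\cap F$ in $\Lk(w)$ (which is $T_{54}$ or $T_{72}$) means this $6$-cycle bounds a disc of the $6$-large torus. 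In a $6$-large triangulation, a homotopically trivial induced $6$-cycle bounds a hexagon, that is, it is the link of a single vertex $z$ of $\Lk(w)$.

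The second step, and the main obstacle, is to show that such a $z$ is incompatible with the flat. Then $z$ is adjacent to $w$ and to all six neighbours of $w$ in $F$, in particular to $v,a,b$. Inside $\Lk(v)$ this makes $z$ a common neighbour of $w,a,b$, i.e.\ of three consecutive vertices of $c$. We would show that in $T_{72}$ with the fixed nontrivial $6$-cycle $c$ no such vertex exists. Equivalently, the consecutive triples of $c$ are not contained in a single vertex star; otherwise $c$ could be shortcut, contradicting that $F$ is isometric on $1$-skeleta, or $c$ would be null-homotopic. This is a finite check on the explicit triangulation of Figure~\ref{f:T54_T72}. The symmetry from Lemma~\ref{l:uniqueness-Lk(72,nontrivial)} reduces it to the orbit types of consecutive triples, at most two cases by the antipodal identification.

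If this direct check fails, for instance because the triple does share a neighbour in $T_{72}$, we would use the orbit information instead. The antipodality statement after Lemma~\ref{l:uniqueness-Lk(72,nontrivial)} fixes the types of $a,w,b$. We then compare the type pattern forced on the trivial hexagon around $w$, via Lemma~\ref{l:Lk(trivial)}, with the pattern around $v$, and derive a contradiction on the number of type-$54$ vertices in some $3$-simplex of $F\cup\{z\}$.
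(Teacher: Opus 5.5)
The paper gives no proof of this lemma; it only cites \cite{JW}.

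Your reduction is sound. Since $F$ is isometric on $1$-skeleta, $\Lk(w)\cap F$ is an induced $6$-cycle. If it is null-homotopic in the $6$-large torus $\Lk(w)$, it is the link of a vertex $z$. Then $z$ is a common neighbour, inside $\Lk(v)$, of the three consecutive vertices $a,w,b$ of $c$.

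The gap is the step that carries all the content: that no such $z$ exists in $T_{72}$. You defer it to a ``finite check'' whose outcome you are unsure of, and the reason you give for it is false.
\begin{itemize}
\item A common neighbour $z$ of $a,w,b$ produces no shortcut. The path $a,z,b$ has the same length as $a,w,b$, and replacing $w$ by $z$ gives a $6$-cycle homotopic to $c$, hence still nontrivial.
\item Your reasoning never uses that $v$ has type $72$, and in $T_{54}$ the local configuration does occur. A nontrivial induced $6$-cycle there can lift to three steps in one lattice direction followed by three steps in a direction at $60^\circ$. At the bend, the two cycle-neighbours have a common neighbour in the hexagon.
\end{itemize}
So the type-$72$ hypothesis must enter exactly at this step. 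Your fallback paragraph does not supply it, and its premise is also wrong: the types along $c$ do not alternate but read $54,54,72,54,54,72$.

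The missing idea is that every nontrivial $6$-cycle in $T_{72}$ is straight. There are two ways to see this.
\begin{itemize}
\item Combine Lemma~\ref{l:uniqueness-Lk(72,nontrivial)} with the fact that a closed straight line of the triangulation in Figure~\ref{f:T54_T72} is a nontrivial $6$-cycle; automorphisms preserve straightness.
\item Argue in the universal cover $E^2_\Delta$. Identify its vertices with $\mathbb{Z}[\omega]$, $\omega=e^{2\pi i/3}$; the deck group of $T_{72}$ is then $6\mathbb{Z}[\omega]$. A nontrivial $6$-cycle lifts to a path of length $6$ joining points that differ by a nonzero element of this lattice. Such an element has graph length at least $6$, with equality only for $6e$ with $e$ a unit. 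Projecting onto $e$ shows that every step equals $e$.
\end{itemize}
Hence $a$ and $b$ are opposite vertices of the hexagon $\Lk_{\Lk(v)}(w)$, the link of the edge $vw$. Any common neighbour of $w,a,b$ in the flag complex $\Lk(v)$ would lie on this hexagon and be adjacent there to both $a$ and $b$, which is impossible. With this inserted, your argument is complete.
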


\begin{proposition}
    $K(54,54,72,72)$ is a uniformly locally finite fit systolic complex.
\end{proposition}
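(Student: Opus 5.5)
Uniform local finiteness is immediate: the links are the finite tori $T_{54}$ and $T_{72}$, so every vertex has degree at most $\max(|V(T_{54})|,|V(T_{72})|)$. Systolicity is the theorem of Januszkiewicz--\'Swi{\c a}tkowski \cite[Theorem 6.1]{JaSw} for developments of locally $6$-large simplices of groups. The substance is fitness. I would prove it via Proposition~\ref{prop:equivalent_definition}(3): show that $K=K(54,54,72,72)$ is $M$-harmless for some $M$, i.e.\ that it contains no isometrically embedded $m$-arrowheads with $m$ large.

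Suppose $Y\subseteq K$ is an isometrically embedded $m$-arrowhead with axis tetrahedra $\sigma_i=\sigma(v_i,a_i,b_i,c_i)$ and $\sigma_i'=\sigma(v_{i-1},a_i,b_i,c_i)$. I would read off what this forces in links. The link of an axis vertex $v_i$ (for $i$ in the middle range) contains a doubled asteroid, by Figure~\ref{f:triplane}. Moreover, $Y$ is the union of three zipped half-plane pieces meeting along $\alpha,\beta,\gamma$. Any two of these pieces, glued along the axis, form a large isometric piece of a flat through $v_i$. Each pair of the three sheets thus gives a flat (or a large flat disc, which suffices since all the lemmas are local) containing $v_i$. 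In $\Lk(v_i)$ these three flats give three $6$-cycles, and pairwise they share paths. Their union is the doubled asteroid together with its completions to hexagons.

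The key step is then a local contradiction in the torus links. A doubled asteroid is three paths of length two joining two triangles, and in $\Lk(v_i)$ the three $6$-cycles pass through these triangles and paths. The approach is to case-split on the type of $v_i$ and on whether $v_i$ is trivial or nontrivial in each of the three flats.
\begin{itemize}
\item If $v_i$ has type $72$ and is nontrivial in some flat, Lemma~\ref{l:uniqueness-Lk(72,nontrivial)} pins the cycle down up to automorphism. I would then check directly in $T_{72}$ that no doubled-asteroid configuration extends it. Lemma~\ref{l:Lk(72,nontrivial)} propagates nontriviality to the neighbouring axis vertices.
\item If $v_i$ is trivial in a flat, Lemma~\ref{l:Lk(trivial)} forces its flat-neighbours, which include $v_{i\pm1}$ or the $a_i,b_i,c_i$, to be nontrivial.
\end{itemize}
The propagation along the axis, together with the fact that each tetrahedron has exactly two vertices of each type, should force some axis vertex of type $72$ to be nontrivial in one of the flats. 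That puts us in the first case, which should produce the contradiction for $m$ beyond a small bound (say $m\ge 5$, leaving room for the boundary vertices of the arrowhead).

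The main obstacle is this last step: carrying out the finite check in $T_{54}$ and $T_{72}$ that three homotopically constrained $6$-cycles cannot form a doubled asteroid configuration, and handling type $54$ axis vertices, where no uniqueness lemma is available. To get around the latter, I would first try to use the alternation of types along the axis so that only type-$72$ vertices need to be examined. Failing that, I would fall back on the stronger condition of Corollary~\ref{c:fit_condition}: verify by inspecting Figure~\ref{f:T54_T72} that neither torus contains an induced doubled asteroid at all. That check is finite, and if it succeeds it gives $(1,4)$-quasi-isometric level sets directly.
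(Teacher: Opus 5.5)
There is a genuine gap. The step that is supposed to produce the contradiction is only announced (``should force'', ``should produce''), and the routes you propose for it aim at the wrong vertices.

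First, the axis vertices $v_i$ do not alternate in type. Since $\sigma_i$ and $\sigma_i'$ are strict fundamental domains sharing the triangle $a_ib_ic_i$, the vertices $v_{i-1}$ and $v_i$ lie in the same $G(54,54,72,72)$-orbit, so all $v_i$ have one common type. If that type is $54$, every doubled asteroid you examine sits in a copy of $T_{54}$, and your analysis at $v_i$ has nothing to work with.

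Your fallback does not rescue this, because $T_{54}$ does contain induced doubled asteroids. By Remark~\ref{r:3psmfld}(3), $K(54,54,54,54)$ is not fit. By Proposition~\ref{prop:equivalent_definition} it therefore contains arbitrarily large isometrically embedded arrowheads. The link of an interior axis vertex (a copy of $T_{54}$) then contains the induced doubled asteroid formed by $a_ib_ic_i$, $a_{i+1}b_{i+1}c_{i+1}$ and three $2$-paths. So the hypothesis of Corollary~\ref{c:fit_condition} also fails for $K(54,54,72,72)$, and that corollary is unavailable.

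The missing idea is to work at the vertices of the shared triangle $a_1b_1c_1$ rather than at $v_i$, using the orbit fact above. First pass, via cocompactness and K\"onig's lemma, to a full triplane with flats $F_0\ni a_1,b_1$, $F_1\ni b_1,c_1$, $F_2\ni a_1,c_1$, so that the cited lemmas apply to genuine flats rather than ``large flat discs''. Then argue in three steps.
\begin{enumerate}
\item In the hexagon $\Lk(a_1)\cap F_0$, the vertices $v_0$ and $v_1$ are at distance $2$ (both are adjacent to $b_1$), so they are not antipodal, yet they lie in the same orbit. By Lemma~\ref{l:uniqueness-Lk(72,nontrivial)}, $a_1$ cannot be a nontrivial type-$72$ vertex of $F_0$; the same holds for $F_2$.
\item Suppose $a_1$ has type $54$ and is trivial in $F_0$. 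Lemma~\ref{l:Lk(trivial)} makes $v_1$ and $b_1$ nontrivial, and by the $2+2$ type count in $\sigma_1$ one of them has type $72$. Lemma~\ref{l:Lk(72,nontrivial)} then makes $a_1$ nontrivial, a contradiction.
\item Suppose $a_1$ has type $72$. By step 1 it is trivial in both $F_0$ and $F_2$, so $v_0,v_1,b_1,c_1$ are nontrivial. By Lemma~\ref{l:Lk(72,nontrivial)} none of them can have type $72$, so all have type $54$. This contradicts the $2+2$ count.
\end{enumerate}
Hence $a_1$, and symmetrically $b_1$ and $c_1$, all have type $54$, which again contradicts the $2+2$ count. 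Your plan never uses that $v_{i-1}$ and $v_i$ lie in one orbit, and without that fact the lemmas you list do not close the argument.
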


\begin{proof} 

By Proposition \ref{prop:equivalent_definition}, if $K(54,54,72,72)$ was not fit, then it would contain arbitrarily big arrowheads.
Since the group $G(54,54,72,72)$ acts cocompactly on $K(54,54,72,72)$, by König's lemma, $K(54,54,72,72)$
would contain an isometric triplane whose axis is a sequence of $3$-simplices (see Figure \ref{f:triplane}, left).
Then there would be three distinct flats $F_0$, $F_1$, and $F_2$, where any two of them span the entire triplane. Let $\sigma_1=\sigma(v_1,a_1,b_1,c_1)$ and $\sigma'_1=\sigma(v_0,a_1,b_1,c_1)$ be two 3-simplices on the axis that share a triangle, where $a_1\in F_0\cap F_2$, $b_1\in F_0\cap F_1$, and $c_1\in F_1\cap F_2$, see Figure \ref{f:part of a triplane}.

\begin{figure}[h!]
    \centering
    \includegraphics[width=0.5\linewidth]{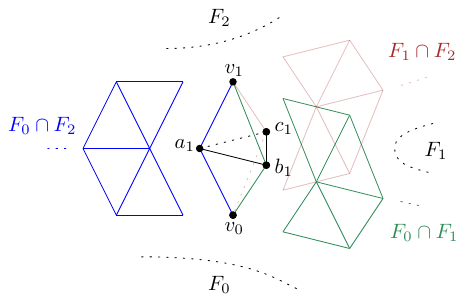}
    \caption{Part of a triplane}
    \label{f:part of a triplane}
\end{figure}

First, observe that $a_1$ cannot be  a nontrivial vertex of type $72$ in $F_0$. Because if it were, by Lemma \ref{l:uniqueness-Lk(72,nontrivial)}, $v_0$ and $v_1$ would lie in different orbits, as they are not antipodal in $\Lk(a_1)\cap F_0$. However, since $\sigma_1$ and $\sigma_1'$ are both strict fundamental domains, $v_0$ and $v_1$ have to be in the same orbit, a contradiction. Second, $a_1$ cannot be a trivial vertex of type $54$ in $F_0$, either. Assume $a_1$ was, then by Lemma \ref{l:Lk(trivial)}, $v_1$ and $b_1$ will be nontrivial in $F_0$, and one of them has to be of type $72$. However, Lemma \ref{l:Lk(72,nontrivial)} then implies $a_1$ is nontrivial in $F_0$, contradicting our assumption.

Therefore, if $a_1$ is of type $72$, then by the previous arguments, $a_1$ has to be trivial in both $F_0$ and $F_2$, which implies, by Lemma \ref{l:Lk(trivial)} and \ref{l:Lk(72,nontrivial)}, that $v_0$, $v_1$, $b_1$, $c_1$ are all of type $54$, since they are adjacent to $a_1$ in either $F_0$ or $F_2$. This is, however, impossible, as $\sigma_1$ should have only two vertices of type $54$. 
Hence, $a_1$ must be of type $54$. Similarly, $b_1$ and $c_1$ also need to be of type $54$, but this again leads to a contradiction. Therefore, such a triplane cannot exist, and $K(54,54,72,72)$ is fit. 
\end{proof}

\begin{remark}
    \label{r:3psmfld}
    In \cite{JW} the author  considers simplices of groups of the form $\mathcal{G}(x,y,w,z)$, with $x,y,w,z\in \{ 2n^2,6n^2  | n\in  \mathbb{N}\}$ ($2n^2$ and $6n^2$ are possible numbers of triangles in triangulations of tori analogous to these in Figure~\ref{f:T54_T72}.) As shown there in some cases their fundamental groups $G(x,y,w,z)$ are hyperbolic, or relatively hyperbolic, or not fit. More precisely, it has been shown that:
   \begin{enumerate}
       \item \label{it:1}
       Flats in $K(x,y,w,z)$ exist only when $x=y=z=w=72$ or $x=y=54$ and $z,w\geq 54$ \cite[Twierdzenie 3.17]{JW};
       \item \label{it:2}$K(72,72,72,72)$ and $K(54,54,w,z)$, for $w,z> 72$ have the \emph{Isolated Flats Property} \cite[Definition 5.1]{Elsner_IF} and \cite[Definicja 4.1]{JW} and \cite[Wniosek 4.5 and 4.7]{JW};
       \item \label{it:3} It follows from \cite[Przykład 3.10]{JW} that $K(54,54,54,54)$ is not fit.
   \end{enumerate}
    Groups $G(x,y,w,z)$ for $K(x,y,w,z)$ different from the ones in (\ref{it:1}) are hyperbolic. Therefore, for them exactness follows from \cite{Ada1994}. 
    From (\ref{it:2}), the groups $G(72,72,72,72)$ and $G(54,54,w,z)$, for $w,z> 72$ are hyperbolic relative to their maximal abelian subgroups. Therefore, for them exactness follows from \cite{Oza2006}.
   
   The group $G(54,54,72,72)$ is not relatively hyperbolic with respect to its maximal abelian subgroups \cite[Wniosek 4.9]{JW}.

   Let us note here that all the general constructions of high-dimensional systolic groups from \cite{JS2003,Hag2003,JaSw,O-chcg} result in hyperbolic groups.
\end{remark}

\section{Final remarks and further questions}\label{sec:final} 

In this article we apply a characterization of Property A by \v{S}pakula and Wright (Proposition~\ref{prop:criterion}) to systolic complexes. Following the approach from \cite{SpWr}, we need to restrict to fit systolic complexes (Definition~\ref{def:fit}) in order to have intervals for which condition (\ref{item:3}) in Proposition~\ref{prop:criterion} is clearly satisfied. As can be easily shown, intervals in general uniformly locally finite (or even, acted geometrically upon a group) systolic complexes can be very large---the volume of level sets may grow exponentially. Therefore, the use of \v{S}pakula and Wright's characterization presented here does not work. Nevertheless, we believe that the answers to the following questions are affirmative.

\begin{question}
    Are uniformly locally finite systolic complexes coarsely embeddable into a Hilbert space? Do they have Yu's Property A?
\end{question}

In this article we provided several classes of groups acting properly on uniformly locally  finite fit systolic complexes---see Sections~\ref{sec:Artinfit}, ~\ref{sec:c3t6},~\ref{sec:c6} \&~\ref{sec:54547272}, and Remark~\ref{rem:2dim}. We are also aware of other examples of groups and spaces coarsely embeddable into such fit systolic complexes. Therefore, we believe that the following problem is important.

\begin{problem}
    Find more groups and spaces coarsely embeddable into uniformly locally finite fit systolic complexes.
\end{problem}

For such groups or spaces Property A (and hence e.g.\ the coarse Baum-Connes conjecture) would follow immediately from the Main Theorem.

In another direction, in this article we show Property A for  $C(3)$--$T(6)$ (Corollary B) and $C(6)$ (Corollary C) small cancellation complexes, whose geometry is tightly related to systolicity.
The remaining $C(4)$--$T(4)$ case is very different.
In that case N. Hoda \cite{Hoda2020} showed that another notion of combinatorial nonpositive curvature---\emph{quadricity} or, in graph terms, \emph{hereditary modularity}---plays an important role. Although there are many analogies between systolic and quadric complexes, at least in the small cancellation case they behave differently: it is easy to produce examples of classical $C(4)$--$T(4)$ complexes---in fact, CAT(0) square complexes being products of two trees---whose quadrization has the property that the size of intervals grows exponentially with their length. To give an illustration, the quadrization of the product of the path of length $k$ with a star consisting of $n$ paths of length $k$ with a common end is the union of $n$ copies of the triangular half of the square $k\times k$ grid glued together along the common diagonal of the grid. The interval in the quadrization between the endpoints of this diagonal is the whole graph, thus it has $O(nk^2)$ vertices. (Notice that in the case  $n=3$, this example can be obtained from the example of Fig. \ref{f:not-fit-C6} of a graphical C(6) complex by contracting all parallel edges forming the smallest positive angle with the positive direction of the abscissa axis). Now, if we will consider the quadrization of the product of a path $P$ of length $k$ with the perfect binary tree $T$ of height $k$, then it has $O(k2^k)$ vertices and they all belong to the interval between the vertices corresponding to the product of the ends of $P$ with the root of $T$. (Notice that the perfect binary tree may also occur in the level sets of intervals of systolic complexes of dimension $3$). Therefore, the techniques developed in this article cannot directly be used in that case.

\begin{question}
    Are uniformly locally finite simply connected $C(4)$--$T(4)$ small cancellation complexes coarsely embeddable into a Hilbert space? Do they have Yu's Property A? Do  uniformly locally finite simply connected graphical C(6) complexes have Property A? 
\end{question}

Finally, it is very interesting whether our proof of boundary amenability could be used to obtain other results, for example in the context of measure equivalence, as e.g.\ in \cite{HoHu2021}.

\subsection*{Acknowledgements} 
We thank Jingyin Huang, Graham Niblo and Piotr Nowak for useful comments.
 The work presented here was partially supported by the Carlsberg Foundation, grant CF23-1226. V.C. was also partially supported by the ANR project MIMETIQUE (ANR-25-CE48-4089-01). D.O.\ was partially supported by (Polish) Narodowe Centrum Nauki, UMO-2018/31/G/ST1/02681.



\bibliographystyle{amsalpha}
\bibliography{mybib}

\end{document}